\documentclass[12pt,reqno]{amsart}


\allowdisplaybreaks

\textwidth15.6cm
\textheight22.8cm
\hoffset-2truecm
\voffset-.5truecm

\usepackage{amsfonts}
\usepackage{amsmath}
\usepackage{mathtools}
\usepackage{amssymb}
\usepackage{hyperref,thmtools}
\usepackage{cleveref}
\usepackage{amsthm}
\usepackage{verbatim}
\usepackage{ulem}
\usepackage{cancel}
\usepackage{tikz, tikz-cd}
\usepackage{ytableau}
\usepackage{quiver}

\theoremstyle{plain}
\newtheorem{theorem}{Theorem}[section]
\newtheorem{proposition}[theorem]{Proposition}
\newtheorem{lemma}[theorem]{Lemma}
\newtheorem{corollary}[theorem]{Corollary}

\theoremstyle{definition}

\theoremstyle{remark}
\newtheorem{remark}[theorem]{Remark}
\theoremstyle{example}
\newtheorem{example}[theorem]{Example}

\numberwithin{equation}{section}

\usepackage[]{cite}

\newcommand\CC{\mathbb{C}}

\newcommand\ZZ{\mathbb{Z}}

\newcommand\one{\mathbf{1}}

\newcommand\Inv{\mathrm{Inv}}

\newcommand\leg{\mathrm{leg}}

\newcommand\GT{\mathrm{GT}}

\newcommand{\Leg}{\mathrm{Leg}}

\keywords{Hall-Littlewood polynomials, Hecke Algebras, Symmetric Functions}
\subjclass{Primary 05E05; Secondary 33D52}

\title{The monomial expansion formula for \\ Hall-Littlewood $P$-polynomials}
\author{Aritra Bhattacharya}
\email{matharitra@gmail.com}
\date{\today}

\begin{document}
	
	\maketitle

\begin{abstract}
	We give a Hecke algebra derivation of Macdonald’s expansion formula for Hall-Littlewood polynomials
	in terms of semistandard Young tableaux.  This is accomplished by first obtaining a Hecke algebra lift of the
	expansion coefficients and then proving a generalization of Klostermann’s recursions.
\end{abstract}

\section{Introduction}


\subsection{}The Hall-Littlewood polynomials are a simultaneous generalization of the Schur polynomials, which are characters of the irreducible rational representations of $GL_n(\CC)$ and the monomial symmetric polynomials, the naive basis of the space of symmetric polynomials, obtained from a monomial by symmetrizing over the symmetric group $S_n$. In fact, when the space of symmetric polynomials is seen as the spherical Hecke algebra through Satake isomorphism, the Hall-Littlewood polynomials become the naive basis of this space, where one obtains these polynomials by Hecke symmetrizing a monomial. In this article, we obtain another proof of the monomial expansion formula for Hall-Littlewood polynomials due to Macdonald, by use of an affine Hecke algebra lift. 


\subsection{}The Hall-Littlewood $P$-polynomials is a family $(P_\lambda(t) : \lambda \in (\ZZ^n)_+)$ of polynomials that form a graded $\ZZ[t^{\pm1}]$-basis of the space of symmetric polynomials in $n$ variables $\ZZ[t^{\pm 1}][X_1^{\pm1},\ldots,X_n^{\pm 1}]^{S_n}$, which is the $t$-analog of the character ring of rational representations of $GL_n(\CC)$. Two important specializations are the Schur polynomials $s_\lambda$, and the monomial symmetric polynomials $m_\lambda$.  



\[\begin{tikzcd}
	{s_\lambda} && {P_\lambda(t)} && {m_\lambda}
	\arrow["{t=0}"{description}, from=1-3, to=1-1]
	\arrow["{t=1}"{description}, from=1-3, to=1-5]
\end{tikzcd}\]

The transition matrices between these three bases are well-studied. The transition matrix coefficients between the Schur polynomials and the monomial symmetric polynomials are given by the Kostka numbers, which counts the number of semistandard Young tableaux of a given shape and content. The transition matrix coefficients between the Schur polynomials and the Hall-Littlewood polynomials are given by the Kostka-Foulkes polynomials, which counts the semistandard tableaux with a statistic called charge. Both the Kostka numbers and the Kostka-Foulkes polynomials appear in ubiquity in combinatorial, representation theoretic and geometric contexts. For a survey of Hall-Littlewood and Kostka-Foulkes polynomials see \cite{MacMainBook}, \cite{NelsenRam}, \cite{DLTsurvery94}.

\subsection{}This paper focuses on the transition matrix between the Hall-Littlewood polynomials and the monomial symmetric polynomials. In \cite{MacMainBook}, Macdonald obtained the monomial expansion formula for $P_\lambda(t)$ as a sum over semistandard Young tableaux of shape $\lambda$, and the weight associated to each tableau $T$ is a polynomial $\psi_T \in \ZZ[t]$. Macdonald's formula reads \begin{equation}\label{Macformula}
P_\lambda(t) = \sum_{T \in 
	B(\lambda)} \psi_T x^T \,, \qquad \hbox{ for } \lambda \in (\ZZ_{\geq 0}^n)_+\,,
\end{equation} 
where $B(\lambda)$ denotes the set of semistandard Young tableaux of shape $\lambda$ with fillings from $[n]$. 
The definition of semistandard Young tableaux and Macdonald's $\psi_T$ is recalled in \S\ref{tableauxdef}, \S\ref{sec:Macformula}.

In fact, the Hall-Littlewood polynomials $P_\lambda(t)$ may be defined more generally as an element of $\ZZ[t][P]^W$, where $P$ is the weight lattice and $W$ the Weyl group of a given root system or a root datum of a reductive linear algebraic group, and $\lambda$ varies over $P_+$, the set of dominant integral weights. The $P_\lambda(t)$ discussed in previous paragraphs are the type $GL_n$ Hall-Littlewood polynomials.

In \cite{GaussentLittelmann_oneskeletongalleries}, Gaussent and Littelmann obtained a monomial expansion formula for the Hall-Littlewood polynomials as a sum over positively folded one-skeleton galleries in the standard apartment of the affine building. Their formula works for all types. Klostermann \cite{Klostermann} showed that for type $GL_n$ the Gaussent-Littelmann formula and Macdonald's formula agrees term-by-term via a bijection between the positively folded one-skeleton galleries and the semistandard Young tableaux.

The Gaussent-Littelmann formula has as its predecessor the formula by Schwer in \cite{Schwer}. Schwer's formula expresses the monomial coefficients in $P_\lambda(t)$ as a sum of positively folded alcove galleries in the standard apartment of the affine building. An exposition of Schwer's formula can be found in \cite{Ram_alcovewalksHeckealgebras}, where positively folded galleries are replaced with positively folded alcove walks. In \cite{RamYip}, the alcove walk formula were generalized to give a monomial expansion formula for the Macdonald polynomials, which are generalizations of Hall-Littlewood polynomials. Schwer's approach to calculating the transition matrix coefficients between $(P_\lambda(t))_\lambda$ and $(m_\lambda)_\lambda$ is through an affine Hecke algebra lift, which we now describe. 

\subsection{}Throughout the paper our exposition will be in type $GL_n$ for some fixed $n \in \ZZ_{>0}$, although some (but not all) of the statements make sense in general type.

Let $H$ be the affine Hecke algebra (defined in \S\ref{ahadef}). The set $\{ X^{\mu} T_w \,|\, \mu \in \ZZ^n, w \in S_n \}$ is a $\ZZ[t^{\pm 1}]$-basis for $H$. Define the finite Hecke symmetrizer $$ \one_0 = \sum_{w \in S_n} T_w \,.$$ Then the Satake isomorphism theorem says that $$ \one_0 H \one_0 \cong \ZZ[t^{\pm 1}][X_1^{\pm1},\ldots,X_n^{\pm 1}]^{S_n}\,, $$ and the Hall-Littlewood polynomials $P_\lambda(t)$ are defined as the image of the element $\dfrac{1}{W_\lambda(t)} \one_0 X^\lambda \one_0$, where $W_\lambda(t)$ is a polynomial in $t$, such that the coefficient of $X^\lambda$ in $P_\lambda(t)$ is $1$. 

Schwer first expresses the element $\one_0 X^\lambda$ of $H$ in the basis $\{ X^{\mu} T_w \,|\, \mu \in \ZZ^n, w \in S_n \}$ of $H$. The formula for $P_\lambda(t)$ is then obtained by right multiplication with $\one_0$ and using $T_w \one_0 = t^{\ell(w)}\one_0$ for $w \in S_n$.

Gaussent and Littelmann in \cite{GaussentLittelmann_oneskeletongalleries} obtained another formula for the monomial expansions of $P_\lambda(t)$, using a `geometric compression' to Schwer's formula. Klostermann \cite{Klostermann} showed that this formula agrees with Macdonald's formula \eqref{Macformula}. Klostermann's proof uses the fact that for a tableau $T$ with columns $C_{r},\ldots,C_1$ (from left-to-right), $$ \psi_T = \prod_{j = 1}^{r-1} \psi_{C_{j+1}\otimes C_j} \,, \qquad \hbox{( see \eqref{eq:psiTproduct} for a proof )}\,,$$ where $C_{j+1} \otimes C_j$ denotes the two column tableau with columns $C_{j+1}$ and $C_j$. For two column tableaux, $\psi$ satisfies the recursions in \autoref{prop:klostermannrecs}. These recursions look exactly like the recursions for $R$-polynomials in Kazhdan-Lusztig theory, see for example Theorem 5.1.1 in \cite{BjornerBrenti} and compare with \eqref{eq:tildepsirec2cols}.


\subsection{}In this article we follow Schwer and Ram's idea to compute the monomial expansion of $P_\lambda(t)$, we first write $\one_0 X^\lambda$ in the basis $\{ X^{\mu} T_w \,|\, \mu \in \ZZ^n, w \in S_n \}$ of $H$. 

Let $\varpi_k$ be the $k$th fundamental weight (see \eqref{varpikdef} for the notation). For $\mu \in \ZZ^n$ let $S_{n,\mu} = \{w \in S_n | w\mu = \mu \}$.

Let $T \in B(\varpi_{\ell_r})\otimes \ldots \otimes B(\varpi_{\ell_1})$ with $1 \leq \ell_1 \leq 
\ldots \leq \ell_r \leq n$. We define an element $\Psi_T$ in the finite Hecke algebra $H_n$ by a recursive definition. 

For a column $C \in B(\varpi_\ell)$, define \begin{equation}
	\Psi_C = t^{\ell(u_C)}(T_{u_C^{-1}})^{-1} \one_{\varpi_\ell},
\end{equation}
where $\one_{\varpi_{\ell}}$ is the parabolic Hecke symmetrizer $\sum\limits_{w \in S_{n,\varpi_{\ell}}} T_w$.

Suppose we have obtained $\Psi_{C_{k}\otimes\ldots\otimes C_1}$ where $C_i \in B(\varpi_{\ell_i})$. Then we write $$ \Psi_{C_{k}\otimes\ldots\otimes C_1} = \sum_{C \in B(\varpi_{\ell_{k+1}})} T_{u_C} h_{C,C_{k},\ldots,C_1}\,, \qquad \hbox{ with } \qquad h_{C,C_k,\ldots,C_1} \in H_{n,\varpi_{\ell_{k+1}}} \,,$$ where for $\mu \in \ZZ^n$, $H_{n,\mu}$ is the subalgebra of $H$ generated by $\{ T_w \,|\, w \in S_{n,\mu} \}$, and $u_C$s are the minimal length coset representatives of $S_n/S_{n,\varpi_{\ell_{k+1}}}$. Then define \begin{equation}
	\Psi_{C_{k+1}\otimes\ldots\otimes C_1} = t^{\ell(u_{C_{k+1}})}(T_{u_{C_{k+1}}^{-1}})^{-1}h_{C_{k+1},\ldots,C_1}\,.
\end{equation}

Let $H_n$ be the finite Hecke algebra inside the affine Hecke algebra $H$. For a column strict filling $T$ of partition shape the above paragraph defines an element $\Psi_T \in H_n$.


A tableau is called highest weight if all entries in the $i$th row is equal to $i$. We show in \autoref{PsiT0ifnotSSYT} and \autoref{Psiinitialcondition} that \begin{theorem}
	Let $\lambda = \varpi_{\ell_r}+ \ldots + \varpi_{\ell_1}$ with $1 \leq \ell_1 \leq 
	\ldots \leq \ell_r \leq n$. Let $T \in B(\varpi_{\ell_r})\otimes \ldots \otimes B(\varpi_{\ell_1})$ . Then \begin{enumerate}
		\item[(a)] If $T$ is not semistandard then $\Psi_T = 0$.
		\item[(b)] If $T$ is highest weight then $\Psi_T = \one_\lambda$. 
	\end{enumerate} 
\end{theorem}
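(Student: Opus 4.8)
The plan is to induct on the number of columns $r$. Both statements concern the element obtained after prepending the leftmost column, and the recursive definition shows that prepending $C_{k+1}$ to a $k$-column tableau $D = C_k \otimes \cdots \otimes C_1$ feels $\Psi_D$ only through the single coefficient $h_{C_{k+1},D}$, namely the component of $\Psi_D$ along $T_{u_{C_{k+1}}}$ in the expansion with respect to $S_{n,\varpi_{\ell_{k+1}}}$. So the whole problem reduces to controlling, for semistandard $D$, exactly which columns $C$ occur with nonzero coefficient $h_{C,D}$, and to evaluating $h_{C,D}$ in the highest weight case. The base case $r=1$ is immediate: a single column is always semistandard, so (a) is vacuous, and if $C_1$ is the straight column $1,2,\ldots,\ell_1$ then $u_{C_1}=e$, whence $\Psi_{C_1}=\one_{\varpi_{\ell_1}}=\one_\lambda$.

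For (b) I would argue as follows. Assume $\Psi_D=\one_\mu$ for the $k$-column highest weight tableau, where $\mu=\varpi_{\ell_k}+\cdots+\varpi_{\ell_1}$, and prepend the straight column $C_{k+1}$, for which $u_{C_{k+1}}=e$; then $\Psi_{C_{k+1}\otimes D}=h_{C_{k+1},D}$ is literally the $T_e$-component of $\one_\mu$ relative to $W':=S_{n,\varpi_{\ell_{k+1}}}$. Writing each $w\in S_{n,\mu}$ uniquely as $w=u_C v$ with $v\in W'$ and length-additively $T_w=T_{u_C}T_v$, the $T_e$-component collects exactly the $w\in S_{n,\mu}\cap W'$, giving $h_{C_{k+1},D}=\sum_{w\in S_{n,\mu}\cap W'}T_w$. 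It then suffices to prove the stabilizer identity $S_{n,\lambda}=S_{n,\mu}\cap S_{n,\varpi_{\ell_{k+1}}}$; since $\mu$ and $\varpi_{\ell_{k+1}}$ are dominant, a simple reflection fixes $\lambda=\mu+\varpi_{\ell_{k+1}}$ iff it fixes both summands, and all three stabilizers are parabolic, hence generated by the simple reflections they contain, so the identity follows. Therefore $h_{C_{k+1},D}=\one_\lambda$ and $\Psi_{C_{k+1}\otimes D}=\one_\lambda$.

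For (a) I would run the same induction and split according to why $T=C_{k+1}\otimes D$ fails to be semistandard. If $D$ itself is not semistandard, then $\Psi_D=0$ by the inductive hypothesis, so every coefficient $h_{C,D}$ vanishes and in particular $\Psi_T=0$. The remaining case is that $D$ is semistandard but the two leftmost columns $C_{k+1}\otimes C_k$ already violate semistandardness, and here I must show the single coefficient $h_{C_{k+1},D}$ vanishes. I would isolate this as a \emph{left-support lemma} proved jointly with (a): for any tableau $D$ with leftmost column $C'$ and any $\ell\ge\ell'$, the expansion $\Psi_D=\sum_{C}T_{u_C}h_{C,D}$ with respect to $S_{n,\varpi_\ell}$ has $h_{C,D}=0$ unless $C\otimes C'$ is semistandard. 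Granting the lemma, the bad case of (a) is immediate; and the lemma's own inductive step uses the recursion $\Psi_D=t^{\ell(u_{C'})}(T_{u_{C'}^{-1}})^{-1}h_{C',D''}$ (with $D''=C_{k-1}\otimes\cdots\otimes C_1$) to reduce the left support of $\Psi_D$ to the Bruhat support of $(T_{u_{C'}^{-1}})^{-1}$ times an element of $H_{n,\varpi_{\ell'}}$, after re-expanding in the larger parabolic.

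The main obstacle is exactly this support control in the lemma. Concretely one must show that expanding $t^{\ell(u_{C'})}(T_{u_{C'}^{-1}})^{-1}\one_{\varpi_{\ell'}}$, and its descendants after right multiplication by elements of $H_{n,\varpi_{\ell'}}$, in the basis $\{T_{u_C}\}_{C\in B(\varpi_\ell)}$ over $H_{n,\varpi_\ell}$ produces only columns $C$ entrywise dominated by $C'$ in the sense $a_i\le b_i$ that characterizes semistandardness of $C\otimes C'$. This is precisely where the combinatorics of minimal coset representatives meets the Hecke relations: the inverse $(T_{u_{C'}^{-1}})^{-1}$ is supported on the lower Bruhat interval below $u_{C'}^{-1}$, and I expect the cleanest route is to match the surviving lower terms, after re-expansion, with the vanishing built into the two-column recursions of \autoref{prop:klostermannrecs}, whose $R$-polynomial shape governs exactly which terms persist. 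Everything else in the argument is the bookkeeping of length-additive parabolic factorizations.
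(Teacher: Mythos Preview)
Your overall architecture is the paper's: induct on the number of columns, and for (a) isolate a ``left-support lemma'' (the paper's \autoref{semistandardness2}) saying that in the parabolic expansion of $\Psi_D$ only columns $C$ with $C\otimes C'$ semistandard survive, where $C'$ is the leftmost column of $D$. Your proof of (b) is correct and is a slightly slicker repackaging of the paper's argument: the paper writes out $\one_\mu$ explicitly as a product of interval symmetrizers and commutes them past the $T_{u_F}$, whereas you observe directly that the $T_e$-coefficient of $\one_\mu$ relative to $S_{n,\varpi_{\ell_{k+1}}}$ is $\sum_{w\in S_{n,\mu}\cap S_{n,\varpi_{\ell_{k+1}}}}T_w=\one_\lambda$.

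The gap is in (a), precisely at your ``main obstacle''. You correctly note that $(T_{u_{C'}^{-1}})^{-1}$ is supported on $T_x$ with $x\le u_{C'}$ in Bruhat order, and that one must then show the resulting $u_F$'s satisfy $f_i\le c'_i$. But your proposed route---matching this against the two-column recursions of \autoref{prop:klostermannrecs}---is not workable: those recursions are for the scalars $\psi_{F\otimes E}$, not for the Hecke elements, and in the paper's logical order they are used only to \emph{identify} $\widetilde\psi$ with $\psi$ \emph{after} (a) is established; invoking them here would be circular. What actually closes the gap is purely combinatorial: the \emph{tableau criterion} for Bruhat order (\autoref{bruhattableaucondition}). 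For $h\in H_{n,\varpi_{\ell'}}$ one reduces to $h=(T_{v^{-1}})^{-1}$ with $v\in S_{n,\varpi_{\ell'}}$, so that $(T_{u_{C'}^{-1}})^{-1}h=(T_{(u_{C'}v)^{-1}})^{-1}$ is supported on $x\le u_{C'}v$; the tableau criterion then gives $x[\ell']_\le \le (u_{C'}v)[\ell']_\le = C'$, and since $u_F[\ell']\subseteq u_F[\ell]=F$ one gets $f_i\le (u_F[\ell']_\le)_i\le c'_i$. This is the missing ingredient; once you have it, the rest of your inductive bookkeeping is exactly the paper's.
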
 


Our first main result is \autoref{10Xlambda}, which says that $$ \one_0 X^\lambda = \sum_{T \in B(\lambda)} X^T \Psi_T \,,$$ this is an affine Hecke algebra lift of Macdonald's formula \eqref{Macformula}. 

%

We then describe the elements $\Psi_T$ by a recursion in \autoref{Psirecrcols}. Using this, we show that if $$\widetilde{\psi}_T = \dfrac{1}{W_\lambda(t)}\Psi_T\one_0\,, \qquad \hbox{ for } T \in B(\lambda) \,,$$ then $\widetilde{\psi}_T \in \ZZ[t^{\pm1}]$ and $$\widetilde{\psi}_T = \psi_T\,, \qquad \hbox{ for } T\in B(\lambda)\,,$$ i.e, our $\widetilde{\psi}$ agrees with Macdonald's $\psi$ (\autoref{tildepsi=psi}). To show this, we first show that if $T$ has columns $C_r,\ldots,C_1$ (from left to right) then $$ \widetilde{\psi}_T = \prod_{j = 1}^{r-1}\widetilde{\psi}_{C_{j+1}\otimes C_{j}}\,, \qquad (\hbox{\eqref{eq:tildepsiTproduct}})  \,.$$ Then we find a recursion for $\widetilde{\psi}_{F\otimes E}$ in \eqref{eq:tildepsirec2cols} which agrees with the recursion for $\psi_{F\otimes E}$ from \autoref{prop:klostermannrecs}. Then our $\widetilde{\psi}_T$ and Macdonald's $\psi_T$ satisfies the same recurrence and intital conditions, this proves their equality.

\subsection{}Finally, we remark that just like the methods used in Schwer's formula were generalized to obtain the alcove walk formula for Macdonald polynomials, we hope that our methods may in future generalize to give a compressed alcove walk formula for Macdonald polynomials.

\subsection*{Acknowledgements}

We are very grateful to Arun Ram for suggesting this project and providing his initial computations, valuable insights and comments. We thank S. Viswanath for helpful discussions.

\section{The Symmetric Group}

In this section we review some well-known facts about the symmetric group seen from a Coxeter theoretic point of view. We will use results from this section to deduce identities in the affine Hecke algebra.

Fix $n \in \ZZ_{>0}$.

\subsection{Symmetric Group}
For two integers $1 \leq a \leq b \leq n$, let $[a,b]$ denote the interval $\{a,a+1,\ldots,b\}$, and $[n] = [1,n]$. Let $S_{[a,b]}$ denote the symmetric group on $[a,b]$, i.e, $S_{[a,b]}$ is the set of all bijections $[a,b] \to [a,b]$, and $S_n = S_{[1,n]}$. Then $S_{[a,b]} \subseteq S_{[1,n]} = S_n$. 

Let $s_i$ denote the simple transposition $(i,i+1)$ in $S_n$. Then the set $s_1,\ldots,s_{n-1}$ form a presentation of the group $S_n$ with the following relations.
\begin{align}
	&s_i^2 = 1\,, \qquad \hbox{ for } i \in [n-1]\,,
	\\
	&s_is_{i+1}s_i = s_{i+1}s_is_{i+1}\,, \qquad \hbox{ for } i \in [n-1]\,, 
	\label{Snbraid1}
	\\
	&s_is_j = s_js_i\,, \qquad \hbox{ for } i,j \in [n-1], j \notin \{i\pm 1\}\,.
	\label{Snbraid2}
\end{align}

\subsection{Action on $\ZZ^n$}

The symmetric group $S_n$ acts on $\ZZ^n$ by permuting co-ordinates. 

 Let $\varepsilon_k = (0,\ldots,0,1,0,\ldots,0) \in \ZZ^n$ be the vector with $1$ in the $k$-th position and $0$ everywhere else. Equip $\ZZ^n$ with the standard bilinear form $(\varepsilon_i, \varepsilon_j) = \delta_{i,j}$ for $i,j \in [n]$\,. This form is $S_n$-invariant: $(w\alpha,w\beta) = (\alpha,\beta)$ for $w \in S_n$, $\alpha,\beta \in \ZZ^n$. 

Let $$ \alpha_i = \varepsilon_i - \varepsilon_{i+1}\,, \qquad \hbox{ for } i \in [n-1] \,.$$

The action of $s_i$ on $\ZZ^n$ can be described as  \begin{equation}\label{siactZn}
	s_i \beta = \beta - (\beta,\alpha_i) \alpha_i \,, \qquad \hbox{ for } i \in [n-1], \beta \in \ZZ^n \,.
\end{equation}
In particular, if $(\beta,\alpha_i) = 0$ then $s_i\beta = \beta$. 

\subsection{The length function} Every element $w \in S_n$ can be expressed as a word in $s_1,\ldots,s_{n-1}$. The \textit{length} of $w$, $\ell(w)$ is the length of the smallest word for $w$. A \textit{reduced expression} for $w$ is an expression for $w$ with length $\ell(w)$.

Let $$ R_+ = \{\varepsilon_i - \varepsilon_j \,|\, 1 \leq i < j \leq n\} \qquad \text{ and } \qquad R_{-} = - R_+ \,.$$

For $w \in S_n$, the set of \textit{inversions} of $w$ is $$\Inv(w) = \{ \gamma \in R_+ \,|\, w \gamma \in R_{-} \}\,.$$

Let $w = s_{i_1}\ldots s_{i_\ell}$ be a reduced expression for $w$. Then \cite[Chap. VI, \S 1.6, Corollary 2]{Bourbaki} says \begin{equation}
	\Inv(w) = \{  s_{i_\ell}\ldots s_{i_{j+1}} \alpha_{i_j} \,|\, j \in [\ell]\}\,.
	\label{eq:Invset}
\end{equation}  
Consequently, \begin{equation}\label{len=inv}
	\ell(w) = \# \Inv(w)\,. 
\end{equation}

\subsection{Parabolic subgroups and quotients}\label{subsec:parabolicsubgrquotient} 

Let \begin{equation}\label{Znplusdef}
	(\ZZ^n)_+ = \{ \lambda \in \ZZ^n \,|\, \lambda_1 \geq \ldots \geq \lambda_n \} \,, \, \hbox{ and } \, (\ZZ_{\geq 0}^n)_+ = \{\lambda \in \ZZ_{\geq 0}^n \,|\, \lambda_1 \geq \ldots \geq \lambda_n\}\,.
\end{equation}

For $\lambda \in \ZZ_{\geq 0}^n$ denote by $S_{n,\lambda}$ the stabilizer subgroup $\{w \in S_n \,|\, w\lambda = \lambda \}$ and let $S_n^\lambda$ be the set of minimal length coset representatives of $S_n/S_{n,\lambda}$. If $\lambda = (n^{m_n},\ldots,1^{m_1}) = (\underbrace{n,\ldots,n}_{m_n},\ldots,\underbrace{1,\ldots,1}_{m_1})$, then $S_{n,\lambda} = S_{[1,m_n]}\times \ldots \times S_{[n-m_1+1,n]}$. For $1 \leq a \leq b \leq n$, the subgroup $S_{[a,b]}$ of $S_n$ is isomorphic to $S_{b-a+1}$ and is generated by $s_{a},\ldots,s_{b-1}$ , and $S_{n,\lambda}$ is generated by $\{ s_i \,|\, s_i \lambda = \lambda \}$ . 


The following proposition will be crucial in our analysis.

\begin{proposition}{\sc\cite[Proposition 2.4.4]{BjornerBrenti}}
	\label{Snfactorization}
	Let $\lambda \in (\ZZ_{\geq 0}^n)_+$. Then every element $w \in S_n$ has a unique factorization as $w = uv$ with $u\in S_n^\lambda, v \in S_{n,\lambda}$. Moreover, $\ell(w) = \ell(u) + \ell(v)$.
	
\end{proposition}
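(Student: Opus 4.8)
The plan is to realize the factorization through inversion sets, building on \eqref{eq:Invset} and \eqref{len=inv}. Write $J = \{i \in [n-1] : s_i\lambda = \lambda\}$, so that $S_{n,\lambda} = \langle s_i : i \in J\rangle$, and set $\Phi_\lambda = \{\gamma \in R_+ : (\gamma,\lambda) = 0\}$, the positive roots orthogonal to $\lambda$; since $\lambda$ is dominant one has $(\gamma,\lambda) \geq 0$ for every $\gamma \in R_+$, and $\Phi_\lambda$ is precisely the set of positive roots in the $\ZZ$-span of $\{\alpha_i : i \in J\}$. I would first record three facts. (i) For $v \in S_{n,\lambda}$ one has $\Inv(v) \subseteq \Phi_\lambda$: if $\gamma \in \Inv(v)$ then $v\gamma \in R_-$, while $S_n$-invariance of the form together with $v\lambda = \lambda$ gives $(\gamma,\lambda) = (v\gamma,\lambda) \leq 0$, forcing $(\gamma,\lambda) = 0$. (ii) Each $v \in S_{n,\lambda}$ maps $R_+\setminus\Phi_\lambda$ bijectively onto itself: for $\gamma = \varepsilon_i - \varepsilon_j \in R_+\setminus\Phi_\lambda$ we have $i<j$ and $\lambda_i > \lambda_j$, and since $v$ stabilizes $\lambda$ it carries each maximal constant block of $\lambda$ to itself; as these blocks are intervals arranged in strictly decreasing order of value, $v(i)$ remains strictly to the left of $v(j)$, whence $v\gamma \in R_+$. (iii) Combining the standard fact that $s_i$ is a right descent of $u$ iff $u\alpha_i \in R_-$ (itself read off from \eqref{eq:Invset}) with the structure of $\Phi_\lambda$, an element $u$ has minimal length in its coset $uS_{n,\lambda}$ iff $u\alpha_i \in R_+$ for all $i \in J$, iff $\Inv(u)\cap\Phi_\lambda = \emptyset$.

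For existence I would take $u$ to be an element of minimal length in the coset $wS_{n,\lambda}$ and set $v = u^{-1}w \in S_{n,\lambda}$; by (iii) this $u$ lies in $S_n^\lambda$. The heart of the argument is the length statement, which I would obtain from the inversion identity
\begin{equation*}
	\Inv(uv) = \Inv(v) \,\sqcup\, v^{-1}\Inv(u) \qquad \text{whenever } u \in S_n^\lambda,\ v \in S_{n,\lambda}.
\end{equation*}
The two pieces are disjoint because $\Inv(v) \subseteq \Phi_\lambda$ by (i), whereas $v^{-1}\Inv(u) \subseteq R_+\setminus\Phi_\lambda$ by (ii) and (iii). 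For the inclusion $\supseteq$ one argues on each summand: given $\gamma \in \Inv(v)$, the root $-v\gamma$ lies in $\Phi_\lambda\cap R_+$, so $u(-v\gamma)\in R_+$ by (iii) and hence $uv\gamma \in R_-$; given $\delta \in \Inv(u)$, the root $v^{-1}\delta$ is positive by (ii)–(iii) and $uv(v^{-1}\delta) = u\delta \in R_-$. For the reverse inclusion $\subseteq$ one splits $\gamma \in \Inv(uv)$ according to whether $\gamma \in \Phi_\lambda$ or not, using (ii)–(iii) to locate $v\gamma$ and then reading off membership in $\Inv(v)$ or in $v^{-1}\Inv(u)$. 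Counting cardinalities via \eqref{len=inv} then yields $\ell(uv) = \ell(u) + \ell(v)$.

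Uniqueness follows from this additivity: if $w = uv = u'v'$ are two factorizations of the required form, then $u' = u(vv'^{-1})$ with $vv'^{-1} \in S_{n,\lambda}$, so $\ell(u') = \ell(u) + \ell(vv'^{-1}) \geq \ell(u)$, and by symmetry $\ell(u) \geq \ell(u')$; hence $\ell(vv'^{-1}) = 0$, giving $v = v'$ and $u = u'$. I expect the only delicate point to be fact (ii) and, through it, the verification of the inversion identity: the whole argument rests on the observation that elements of $S_{n,\lambda}$ cannot create inversions outside $\Phi_\lambda$, and this is exactly where dominance of $\lambda$ enters, via the constant blocks being intervals listed in decreasing order. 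Once (i)–(iii) are established the remaining bookkeeping is routine.
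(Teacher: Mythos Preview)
Your argument is correct. The paper does not supply its own proof of this proposition: it is quoted as \cite[Proposition 2.4.4]{BjornerBrenti} and used as a black box. So there is no ``paper's proof'' to compare against.

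For what it is worth, your route differs somewhat from the standard textbook treatment. Bj\"orner--Brenti work in an arbitrary Coxeter system and obtain the factorization and length additivity from the exchange condition (or, equivalently, from an induction on $\ell(w)$ using right descents), never mentioning roots. Your proof instead exploits the concrete $S_n$ setting: the key step is the inversion-set identity $\Inv(uv)=\Inv(v)\sqcup v^{-1}\Inv(u)$, which you establish by partitioning $R_+$ into $\Phi_\lambda$ and its complement and checking that $v\in S_{n,\lambda}$ permutes each piece. This is a clean and self-contained argument; the only place that deserves a word more is the second ``iff'' in your fact (iii), where you pass from $u\alpha_i\in R_+$ for $i\in J$ to $\Inv(u)\cap\Phi_\lambda=\emptyset$. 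In type $A$ this is immediate (increasing on each block implies increasing on every pair within a block), and you implicitly use exactly this, but it would read better to say so explicitly rather than leaving it bundled into (iii).
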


	\begin{lemma}\label{lambetpos}
		Let $\lambda \in (\ZZ_{\geq 0}^n)_+$ and $u \in S_n^\lambda$ and $\beta \in \Inv(u)$. Then $(\lambda , \beta)> 0 $.
	\end{lemma}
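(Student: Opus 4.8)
The plan is to compute $(\lambda,\beta)$ directly and then exclude the equality case by exploiting the minimality of $u$. Since $\beta \in \Inv(u) \subseteq R_+$, I would first write $\beta = \varepsilon_i - \varepsilon_j$ with $1 \le i < j \le n$. With the standard form, this gives $(\lambda,\beta) = \lambda_i - \lambda_j$, and because $\lambda \in (\ZZ_{\geq 0}^n)_+$ is weakly decreasing and $i < j$, we immediately get $(\lambda,\beta) = \lambda_i - \lambda_j \geq 0$. So the whole content of the lemma is the strictness, i.e. ruling out $(\lambda,\beta) = 0$.

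Assume for contradiction that $(\lambda,\beta) = 0$, so $\lambda_i = \lambda_j$. Since $\lambda$ is weakly decreasing, all coordinates in between coincide, $\lambda_i = \lambda_{i+1} = \cdots = \lambda_j$, and hence the transposition $s_\beta = (i,j)$ fixes $\lambda$; that is, $s_\beta \in S_{n,\lambda}$. The key step is then to convert the hypothesis $\beta \in \Inv(u)$ into a length drop: by definition $\beta \in \Inv(u)$ means $u\beta \in R_-$, i.e. $u(i) > u(j)$. Invoking the standard fact relating reflections and length (for any $w$ and any positive root $\gamma$ one has $\ell(w s_\gamma) < \ell(w)$ if and only if $w\gamma \in R_-$; see \cite{BjornerBrenti}), I conclude $\ell(u s_\beta) < \ell(u)$. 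But $s_\beta \in S_{n,\lambda}$ forces $u s_\beta$ to lie in the same coset $u S_{n,\lambda}$ as $u$, so $u$ cannot be the minimal length representative of its coset, contradicting $u \in S_n^\lambda$. Therefore $(\lambda,\beta) \neq 0$, and combined with $(\lambda,\beta) \geq 0$ this yields $(\lambda,\beta) > 0$.

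The step I expect to be the only real obstacle is the reflection-length fact for the non-simple reflection $s_\beta = (i,j)$, since $\beta$ need not be a simple root. If one prefers to avoid quoting it, the same contradiction can be reached more combinatorially: \autoref{Snfactorization} (via its descent characterization) implies that a minimal coset representative $u \in S_n^\lambda$ is increasing on each maximal block of indices on which $\lambda$ is constant. Since $\lambda_i = \cdots = \lambda_j$ places $i$ and $j$ in one such block, this gives $u(i) < u(j)$ directly, contradicting $u(i) > u(j)$ and completing the proof without the general reflection lemma. Either route isolates the same essential point, namely that an inversion lying in the root subsystem of the stabilizer $S_{n,\lambda}$ is incompatible with minimality of $u$.
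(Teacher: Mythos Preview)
Your proof is correct. Both your argument and the paper's reach the same contradiction---that $(\lambda,\beta)=0$ would produce a shorter element in the coset $uS_{n,\lambda}$---but the mechanics differ. The paper invokes the description \eqref{eq:Invset} of $\Inv(u)$ via a fixed reduced expression $u=s_{i_1}\cdots s_{i_\ell}$, writes $\beta=s_{i_\ell}\cdots s_{i_{j+1}}\alpha_{i_j}$, and then uses $S_n$-invariance of the form together with \eqref{siactZn} to show that the conjugate $s_{i_\ell}\cdots s_{i_j}\cdots s_{i_\ell}$ lies in $S_{n,\lambda}$, so deleting the letter $s_{i_j}$ from the word for $u$ stays in the same coset. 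You instead work directly with the (possibly non-simple) reflection $s_\beta=(i,j)$: from $\lambda_i=\lambda_j$ you get $s_\beta\in S_{n,\lambda}$, and from $u\beta\in R_-$ you quote the standard fact $\ell(us_\beta)<\ell(u)$. Your route is a bit more conceptual and avoids fixing a reduced word, at the cost of citing the reflection--length lemma for non-simple reflections; the paper's route is more self-contained with respect to the tools already set up in \S2. Your alternative via the block-monotonicity characterization of $S_n^\lambda$ is also valid and arguably the cleanest of the three.
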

	
	\begin{proof}
		$(\lambda , \beta) \geq 0$ since $\lambda \in (\ZZ_{\geq 0}^n)_+$ and $\beta \in R_+$. We will show that if $(\lambda, \beta) = 0$ then there is an element shorter than $u$ in $uS_{n,\lambda}$.
		
		Suppose $u = s_{i_1}\ldots s_{i_\ell}$ is a reduced expression for $u$. Since $\beta \in \Inv(u)$, by \eqref{eq:Invset}, $\beta = s_{i_\ell}\ldots s_{i_{j+1}}\alpha_{i_j}$ for some $j \in [\ell]$.
		
		If $(\lambda,\beta) = 0$, then by the $S_n$-invariance of $(-,-)$, $(s_{i_{j+1}}\ldots s_{i_\ell}\lambda , \alpha_{i_j}) = 0$. So by \eqref{siactZn}, $s_{i_j}(s_{i_{j+1}}\ldots s_{i_\ell}\lambda) = s_{i_{j+1}}\ldots s_{i_\ell}\lambda$, which gives $s_{i_\ell}\ldots s_{i_j}\ldots s_{i_\ell} \lambda = \lambda$, i.e, $s_{i_\ell}\ldots s_{i_j}\ldots s_{i_\ell} \in S_{n,\lambda}$. Then $s_{i_j}\ldots s_{i_\ell}S_{n,\lambda} = s_{i_{j+1}}\ldots s_{i_\ell}S_{n,\lambda}$. So $uS_{n,\lambda} = s_{i_1}\ldots s_{i_{j-1}}s_{i_{j+1}}\ldots s_{i_\ell} S_{n,\lambda}$. 
	\end{proof}
	
	Let \begin{equation}\label{varpikdef}
		\varpi_k = \varepsilon_1+\ldots+\varepsilon_k\,,\qquad \hbox{ for } k \in [n]\,.
	\end{equation} 

	\begin{corollary}\label{omegak1}
		Let $u \in S_n^{\varpi_k}$ with a reduced expression $u = s_{i_1}\ldots s_{i_\ell}$. \\Then $ (s_{i_{j+1}}\ldots s_{i_\ell} \varpi_k , \alpha_{i_j} ) = 1$ for every $j \in [\ell]$.
	\end{corollary}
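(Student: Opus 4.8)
The plan is to combine the strict positivity already established in \autoref{lambetpos} with the elementary fact that $\varpi_k$, together with all of its $S_n$-translates, is a $\{0,1\}$-vector. The point of the corollary is that a pairing known to be a positive integer, and known \emph{a priori} to be bounded by $1$ in absolute value, must equal exactly $1$.

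First I would reinterpret the inner product in the statement as a pairing of the form $(\varpi_k, \beta)$ for an inversion $\beta$ of $u$. Writing $w = s_{i_{j+1}}\ldots s_{i_\ell}$, so that $w^{-1} = s_{i_\ell}\ldots s_{i_{j+1}}$, the $S_n$-invariance of the bilinear form gives
\[
(s_{i_{j+1}}\ldots s_{i_\ell}\varpi_k, \alpha_{i_j}) = (\varpi_k, s_{i_\ell}\ldots s_{i_{j+1}}\alpha_{i_j}).
\]
Setting $\beta_j = s_{i_\ell}\ldots s_{i_{j+1}}\alpha_{i_j}$, formula \eqref{eq:Invset} identifies $\beta_j$ as the $j$-th element of $\Inv(u)$. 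Hence \autoref{lambetpos}, applied with $\lambda = \varpi_k \in (\ZZ_{\geq 0}^n)_+$ and $\beta = \beta_j$, yields $(\varpi_k, \beta_j) > 0$, so the quantity in the statement is a strictly positive integer.

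Next I would bound it from above. Put $\mu = s_{i_{j+1}}\ldots s_{i_\ell}\varpi_k$. Since $S_n$ acts on $\ZZ^n$ by permuting coordinates and $\varpi_k = \varepsilon_1 + \ldots + \varepsilon_k$ has exactly $k$ coordinates equal to $1$ and the rest equal to $0$, the vector $\mu$ again has all coordinates in $\{0,1\}$. Because $\alpha_{i_j} = \varepsilon_{i_j} - \varepsilon_{i_j+1}$, this forces
\[
(\mu, \alpha_{i_j}) = \mu_{i_j} - \mu_{i_j+1} \in \{-1, 0, 1\}.
\]
Combining the two bounds, the value is a strictly positive element of $\{-1,0,1\}$, hence equals $1$, as claimed. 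There is no real obstacle here; the only steps demanding a little care are the bookkeeping of the inverse permutation when invoking invariance of the form, and the observation that it is precisely the $\{0,1\}$-nature of the fundamental weight $\varpi_k$ that caps the pairing at $1$.
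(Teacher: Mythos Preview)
Your proof is correct and follows essentially the same approach as the paper: use $S_n$-invariance to rewrite the pairing as $(\varpi_k,\beta_j)$ with $\beta_j\in\Inv(u)$, invoke \autoref{lambetpos} for strict positivity, and then cap by $1$ using the $\{0,1\}$-nature of $\varpi_k$. The only cosmetic difference is that the paper bounds on the $\varpi_k$ side (observing $(\varpi_k,\alpha)\in\{0,1\}$ for $\alpha\in R_+$), while you bound on the $\mu$ side by computing $\mu_{i_j}-\mu_{i_j+1}$ directly; both amount to the same minuscule-weight observation.
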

	
	\begin{proof}
		For $j \in [\ell]$, $ ( s_{i_{j+1}}\ldots s_{i_\ell} \varpi_k , \alpha_{i_j} ) = ( \varpi_k , s_{i_\ell}\ldots s_{i_{j+1}} \alpha_{i_j} ) > 0 \,.$
		Since $ (\varpi_k,\alpha) \in \{0,1\}$ for $\alpha \in R_+ \,$, hence $( s_{i_{j+1}}\ldots s_{i_\ell} \varpi_k , \alpha_{i_j} ) = 1\,$.
	\end{proof}

\subsection{Columns}\label{columndef}

A \textit{column} of length $\ell$ is a sequence $I = (i_1,\ldots,i_\ell)$ such that $1 \leq i_1 <\ldots <i_\ell\leq n$. Denote the set of columns of length $\ell$ by $B(\varpi_\ell)$. We can draw a column $I \in B(\varpi_{\ell})$  as a filling of the Young diagram of the partition $\varpi_{\ell} = (1,\ldots,1,0,\ldots,0)$. 

For example, if $n \geq 4$ then $$\ytableausetup{nosmalltableaux,centertableaux} (1,2,4) = \ytableaushort{1,2,4}  \in B(\varpi_3) \,.$$

A column of length $\ell$ can be identified with a subset of size $\ell$ from $[n]$. For $I \in B(\varpi_\ell)$, let $I^c$ be the column of length $n-\ell$ whose entries are the elements of the complementary set $[n] - I$, written in increasing order.

If $I = (i_1,\ldots,i_\ell)\in B(\varpi_\ell)$ and $I^c = (i^c_{\ell+1},\ldots,i^c_n)$ then let \begin{equation}
	u_I = \begin{pmatrix}
	1 & \ldots & \ell & \ell+1 & \ldots &n\\ i_1 & \ldots & i_\ell & i^c_{\ell+1} & \ldots & i^c_n
\end{pmatrix}.
\end{equation}

\begin{proposition}{\sc\cite[\S 2.4]{BjornerBrenti}}\label{Snparabolicquotients}
	For $\ell \in [n]$, $S_n^{\varpi_\ell} = \{u_I: I \in B(\varpi_\ell)\}$.
\end{proposition}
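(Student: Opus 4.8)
The plan is to show the two sets coincide by proving each $u_I$ is the minimal length coset representative of its coset, and conversely that every minimal length representative arises this way. First I would observe that $S_{n,\varpi_\ell} = S_{[1,\ell]} \times S_{[\ell+1,n]}$, so the cosets $w S_{n,\varpi_\ell}$ are in bijection with the possible ordered images of the block $\{1,\ldots,\ell\}$ — equivalently, with the $\ell$-element subsets of $[n]$, hence with columns $I \in B(\varpi_\ell)$. This gives a bijection between $S_n/S_{n,\varpi_\ell}$ and $B(\varpi_\ell)$, so both sides of the claimed equality have the same cardinality $\binom{n}{\ell}$, and it suffices to check that each $u_I$ is the minimal length element of its own coset and that distinct $I$ give distinct cosets.

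The key computation is to identify $\Inv(u_I)$ and show $u_I$ has no descents into $S_{n,\varpi_\ell}$, i.e.\ that $u_I s_i > u_I$ for every simple reflection $s_i$ with $s_i \in S_{n,\varpi_\ell}$; by the standard characterization of minimal coset representatives (\autoref{Snfactorization} and its underlying theory in \cite{BjornerBrenti}), this is equivalent to $u_I$ being the shortest element of $u_I S_{n,\varpi_\ell}$. The simple reflections in $S_{n,\varpi_\ell}$ are exactly the $s_i$ with $i \neq \ell$, i.e.\ $i \in [\ell-1]$ or $i \in [\ell+1,n-1]$. The condition $u_I s_i > u_I$ for such $i$ amounts to $u_I(i) < u_I(i+1)$, which holds precisely because the top block entries $i_1 < \ldots < i_\ell$ are increasing and the bottom block entries $i^c_{\ell+1} < \ldots < i^c_n$ are increasing, by the definition of $u_I$. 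So each $u_I$ is genuinely the minimal length representative; distinctness of cosets for distinct $I$ follows because $u_I S_{n,\varpi_\ell}$ is determined by the unordered image $\{i_1,\ldots,i_\ell\} = I$ of $\{1,\ldots,\ell\}$.

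A cleaner route that avoids counting is to argue directly that for any $w \in S_n$, the unique minimal coset representative $u$ with $w = u v$, $v \in S_{n,\varpi_\ell}$ (existence and uniqueness from \autoref{Snfactorization}) must equal $u_I$ where $I = \rows\bigl(\{w^{-1}(\text{block images})\}\bigr)$ — more concretely, $I$ is the increasingly-sorted set $w(\{1,\ldots,\ell\})$ and $u_I$ sends $\{1,\ldots,\ell\}$ to that sorted set and the complement to its sorted complement. Since right multiplication by $v \in S_{n,\varpi_\ell}$ permutes within the two blocks $[1,\ell]$ and $[\ell+1,n]$ separately, every element of the coset $w S_{n,\varpi_\ell}$ has the same image-set for the top block, and $u_I$ is the unique element of the coset whose values are increasing within each block. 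By \eqref{len=inv} and the inversion description \eqref{eq:Invset}, having values increasing within each block means $u_I$ has no inversions coming from pairs inside a single block, so $u_I$ minimizes the length in its coset.

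The main obstacle is the bookkeeping in verifying that $u_I$ has no inversions $\alpha_i = \varepsilon_i - \varepsilon_{i+1}$ with $s_i \in S_{n,\varpi_\ell}$, and in confirming that this length-minimality is equivalent to being the chosen minimal coset representative. I expect this to reduce entirely to the two monotonicity conditions $i_1 < \ldots < i_\ell$ and $i^c_{\ell+1} < \ldots < i^c_n$ built into the definition of $u_I$, so the only real content is matching these combinatorial conditions to the Coxeter-theoretic characterization of $S_n^{\varpi_\ell}$; no hard estimate is involved.
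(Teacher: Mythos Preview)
Your argument is correct: the characterization of minimal coset representatives via $u s_i > u$ for all simple $s_i \in S_{n,\varpi_\ell}$ reduces exactly to the two monotonicity conditions built into the definition of $u_I$, and the cardinality/bijection check is fine. The paper itself does not supply a proof of this proposition but simply cites \cite[\S 2.4]{BjornerBrenti}, so there is no in-paper argument to compare against; your write-up is the standard one found in that reference.
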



Define a partial order $\leq$ on $B(\varpi_{\ell})$ as follows. 

\noindent Let $E = (e_1,\ldots,e_\ell), F = (f_1,\ldots,f_\ell) \in B(\varpi_{\ell})$. Define \begin{equation}
	\label{eq:colposetdef}
	E \leq F \qquad\hbox{ if }\qquad e_i \leq f_i \hbox{ for all } i \in [\ell]\,.
\end{equation}
\autoref{fig:Bvarpi3} shows the Hasse diagram of the poset $B(\varpi_3)$ when $n = 5$.

Next, we define an action of the symmetric group $S_n$ on $B(\varpi_{\ell})$.

\noindent We can identify a column $C = (c_1,\ldots,c_\ell)$ with the element $\vec{C} = \varepsilon_{c_1}+\ldots+ \varepsilon_{c_\ell} \in \ZZ^n$. Then define $s_jC$ to be the column obtained from  $s_j\vec{C}$. 

\noindent Equivalently,
\begin{quote}
	if either both $j,j+1 \in C$ or both $j,j+1 \notin C$ then $s_jC = C$,
	\\
	if $j \in C, j+1 \notin C$, then $s_jC$ is obtained by replacing $j$ with $j+1$ in $C$,	
	\\
	if $j \notin C, j+1 \in C$, then $s_jC$ is obtained by replacing $j+1$ with $j$ in $C$.
\end{quote}


\noindent This defines an action of the symmetric group $S_n$ on $B(\varpi_{\ell})$.

\begin{example}
	\ytableausetup{centertableaux,nosmalltableaux}
	\begin{align*}
		\hbox{If } C = \ytableaushort{1,2,5}\,, \quad \hbox{ then } \quad s_1C = \ytableaushort{1,2,5}\,,\quad s_2C = \ytableaushort{1,3,5}\,,\quad s_3C = \ytableaushort{1,2,5}\,, \quad s_4C = \ytableaushort{1,2,4}\,.
	\end{align*}
\end{example}

The following lemma is a consequence of the definition of the partial order $\leq$ on columns and the action of the symmetric group on columns, as defined above.
\begin{lemma}\label{lemma:sjEcomparison}
	Let $\ell \in [n]$ and $E \in B(\varpi_{\ell})$. Then 
		\item[(1)] $s_jE < E$ if and only if $j \notin E$, $j+1 \in E$ if and only if $(\vec{E},\alpha_j)=-1$,
		\item[(2)] $s_jE > E$ if and only if $j \in E$, $j+1 \notin E$ if and only if $(\vec{E},\alpha_j)=+1$,
		\item[(3)] $s_jE = E$ if and only if either both $j, j+1 \in E$ or both $j,j+1 \notin E$ if and only if $(\vec{E},\alpha_j)=0$.
	
\end{lemma}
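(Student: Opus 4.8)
The plan is to reduce all three assertions to a single four-way case analysis according to the membership of $j$ and $j+1$ in $E$, since exactly this data governs the pairing $(\vec{E},\alpha_j)$, the action of $s_j$, and the order comparison simultaneously.

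First I would compute the pairing. Writing $\alpha_j = \varepsilon_j - \varepsilon_{j+1}$ and $\vec{E} = \sum_{k \in E}\varepsilon_k$, the $S_n$-invariant form with $(\varepsilon_a,\varepsilon_b) = \delta_{a,b}$ gives immediately
$$(\vec{E},\alpha_j) = (\vec{E},\varepsilon_j) - (\vec{E},\varepsilon_{j+1}),$$
and each of these two terms equals $1$ or $0$ according to whether the corresponding index lies in $E$. Hence $(\vec{E},\alpha_j) = +1$ when $j \in E$ and $j+1 \notin E$; it is $-1$ when $j \notin E$ and $j+1 \in E$; and it is $0$ in the two remaining cases. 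This already establishes the rightmost equivalence in each of (1), (2), (3), and exhibits the four membership cases as mutually exclusive and exhaustive.

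Next, in each case I would read off $s_j E$ directly from the definition of the action on columns and compare it with $E$ in the order $\leq$. The one point that genuinely needs care is verifying that, in the case $j \in E$, $j+1 \notin E$, replacing $j$ by $j+1$ produces again a strictly increasing column differing from $E$ in a single slot, so that the entrywise comparison is immediate. This uses that no integer lies strictly between $j$ and $j+1$: if $e_i = j$, then $e_{i+1} > j$ together with $e_{i+1} \neq j+1$ forces $e_{i+1} \geq j+2$, so the altered entry remains in slot $i$ and every coordinate of $s_j E$ is $\geq$ the corresponding coordinate of $E$, with strict inequality in slot $i$; thus $s_j E > E$. The case $j \notin E$, $j+1 \in E$ is symmetric — one replaces $j+1$ by $j$ and obtains $s_j E < E$ — while in the two remaining cases the definition gives $s_j E = E$ outright.

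Combining the two computations pairs off each ordering relation with its membership condition and with the value of $(\vec{E},\alpha_j)$, which is precisely the content of (1)--(3). I do not expect any real obstacle: the statement is a bookkeeping unwinding of the definitions, and the only place to stay attentive is the consecutive-integer observation above, which is exactly what guarantees the modified sequence stays sorted and moves only one coordinate.
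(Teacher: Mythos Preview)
Your proposal is correct and is precisely the unwinding of definitions that the paper has in mind; the paper itself does not give a proof but simply states that the lemma is a consequence of the definition of the partial order $\leq$ on columns and the action of $S_n$ on columns.
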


The following lemma is fundamental to all our proofs.
\begin{lemma}\label{lemma:sjuE}
	Let $E \in B(\varpi_{\ell})$ for some $\ell \in [n]$. Then
	\begin{align*}
		\ell(s_ju_E) = \begin{cases}
			\ell(u_E) - 1\,,& \hbox{ if } j \notin E, j+1 \in E,
			\\
			\ell(u_E) + 1\,,& \hbox{ otherwise}.
		\end{cases}
	\end{align*}
	Moreover, if $s_jE \neq E$ then $s_ju_E = u_{s_jE}$, and if $s_jE = E$ then $s_ju_E = u_Es_k$ where $k = u_E^{-1}(j)$.
\end{lemma}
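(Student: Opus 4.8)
The plan is to reduce everything to the one-line notation of $u_E$ together with the standard fact that, for $w \in S_n$, left multiplication by a simple reflection satisfies $\ell(s_jw) = \ell(w)+1$ if $w^{-1}(j) < w^{-1}(j+1)$ and $\ell(s_jw) = \ell(w)-1$ if $w^{-1}(j) > w^{-1}(j+1)$; equivalently $\ell(s_jw) < \ell(w)$ iff $w^{-1}\alpha_j \in R_-$, which follows from \eqref{eq:Invset} since $w^{-1}\alpha_j = \varepsilon_{w^{-1}(j)} - \varepsilon_{w^{-1}(j+1)}$. The one structural input I will use repeatedly is that, by the definition of $u_E$, the values $e_1 < \ldots < e_\ell$ of $E$ occupy the first $\ell$ positions of $u_E$ in increasing order while the values of $E^c$ occupy the last $n-\ell$ positions in increasing order; in particular $u_E^{-1}(j) \leq \ell$ precisely when $j \in E$.

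First I would settle the length formula by the four-way case split on whether $j$ and $j+1$ lie in $E$. In each of the three cases ``both in $E$'', ``both in $E^c$'', and ``$j \in E,\, j+1 \notin E$'', the position of $j$ precedes that of $j+1$ (either because $j < j+1$ sit in the same increasingly-ordered block, or because $j$ lies in the first block and $j+1$ in the second), so $u_E^{-1}(j) < u_E^{-1}(j+1)$ and the length increases by one. In the remaining case $j \notin E,\, j+1 \in E$ we have $u_E^{-1}(j) > \ell \geq u_E^{-1}(j+1)$, so the length drops by one. This matches the stated dichotomy and is consistent with the characterization of $s_jE$ in \autoref{lemma:sjEcomparison}.

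Next I would identify the product $s_ju_E$ explicitly. Left multiplication by $s_j$ interchanges the values $j$ and $j+1$ in the one-line notation of $u_E$. When $s_jE \neq E$, that is, exactly one of $j, j+1$ lies in $E$, I will check that this interchange leaves both blocks increasing: replacing $j$ by $j+1$ inside an increasing block that misses $j+1$ stays increasing because the successor of $j$ in that block is $\geq j+2$, and symmetrically for replacing $j+1$ by $j$. Hence the first block becomes $s_jE$ listed increasingly and the second becomes $(s_jE)^c$ listed increasingly, which is exactly the one-line notation of $u_{s_jE}$; thus $s_ju_E = u_{s_jE}$. When $s_jE = E$, so both of $j, j+1$ lie in the same block, the consecutiveness of $j$ and $j+1$ forces them into adjacent positions $k$ and $k+1$ with $k = u_E^{-1}(j)$, and interchanging these two values is the same as right-multiplying $u_E$ by the transposition $s_k$ of positions $k$ and $k+1$; thus $s_ju_E = u_Es_k$.

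The only genuinely delicate points are the two ``remains increasing'' and ``adjacent positions'' verifications, and both hinge on the single elementary fact that $j$ and $j+1$ are consecutive integers, so no element of $[n]$ lies strictly between them. Everything else is bookkeeping with the one-line notation of $u_E$, so I expect this to be the main, though mild, obstacle.
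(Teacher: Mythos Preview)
Your proposal is correct and follows essentially the same approach as the paper. Both arguments carry out the same case analysis on the membership of $j$ and $j+1$ in $E$, use the one-line/two-line notation of $u_E$ to see how left multiplication by $s_j$ swaps the values $j$ and $j+1$, and read off the length change and the resulting permutation; the only cosmetic difference is that you invoke the standard criterion $\ell(s_jw)=\ell(w)\pm 1 \Leftrightarrow w^{-1}(j)\lessgtr w^{-1}(j+1)$ upfront, whereas the paper displays the two-line notations explicitly and counts inversions directly.
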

\begin{proof}
		\item[(1)] 
		Suppose $j \notin E, j+1 \in E$. Since multiplying by $s_j$ on the left interchanges the preimage of $j$ and $j+1$, $s_ju_E$ has one less inversion than $u_E$. 
		\begin{align*}
			&u_E = \begin{pmatrix}
				1& \cdots\cdots\cdots & \ell & \ell+1 & \cdots\cdots\cdots & n \\ e_1& \cdots j+1 \cdots & e_\ell & e^c_{\ell+1} & \cdots j \cdots & e^c_{n} 
			\end{pmatrix}, 
			\\
			&s_ju_E = \begin{pmatrix}
				1& \cdots\cdots\cdots & \ell & \ell+1 & \cdots\cdots\cdots & n \\ e_1& \cdots j \cdots & e_\ell & e^c_{\ell+1} & \cdots j+1 \cdots & e^c_{n} 
			\end{pmatrix}.
		\end{align*}
		Since $s_jE$ is obtained by replacing $j+1$ by $j$ in $E$, $s_ju_E = u_{s_jE}$.
		
		\item[(2)] If $s_jE = E$, then either both $j,j+1 \in E$ or both $j,j+1 \notin E$. Since multiplying by $s_j$ on the left interchanges the preimage of $j$ and $j+1$, $s_ju_E$ has one more inversion than $u_E$.
		{\begin{align*}
			&u_E = \begin{pmatrix}
				1& \cdots\cdots\cdots & \ell & \ell+1 & \cdots & n \\ e_1& \cdots j, j+1 \cdots & e_\ell & e^c_{\ell+1} & \cdots  & e^c_{n} 
			\end{pmatrix}, 
			\\
			&s_ju_E = \begin{pmatrix}
				1& \cdots\cdots\cdots & \ell & \ell+1 & \cdots & n \\ e_1& \cdots j+1, j \cdots & e_\ell & e^c_{\ell+1} & \cdots  & e^c_{n} 
			\end{pmatrix},
\\
			&u_E = \begin{pmatrix}
				1& \cdots & \ell & \ell+1 & \cdots\cdots\cdots & n \\ e_1& \cdots & e_\ell & e^c_{\ell+1} & \cdots j, j+1 \cdots & e^c_{n} 
			\end{pmatrix}, 
			\\
			&s_ju_E = \begin{pmatrix}
				1& \cdots & \ell & \ell+1 & \cdots\cdots\cdots & n \\ e_1& \cdots & e_\ell & e^c_{\ell+1} & \cdots j+1, j \cdots & e^c_{n} 
			\end{pmatrix}.
		\end{align*}}
		
		Since $u_E^{-1}s_ju_E$ is a transposition and $u_E^{-1}s_ju_E(u_E^{-1}(j)) = u_E^{-1}(j+1) = u_E^{-1}(j)+1$, hence $s_ju_E = u_Es_k$ where $k = u_E^{-1}(j)$.
		
		\item[(3)] Suppose $j \in E, j+1 \notin E$. Since multiplying by $s_j$ on the left interchanges the preimage of $j$ and $j+1$, $s_ju_E$ has one more inversion than $u_E$.
		\begin{align*}
			&u_E = \begin{pmatrix}
				1& \cdots\cdots\cdots & \ell & \ell+1 & \cdots\cdots\cdots & n \\ e_1& \cdots j \cdots & e_\ell & e^c_{\ell+1} & \cdots j+1 \cdots & e^c_{n} 
			\end{pmatrix}, 
			\\
			&s_ju_E = \begin{pmatrix}
				1& \cdots\cdots\cdots & \ell & \ell+1 & \cdots\cdots\cdots & n \\ e_1& \cdots j+1 \cdots & e_\ell & e^c_{\ell+1} & \cdots j \cdots & e^c_{n} 
			\end{pmatrix}.
		\end{align*}
		Since $s_jE$ is obtained by replacing $j$ by $j+1$ in $E$, $s_ju_E = u_{s_jE}$.

\end{proof}

\subsection{The Bruhat order}

The Bruhat order $\leq$ is a partial order on $S_n$. For two elements $u,v \in S_n$, $u \leq v$ if any (or some) reduced expression for $v$ contains a subexpression for $u$. For more on Bruhat order see \cite[Chapter 2]{BjornerBrenti}. In particular, if $w$ and $i$ is such that $\ell(s_iw)>\ell(w)$ then $s_iw>w$.

For a subset $I$ of $[n]$ of size $k$, let $I_\leq$ be the ordered tuple $(i_1,\ldots,i_k)$ of increasing arrangement of $I$, i.e, $i_1 = \min I$ and  $i_j = \min(I \setminus \{i_1,\ldots,i_{j-1}\} )$ for $j \in [2,k]$. Then for a subset $I$ of size $k$ of $[n]$, $I_{\leq} \in B(\varpi_k)$.

For $w \in S_n$, let $w[k] = \{w_1,\ldots,w_k\}$. The following theorem is known as the tableau criterion for Bruhat order.
\begin{theorem}{\sc\cite{BjornerBrenti_TableauCriterion} \cite[\S 5.9]{HumphreysCoxeterGroups}}\label{bruhattableaucondition}
	Let $u,v \in S_n$. Then $u \leq v$ in Bruhat order if and only if $u[k]_{\leq} \leq v[k]_{\leq}$ in the partial order on $B(\varpi_k)$ for all $k \in [n]$.
\end{theorem}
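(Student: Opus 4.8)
The plan is to prove the two implications separately, after recasting the hypothesis in a counting form. For $w \in S_n$, $k \in [n]$ and $m \in [n]$, put $N_k^w(m) = \#\{\, i \le k : w(i) \le m \,\}$. From the definition of the order on $B(\varpi_k)$ in \eqref{eq:colposetdef} one checks the elementary equivalence that $u[k]_\le \le v[k]_\le$ holds if and only if $N_k^u(m) \ge N_k^v(m)$ for all $m$; hence the whole tableau condition is equivalent to $N_k^u(m) \ge N_k^v(m)$ for all $k, m \in [n]$. This threshold form converts the componentwise comparison of sorted columns into plain counting inequalities, which behave predictably under a single transposition.

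For the forward implication it suffices, since the Bruhat order is the transitive closure of the relations $w < wt$ (with $t$ a transposition and $\ell(wt) = \ell(w)+1$) and the order on each $B(\varpi_k)$ is transitive, to treat one step $v = u\,(a,b)$ with $a < b$ and $u(a) < u(b)$. Then $v[k] = u[k]$ for $k < a$ and $k \ge b$, while for $a \le k < b$ we have $v[k] = (u[k] \setminus \{u(a)\}) \cup \{u(b)\}$, i.e.\ the element $u(a)$ of $u[k]$ is replaced by the strictly larger $u(b) \notin u[k]$. Replacing an element of a set by a larger one not already present can only raise the sorted tuple componentwise, so $u[k]_\le \le v[k]_\le$ for every $k$.

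For the reverse implication I would induct on $\ell(v)$, using the lifting property of the Bruhat order \cite[Prop.~2.2.7]{BjornerBrenti} (applied on the right). If $\ell(v) = 0$ then $v = e$ is the minimum of the threshold order, which forces $u = e = v$. If $\ell(v) \ge 1$, fix a right descent $i$ of $v$, so $v(i) > v(i+1)$ and $vs_i < v$. If $i$ is also a descent of $u$, the goal is to verify $(us_i)[k]_\le \le (vs_i)[k]_\le$ for all $k$; the inductive hypothesis then gives $us_i \le vs_i$, and the lifting property upgrades this to $u \le v$. If instead $i$ is an ascent of $u$, the goal is to verify $u[k]_\le \le (vs_i)[k]_\le$ for all $k$; the inductive hypothesis gives $u \le vs_i$, and the lifting property again yields $u \le v$.

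In each case the move $s_i$ changes only the column at $k = i$, so everything reduces to the single index $k = i$, and this verification is the step I expect to be the main obstacle. The naive alternative of lowering $v$ by the transposition that matches the first position where $u$ and $v$ differ does not work in general --- for $u = 1324$, $v = 3214$ that move produces a permutation no longer dominating $u$ --- which is why the adjacent descent, together with the hypothesis at the neighbouring indices, are needed. Using $N_i^{vs_i}(m) = N_i^v(m) + [\,v(i+1) \le m < v(i)\,]$ and the analogous formula for $us_i$, the only thresholds that could fail lie in the range $v(i+1) \le m < v(i)$, where one needs the strict inequality $N_i^u(m) > N_i^v(m)$. I would deduce this strictness from the hypothesis at a neighbouring index: a short case split on whether $u(i) \le m$ shows that an equality $N_i^u(m) = N_i^v(m)$ would force $N_{i+1}^u(m) < N_{i+1}^v(m)$ or $N_{i-1}^u(m) < N_{i-1}^v(m)$, contradicting the threshold form at $k = i+1$ or $k = i-1$. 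This contradiction settles both cases and closes the induction.
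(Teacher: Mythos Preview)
The paper does not prove this theorem; it is quoted from the literature with citations to Bj\"orner--Brenti and Humphreys, so there is no ``paper's own proof'' to compare against. Your argument is essentially the standard one found in those references: the forward direction via covering relations $u \lessdot u(a,b)$, and the reverse direction by induction on $\ell(v)$ using a right descent and the lifting property, with the counting reformulation $N_k^w(m)$ doing the bookkeeping.

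Your sketch is correct. The equivalence between the componentwise order on $u[k]_\le$ and the threshold inequalities $N_k^u(m)\ge N_k^v(m)$ is standard, the forward step is clean, and your case analysis at $k=i$ in the inductive step does go through exactly as you describe: when $u(i)\le m$ (so $v(i)>m\ge u(i)$) the contradiction comes from $k=i-1$, and when $u(i)>m$ it comes from $k=i+1$, in both the descent and ascent cases for $u$. One small point you should make explicit when writing it up: the appeal to $k=i-1$ is vacuous when $i=1$, but in that situation the subcase hypothesis $u(1)\le m<v(1)$ already gives $N_1^u(m)=1>0=N_1^v(m)$, so the assumed equality $N_i^u(m)=N_i^v(m)$ cannot occur and the subcase is empty. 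With that remark added, the argument is complete.
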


Suppose $L_1,L_2 \in B(\varpi_k)$ for some $k \in [n]$. Then by \cite[Proposition 2.4.8]{BjornerBrenti}, $L_1 \leq L_2$ in the partial order on columns from \eqref{eq:colposetdef} if and only if $u_{L_1} \leq u_{L_2}$ in the Bruhat order.

\begin{figure}[h]
	\[\begin{tikzcd}[row sep= 0.6cm]\ytableausetup{smalltableaux}
	& \ytableaushort{3,4,5} \\
	& {\ytableaushort{2,4,5}} \\
	{\ytableaushort{2,3,5}} && {\ytableaushort{1,4,5}} \\
	{\ytableaushort{2,3,4}} && {\ytableaushort{1,3,5}} \\
	{\ytableaushort{1,3,4}} && {\ytableaushort{1,2,5}} \\
	& {\ytableaushort{1,2,4}} \\
	& {\ytableaushort{1,2,3}}
	\arrow["{s_2}"', from=2-2, to=1-2]
	\arrow["{s_3}", from=3-1, to=2-2]
	\arrow["{s_1}"', from=3-3, to=2-2]
	\arrow["{s_4}", from=4-1, to=3-1]
	\arrow["{s_1}", from=4-3, to=3-1]
	\arrow["{s_3}"', from=4-3, to=3-3]
	\arrow["{s_1}", from=5-1, to=4-1]
	\arrow["{s_4}"', from=5-1, to=4-3]
	\arrow["{s_2}"', from=5-3, to=4-3]
	\arrow["{s_2}", from=6-2, to=5-1]
	\arrow["{s_4}"', from=6-2, to=5-3]
	\arrow["{s_3}", from=7-2, to=6-2]
\end{tikzcd}\]	
	\caption{The Hasse diagram of the poset $B(\varpi_3)$ when $n= 5$.}
	\label{fig:Bvarpi3}
\end{figure}
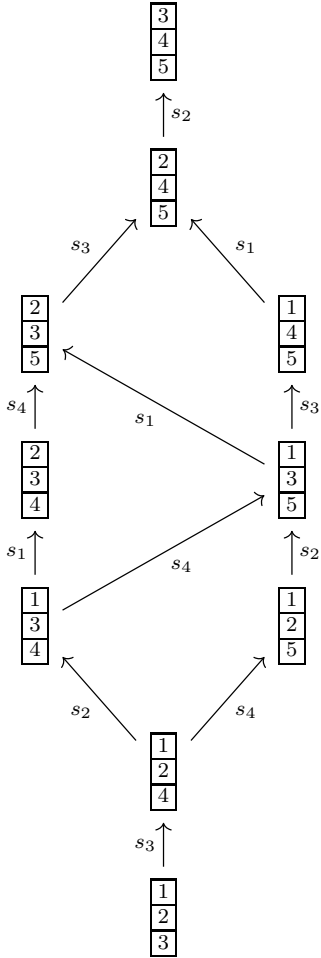


\newpage

\section{The affine Hecke algebra}

This section presents the definition of the type $GL_n$ affine Hecke algebra through generators and relations. We review the finite Hecke symmetrizer $\one_0$, which is a $t$-analog of the element $\sum\limits_{w \in S_n}w$ in the group algebra of the symmetric group $S_n$. We present with complete proofs various commutation relations between the $T_i$s and the $X_j$s. Finally we recall the Satake isomorphism theorem which says that the spherical Hecke algebra is isomorphic to the space of symmetric polynomials which in turn is the ring of rational representations of $GL_n(\CC)$.

\subsection{The affine Hecke algebra}\label{ahadef} The \textit{affine Hecke algebra} (AHA) is the $\ZZ[t^{\pm 1}]$ algebra $H$ with generators $T_1,\ldots,T_{n-1}$ and $X_1^{\pm 1},\ldots X_n^{\pm 1}$ subject to the following relations 	\begin{align}
		&T_i^2 = (t-1)T_i +t\,, \qquad \hbox{ for } i \in [n-1],
	\label{quadTi}
	\\
	&T_iT_{i+1}T_i = T_{i+1}T_i T_{i+1}\,, \qquad \hbox{ for } i \in [n-2],
	\label{braid1}
	\\
	& T_iT_j = T_jT_i\,, \qquad \hbox{ for } i,j \in [n-1], |i-j|>1,
	\label{braid2}
\end{align}
and \begin{align}\label{XiXjcommutes}
	X_i X_j = X_j X_i\,, \qquad \hbox{ if } i,j \in [n], 
\end{align}
and
\begin{align}
	&T_i X_i T_i = tX_{i+1}\,, \qquad \qquad \hbox{ if } i \in [n-1],
	\\
	&T_i X_j = X_j T_i\,, \qquad \hbox{ if } i \in [n-1],j \in [n],\hbox{ and } j \notin \{i,i+1\}.
	\label{XaffHeckerelsF}
\end{align}

\vspace{1em}
We will use the following equivalent form of \eqref{quadTi} \begin{equation}\label{tT_iinv}
	tT_i^{-1} = T_i+1-t\,.
\end{equation} 

For $w \in S_n$, let $w = s_{i_1}\ldots s_{i_\ell}$ be a reduced expression. Then define $T_w = T_{i_1}\ldots T_{i_m}$. Since any two reduced expressions for $w$ are related by the braid relations \eqref{Snbraid1}, \eqref{Snbraid2} and $T_1,\ldots,T_{n-1}$ satisfies the braid relations \eqref{braid1},\eqref{braid2}, $T_w$ is a well defined element of $H$.  

Then \begin{align}
	T_iT_w = \begin{cases}
		T_{s_iw}\,, &\hbox{ if } s_iw>w,
		\\
		T_iT_iT_{s_iw}\,, &\hbox{ if } s_iw<w,
	\end{cases}
	= \begin{cases}
		T_{s_iw}\,, &\hbox{ if } s_iw>w,
		\\
		(t-1)T_{w} + tT_{s_iw}\,, &\hbox{ if } s_iw<w.
	\end{cases}
	\label{T_iT_w}
\end{align}

In particular, \begin{equation}\label{TuveqTuTv}
	T_{uv} = T_uT_v\,, \qquad \hbox{ if } u,v\in S_n \, ,\, \ell(uv) = \ell(u) + \ell(v).
\end{equation}
%
%

\subsection{The finite Hecke symmetrizer}

Let \begin{equation}\label{10def}
	\one_0 = \sum_{w\in S_n}T_w\,.
\end{equation}

Using \eqref{T_iT_w}, \begin{align*}
	T_i\one_0 &= \sum_{w \in S_n}T_i T_w = \sum_{w: s_iw>w} (T_iT_w + T_iT_{s_iw}) =  \sum_{w: s_iw>w} (T_{s_iw} + (t-1)T_{s_iw}+tT_w) = t\one_0 .
\end{align*}
Therefore,
\begin{align}\label{Tw10}
	T_w \one_0 = t^{\ell(w)}\one_0\,, \qquad \hbox{ for } w\in S_n\,, 
\end{align}
and \begin{align}
	\one_0^2 = \sum_{w \in S_n}T_w \one_0 = \sum_{w \in S_n} t^{\ell(w)}\one_0 = W_0(t)\one_0\,,
\end{align}
where \begin{equation}
	W_0(t) = \sum_{w \in S_n } t^{\ell(w)}\,.
\end{equation}

Let $(\ZZ^n)_+ = \{\alpha \in \ZZ^n \,|\, \alpha_1 \geq \alpha_2 \geq \ldots \geq \alpha_n\}$. The symmetric group $S_n$ acts on $\ZZ^n$ by permuting co-ordinates. For $\lambda \in (\ZZ^n)_+$, let $S_{n,\lambda} = \{w \in S_n \,|\, w \lambda = \lambda \}$ and $S_n^\lambda$ be the set of minimal length coset representatives in $S_n/S_{n,\lambda}$. 

	Let $w = w^\lambda w_\lambda$ be the factorization of $w$ with $w^\lambda \in S_n^\lambda$ and $w_\lambda \in S_{n,\lambda}$, according to \autoref{Snfactorization}. Since $\ell(w) = \ell(w^\lambda) + \ell(w_\lambda)$, using \eqref{TuveqTuTv}, $$T_w = T_{w^\lambda}T_{w_\lambda} \quad \hbox{ for }\, w\in S_n\,.$$ Therefore, \begin{equation}
		\one_0 = \one^\lambda \one_\lambda \,, 
	\end{equation} where \begin{equation}\label{1lambdadef}
		\one^\lambda = \sum_{w \in S_n^\lambda}T_w \qquad \hbox{ and } \qquad \one_\lambda = \sum_{w \in S_{n,\lambda}}T_w \, .
	\end{equation}
Note that \begin{equation}\label{eq:1lambda10}
	\one_\lambda \one_0 = W_\lambda(t)\one_0\,, \qquad \hbox{ for } \lambda \in (\ZZ_{\geq 0}^n)_+\,,
\end{equation}
where $$ W_\lambda(t) = \sum_{w \in S_{n,\lambda}} t^{\ell(w)}\,. $$ 

\subsection{Commutation relations between the $T$ and $X$ in $H$}	
	Define for $\mu = (\mu_1,\ldots,\mu_n) \in \ZZ^n$, $X^\mu = X_1^{\mu_1}\ldots X_n^{\mu_n}$. Then by \eqref{XiXjcommutes}, $X^\mu X^\nu = X^{\mu+\nu} = X^{\nu}X^\mu$ for $\mu,\nu \in \ZZ^n$.
	
	\begin{lemma}{\sc\cite[(4.3.15)]{MacAHAbook}}\label{Lusztig}
		Let $\mu \in \ZZ^n$. Then \begin{align*}
			T_i X^\mu - X^{s_i \mu} T_i = \dfrac{t-1}{1-X^{\alpha_i}}(X^\mu - X^{s_i\mu}) \,.
		\end{align*}
	\end{lemma}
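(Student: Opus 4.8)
The plan is to reduce the general identity to the three generator-level commutation relations between $T_i$ and $X_j$, and then bootstrap to arbitrary $\mu$ by a product-rule induction. First I would record the three base relations. For $j\notin\{i,i+1\}$ the relation $T_iX_j=X_jT_i$ is the defining relation \eqref{XaffHeckerelsF}. For the remaining two, starting from $T_iX_iT_i=tX_{i+1}$ and the identity $tT_i^{-1}=T_i+1-t$ of \eqref{tT_iinv}, I would multiply $T_iX_iT_i=tX_{i+1}$ by $T_i^{-1}$ on the right, respectively on the left, and substitute, to obtain
\begin{align*}
T_iX_i = X_{i+1}T_i+(1-t)X_{i+1}\,, \qquad\qquad T_iX_{i+1}=X_iT_i+(t-1)X_{i+1}\,.
\end{align*}
I would also note at the outset that the right-hand side of the lemma is a genuine element of $H$: since $\mu-s_i\mu=(\mu_i-\mu_{i+1})\alpha_i$, we have $X^\mu-X^{s_i\mu}=X^{s_i\mu}\bigl(X^{(\mu_i-\mu_{i+1})\alpha_i}-1\bigr)$, which is divisible by $1-X^{\alpha_i}$ in $\ZZ[t^{\pm1}][X_1^{\pm1},\ldots,X_n^{\pm1}]$.

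Writing $L(\mu)$ for the asserted identity, the base case $L(0)$ is the trivial statement $T_i-T_i=0$. The inductive engine is a product rule: assuming $L(\nu)$, I would multiply the equation on the right by $X_k$ and show that the result rearranges into $L(\nu+\varepsilon_k)$. When $k\notin\{i,i+1\}$ everything in sight commutes with $X_k$ and $s_i$ fixes $\varepsilon_k$, so the step is immediate. The substantive cases are $k=i$ and $k=i+1$: here one substitutes the two non-commuting relations above, uses $s_i(\nu+\varepsilon_{i+1})=s_i\nu+\varepsilon_i$ (respectively $s_i(\nu+\varepsilon_i)=s_i\nu+\varepsilon_{i+1}$) to identify the leading $X^{s_i(\nu+\varepsilon_k)}T_i$ term, and then checks that the leftover terms collapse correctly. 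After cancelling the common contributions, this collapse reduces to the single identity $X_{i+1}X^{\alpha_i}=X_i$ together with the manipulation $1-\tfrac{1}{1-X^{\alpha_i}}=\tfrac{-X^{\alpha_i}}{1-X^{\alpha_i}}$. This establishes $L(\mu)$ for all $\mu\in\ZZ_{\geq0}^n$ by induction on $\sum_k\mu_k$.

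Finally I would extend to arbitrary $\mu\in\ZZ^n$ using that $X_1\cdots X_n$ is central in $H$ (a short check from the defining relations, using $T_iX_j=X_jT_i$ for $j\notin\{i,i+1\}$ and the special case $T_iX_iX_{i+1}=X_iX_{i+1}T_i$). Given $\mu\in\ZZ^n$, pick $N$ with $\mu+N(1,\ldots,1)\in\ZZ_{\geq0}^n$; then $(X_1\cdots X_n)^{-N}$ is central, $s_i\bigl(\mu+N(1,\ldots,1)\bigr)=s_i\mu+N(1,\ldots,1)$, and multiplying $L\bigl(\mu+N(1,\ldots,1)\bigr)$ on the right by $(X_1\cdots X_n)^{-N}$ yields $L(\mu)$. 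I expect the main obstacle to be the bookkeeping in the two crucial inductive cases $k\in\{i,i+1\}$: one must verify that the extra terms produced by the non-commuting relations for $T_iX_i$ and $T_iX_{i+1}$ telescope against the rational factor $\tfrac{t-1}{1-X^{\alpha_i}}$ so as to reproduce exactly the rational factor for $\nu+\varepsilon_k$, and this is precisely where the identity $X_{i+1}X^{\alpha_i}=X_i$ does the work.
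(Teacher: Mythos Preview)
Your proposal is correct and follows essentially the same strategy as the paper: reduce the identity to the three generator-level cases $\mu=\varepsilon_j$ and bootstrap via a product/additivity argument. The only organizational differences are that the paper proves a single clean additivity step (true for $\mu$ and $\nu$ implies true for $\mu+\nu$) rather than your one-step induction $\nu\to\nu+\varepsilon_k$, and for negative exponents the paper uses a direct involution argument (true for $\mu$ implies true for $-\mu$, obtained by conjugating the known relation by $X^{-s_i\mu}$ and $X^{-\mu}$) instead of invoking centrality of $X_1\cdots X_n$; both routes are equally valid and equally short.
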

	\begin{proof}
		
%
		If the statement is true for $\mu, \nu \in \ZZ^n$ then it is true for $\mu+\nu$: \begin{align*}
			T_i X^{\mu +\nu} - X^{s_i(\mu+\nu)}T_i &= T_iX^{\mu+\nu}-X^{s_i\mu}T_iX^\nu + X^{s_i\mu}T_iX^\nu - X^{s_i(\mu+\nu)}T_i
			\\
			&= \dfrac{(t-1)(X^\mu-X^{s_i\mu})}{1-X^{\alpha_i}}X^\nu + X^{s_i\mu}\dfrac{(t-1)(X^\nu-X^{s_i\nu})}{1-X^{\alpha_i}}
			\\
			&= \dfrac{(t-1)(X^{\mu+\nu}-X^{s_i(\mu+\nu)})}{1-X^{\alpha_i}}\,.
		\end{align*}
		
		If the statement is true for $\mu \in \ZZ^n$ then it is true for $-\mu$: 
		\begin{align*}
			X^{s_i\mu}(T_iX^{-\mu} - X^{s_i(-\mu)}T_i)X^\mu &= X^{s_i\mu}T_i - T_iX^\mu = -\dfrac{t-1}{1-X^{\alpha_i}}(X^\mu - X^{s_i\mu})\,,
		\end{align*}
		which gives \begin{align*}
			T_iX^{-\mu} - X^{s_i(-\mu)}T_i = \dfrac{t-1}{1-X^{\alpha_i}}(X^{-\mu} - X^{s_i(-\mu)})\,.
		\end{align*}
		
		Therefore, it is enough to prove the statement for $\mu = \varepsilon_j$ for $j \in [n]$.	
		
		Suppose $j \notin \{i,i+1\}$. Then $s_i\varepsilon_j = \varepsilon_j$ and $T_iX_j - X_jT_i = 0$.
		
		Suppose $j = i$. Then \begin{align*}
			&T_iX_i - X_{i+1}T_i = T_iX_i - t^{-1}T_iX_iT_iT_i = T_iX_i - t^{-1}T_iX_i((t-1)T_i+t) 
			\\
			&=T_iX_i - t^{-1}(t-1)T_iX_iT_i - T_iX_i = -(t-1)X_{i+1} = \dfrac{t-1}{1-X^{\alpha_i}}(X_i-X_{i+1})\,.
		\end{align*}
		
		Suppose $j=i+1$. Then \begin{align*}
			T_iX_{i+1}-X_iT_i &= T_it^{-1}T_iX_iT_i - X_iT_i = t^{-1}((t-1)T_i+t)X_iT_i - X_iT_i 
			\\
			&= (t-1)X_{i+1} = \dfrac{t-1}{1-X^{\alpha_i}}(X_{i+1}-X_i)\,.
		\end{align*}
	\end{proof}
	
	\begin{corollary}\label{TiXmu}
		Let $\mu \in \ZZ^n$. Then 
			\item[(a)] if $(\mu , \alpha_i) = 1$ then $T_iX^\mu T_i = tX^{s_i\mu}$\,,
			\item[(b)] if $(\mu ,\alpha_i) = 0$ then $T_iX^\mu  = X^{\mu}T_i$\,.
	\end{corollary}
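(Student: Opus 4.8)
The plan is to derive both statements directly from \autoref{Lusztig}, using the action formula \eqref{siactZn}, namely $s_i\mu = \mu - (\mu,\alpha_i)\alpha_i$, to evaluate $s_i\mu$ and simplify the right-hand side in each case.

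For part (b), if $(\mu,\alpha_i)=0$ then \eqref{siactZn} gives $s_i\mu=\mu$, so the factor $X^\mu - X^{s_i\mu}$ on the right-hand side of \autoref{Lusztig} vanishes. Hence $T_iX^\mu - X^{s_i\mu}T_i = 0$, which reads $T_iX^\mu = X^\mu T_i$ since $s_i\mu=\mu$.

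For part (a), if $(\mu,\alpha_i)=1$ then $s_i\mu=\mu-\alpha_i$, so $X^\mu = X^{s_i\mu}X^{\alpha_i}$ and therefore $X^\mu - X^{s_i\mu} = -X^{s_i\mu}(1-X^{\alpha_i})$. This cancels the denominator in \autoref{Lusztig} exactly, collapsing the right-hand side to $-(t-1)X^{s_i\mu}$ and giving $T_iX^\mu = X^{s_i\mu}\bigl(T_i-(t-1)\bigr)$. Multiplying on the right by $T_i$ and substituting the quadratic relation $T_i^2 = (t-1)T_i + t$ from \eqref{quadTi} yields $T_iX^\mu T_i = X^{s_i\mu}\bigl(T_i^2 - (t-1)T_i\bigr) = tX^{s_i\mu}$, as required.

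The argument is essentially mechanical, so there is no serious obstacle; the only point deserving attention is that the factor $\frac{t-1}{1-X^{\alpha_i}}$ in \autoref{Lusztig} is shorthand for an exact division, and one must check that the numerator $X^\mu - X^{s_i\mu}$ really is divisible by $1-X^{\alpha_i}$ before cancelling. The factorization $X^\mu - X^{s_i\mu} = -X^{s_i\mu}(1-X^{\alpha_i})$ used above makes this divisibility manifest, so the simplification is legitimate and the quadratic relation does the rest.
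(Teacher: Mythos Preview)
Your proof is correct and follows essentially the same route as the paper: both derive $T_iX^\mu = X^{s_i\mu}(T_i+1-t)$ from \autoref{Lusztig} and then multiply on the right by $T_i$. The only cosmetic difference is that the paper invokes the identity $tT_i^{-1} = T_i+1-t$ from \eqref{tT_iinv} to write this as $T_iX^\mu = X^{s_i\mu}\,tT_i^{-1}$ before multiplying, whereas you apply the quadratic relation \eqref{quadTi} directly; these are the same step.
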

	
	\begin{proof}
		By \autoref{Lusztig}, if $(\mu,\alpha_i) \geq 0$ then
		$$ T_i X^\mu = X^{s_i\mu}T_i + (1-t)\sum_{j=1}^{(\mu,\alpha_i)} X^{\mu-j\alpha_i} \,.$$
		
			\item[(a)] Let $(\mu,\alpha_i) = 1$. Then $s_i\mu = \mu-(\mu,\alpha_i)\alpha_i = \mu - \alpha_i$\,. 
			
			Then using \eqref{tT_iinv}, \begin{align*}
				T_i X^\mu = X^{\mu- \alpha_i}(T_i+1-t) = X^{s_i\mu}tT_i^{-1}\,.
			\end{align*}
			
			\item[(b)] Let $(\mu,\alpha_i) = 0$. Then $s_i\mu = \mu$, so $$ T_iX^\mu = X^{s_i\mu}T_i \,.$$
		
	\end{proof}
	
	\begin{corollary}\label{TwXmucommutesifwfixesmu}
		Let $\mu \in \ZZ^n$ and $w \in S_{n}$ such that $w \mu = \mu$. Then $T_w X^\mu = X^\mu T_w$. 
	\end{corollary}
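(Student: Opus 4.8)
The plan is to reduce the statement to the simple reflections lying in the stabilizer of $\mu$ and then to invoke \autoref{TiXmu}(b). Set $S_{n,\mu} = \{v \in S_n : v\mu = \mu\}$, so that by hypothesis $w \in S_{n,\mu}$. The conceptual input I would use is that $S_{n,\mu}$ is generated by the simple transpositions it contains, namely by $\{s_i : s_i\mu = \mu\}$; and by \eqref{siactZn} a simple reflection $s_i$ fixes $\mu$ exactly when $(\mu,\alpha_i)=0$. This is the standard-parabolic description of the stabilizer recorded in \S\ref{subsec:parabolicsubgrquotient}: when the repeated entries of $\mu$ are contiguous --- in particular for the dominant weights $\mu$ relevant to our applications --- the group $S_{n,\mu}$ is exactly a Young subgroup $S_{[a_1,b_1]}\times\cdots\times S_{[a_m,b_m]}$, and each factor is generated by the adjacent transpositions $s_i$ with $\mu_i=\mu_{i+1}$.

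Granting this, I would next pick a reduced expression $w = s_{i_1}\cdots s_{i_k}$ using only generators from $\{s_i : s_i\mu=\mu\}$. Since a word that is reduced inside the parabolic subgroup $S_{n,\mu}$ remains reduced in $S_n$, we get $\ell(w)=k$, and therefore $T_w = T_{i_1}\cdots T_{i_k}$ by the definition of $T_w$ together with the length-additivity relation \eqref{TuveqTuTv}. Now each generator satisfies $(\mu,\alpha_{i_j})=0$, so \autoref{TiXmu}(b) gives $T_{i_j}X^\mu = X^\mu T_{i_j}$ for every $j$. Commuting $X^\mu$ successively past $T_{i_k},\ldots,T_{i_1}$ then yields
\[
T_w X^\mu = T_{i_1}\cdots T_{i_k}\,X^\mu = X^\mu\, T_{i_1}\cdots T_{i_k} = X^\mu T_w,
\]
which is the desired identity.

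The only genuinely nontrivial step is the first one: that $S_{n,\mu}$ is generated by the simple reflections fixing $\mu$, for once a reduced word built from such generators is available the rest is purely formal manipulation in $H$. I expect to justify it directly from the combinatorics of $S_{n,\mu}$ viewed as coordinate permutations: a permutation fixing $\mu$ preserves each level set $\{i : \mu_i = c\}$, and when these level sets are intervals the subgroup they generate is the corresponding product of symmetric groups on consecutive indices, each generated by its adjacent transpositions $s_i$ with $\mu_i=\mu_{i+1}$. This is exactly where the shape of $\mu$ matters, since it is precisely the hypothesis that makes $S_{n,\mu}$ a \emph{standard} parabolic and hence lets one replace the abstract factorization $T_w=T_uT_v$ by a commuting product of the specific $T_{i_j}$ handled by \autoref{TiXmu}(b).
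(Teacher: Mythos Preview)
Your argument is essentially identical to the paper's: reduce to simple reflections in $S_{n,\mu}$ via the standard-parabolic description of the stabilizer, then apply \autoref{TiXmu}(b) to each factor. Your explicit caveat that this step requires the level sets of $\mu$ to be intervals (so that $S_{n,\mu}$ is genuinely generated by the simple reflections it contains) is well taken --- the paper's proof relies on exactly the same fact without flagging it, and indeed the literal statement for arbitrary $\mu\in\ZZ^n$ fails (e.g.\ $w=s_1s_2s_1$ fixes $\varepsilon_2$ but $T_w$ does not commute with $X_2$), so restricting to dominant $\mu$, as in all the applications, is the right move.
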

	
	\begin{proof}
		Since $S_{n,\mu}$ is a subgroup with generators $\{s_i \,|\, s_i \in S_{n,\mu} , i \in [n-1]\}$, the element $w \in S_{n,\mu}$ can be uniquely factored as a product of $s_i$s, each of which fixes $\mu$. Then use \autoref{TiXmu}[(b)].  
	\end{proof}

		\begin{proposition}\label{TuFXomegak}
			Let $I \in B(\varpi_k)$ for some $k\in [n]$. Then as elements of $H$, $$T_{u_I} X^{\varpi_k} = X^I t^{\ell(u_I)}(T_{u_I^{-1}})^{-1}\,.$$ 
		\end{proposition}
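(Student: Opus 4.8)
The plan is to fix a reduced expression $u_I = s_{i_1}\cdots s_{i_\ell}$ with $\ell = \ell(u_I)$, and to push the factor $X^{\varpi_k}$ leftward through the product $T_{u_I} = T_{i_1}\cdots T_{i_\ell}$ one generator at a time, starting from the innermost $T_{i_\ell}$. The engine for each step is \autoref{TiXmu}(a): whenever a weight $\mu$ satisfies $(\mu,\alpha_i) = 1$, one has $T_i X^\mu = X^{s_i\mu}\, t\, T_i^{-1}$. The whole point is that \autoref{omegak1} guarantees this hypothesis is met at every stage.

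Concretely, I would set $\mu^{(m)} = s_{i_{m+1}}\cdots s_{i_\ell}\varpi_k$ for $0 \le m \le \ell$, so that $\mu^{(\ell)} = \varpi_k$, $\mu^{(m-1)} = s_{i_m}\mu^{(m)}$, and $\mu^{(0)} = u_I\varpi_k = \vec I$ (recall $X^I = X^{\vec I}$). Processing $m = \ell, \ell-1,\ldots,1$ in turn, at the $m$th step the exponent sitting immediately to the right of $T_{i_m}$ is $\mu^{(m)}$, and \autoref{omegak1} (with $j=m$) gives $(\mu^{(m)},\alpha_{i_m}) = 1$. Applying \autoref{TiXmu}(a) replaces $T_{i_m}X^{\mu^{(m)}}$ by $X^{\mu^{(m-1)}}\,t\,T_{i_m}^{-1}$, moving the monomial one slot left and depositing a factor $t\,T_{i_m}^{-1}$ on the right. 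After all $\ell$ steps I obtain
\[
T_{u_I}X^{\varpi_k} = X^{\vec I}\,(t\,T_{i_1}^{-1})\cdots(t\,T_{i_\ell}^{-1}) = X^I\, t^{\ell}\, T_{i_1}^{-1}\cdots T_{i_\ell}^{-1}.
\]

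To finish, I would identify the trailing product of inverses. Since the reversal of a reduced word is again reduced, $s_{i_\ell}\cdots s_{i_1}$ is a reduced expression for $u_I^{-1}$, so $T_{u_I^{-1}} = T_{i_\ell}\cdots T_{i_1}$ and hence $T_{i_1}^{-1}\cdots T_{i_\ell}^{-1} = (T_{i_\ell}\cdots T_{i_1})^{-1} = (T_{u_I^{-1}})^{-1}$. Substituting this yields exactly $T_{u_I}X^{\varpi_k} = X^I\,t^{\ell(u_I)}\,(T_{u_I^{-1}})^{-1}$.

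The computation is routine once the setup is fixed; the only point requiring care — and the real content of the statement — is the verification that the pairing $(\mu^{(m)},\alpha_{i_m}) = 1$ holds at every stage, so that the clean relation \autoref{TiXmu}(a) applies rather than the general Lusztig relation of \autoref{Lusztig} (which would leave extra correction terms). This is precisely what \autoref{omegak1} supplies, using $(\varpi_k,\alpha) \in \{0,1\}$ for all positive roots $\alpha$; it is also what makes the exponents telescope cleanly along $\varpi_k = \mu^{(\ell)} \to \cdots \to \mu^{(0)} = \vec I$. The reduced-word reversal in the last step is a minor bookkeeping matter.
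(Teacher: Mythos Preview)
Your proof is correct and follows essentially the same approach as the paper: fix a reduced expression for $u_I$, use \autoref{omegak1} to ensure the hypothesis of \autoref{TiXmu}(a) holds at each step, and push $X^{\varpi_k}$ through one generator at a time. Your write-up is in fact more explicit than the paper's (which leaves the identification $T_{i_1}^{-1}\cdots T_{i_\ell}^{-1} = (T_{u_I^{-1}})^{-1}$ implicit), but the argument is the same.
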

		
		\begin{proof}
			Let $u_I = s_{i_1}\ldots s_{i_\ell}$ be a reduced expression. By \autoref{omegak1}, $ (s_{i_{j+1}}\ldots s_{i_\ell} \varpi_k , \alpha_{i_j} ) = 1$ for every $j \in [\ell]$. Then applying \autoref{TiXmu}, $T_{i_j}X^{s_{i_{j+1}}\ldots s_{i_\ell} \varpi_k} = X^{s_{i_j}s_{i_{j+1}}\ldots s_{i_\ell} \varpi_k} tT_{i_j}^{-1}$. Then \begin{align*}
				T_{u_I}X^{\varpi_k} = T_{i_1}\ldots T_{i_\ell}X^{\varpi_k} = T_{i_1}\ldots T_{i_{\ell-1}}X^{s_{i_\ell}\varpi_k} tT_{i_\ell}^{-1} = \ldots = X^{s_{i_1}\ldots s_{i_\ell}\varpi_k} t^{\ell}(T_{u_I^{-1}})^{-1}\,.
			\end{align*}  
		\end{proof}
		
		
		Using \autoref{TuFXomegak} gives \begin{align}\label{10Xomegak}
			\one_0 X^{\varpi_k} &= \one^{\varpi_k}\one_{\varpi_k}X^{\varpi_k} = \one^{\varpi_k}X^{\varpi_k}\one_{\varpi_k} = \sum_{I \in B(\varpi_k)} X^{I}t^{\ell(u_I)}(T_{u_I^{-1}})^{-1}\one_{\varpi_k}\,.
		\end{align}

\begin{remark}
	The calculation \eqref{10Xomegak} is central to our work. Tracing back its proof, this uses \autoref{TuFXomegak}, which in turn uses \autoref{omegak1}. The statement of \autoref{omegak1} does not hold true for other types, it uses the fact that all the fundamental weights in type $GL_n$ are minuscule weights.
\end{remark}		
		
		\subsection{The Satake Isomorphism} 
		
%
		\begin{theorem}{\sc\cite[(4.2.6)]{MacAHAbook})}
			The affine Hecke algebra $H$ is a free $\ZZ[t^{\pm 1}]$-module with basis $\{ X^{\alpha}T_w \,|\, \alpha \in \ZZ^n \,, w \in S_n \}$\,. 
		\end{theorem}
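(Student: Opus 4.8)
The plan is to prove the two halves of the basis statement separately: that $\{X^{\alpha}T_w : \alpha\in\ZZ^n,\ w\in S_n\}$ spans $H$ over $\ZZ[t^{\pm1}]$, and that it is linearly independent. The cleanest way to package this is to exhibit a $\ZZ[t^{\pm1}]$-module map between $\mathcal{A}_0 \otimes_{\ZZ[t^{\pm1}]} H_n$ and $H$, where $\mathcal{A}_0 = \ZZ[t^{\pm1}][X_1^{\pm1},\ldots,X_n^{\pm1}]$ is the commutative Laurent subalgebra and $H_n = \langle T_1,\ldots,T_{n-1}\rangle$ is the finite Hecke subalgebra. The multiplication map $X^{\alpha}\otimes T_w \mapsto X^{\alpha}T_w$ is the candidate isomorphism: spanning says it is surjective, independence says it is injective.

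Spanning is the routine half. First, any product $T_{i_1}\cdots T_{i_k}$ reduces to a $\ZZ[t^{\pm1}]$-combination of the $T_w$ by repeated use of \eqref{T_iT_w}, so $H_n$ is spanned by $\{T_w : w\in S_n\}$, and any product of the $X_j^{\pm 1}$ collapses to a single $X^{\alpha}$ by \eqref{XiXjcommutes}. It then remains to move every $T$ to the right of every $X$. The tool is \autoref{Lusztig}, which gives $T_iX^{\mu} = X^{s_i\mu}T_i + \tfrac{t-1}{1-X^{\alpha_i}}(X^{\mu}-X^{s_i\mu})$, where the fraction is a genuine element of $\mathcal{A}_0$ because $X^{\mu}-X^{s_i\mu}$ is divisible by $1-X^{\alpha_i}$. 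Iterating this pushes any $T_i$ past any $X^{\mu}$ at the cost of a term with one fewer factor $T_i$, and a straightforward induction on word length rewrites an arbitrary element of $H$ as a $\ZZ[t^{\pm1}]$-combination of the $X^{\alpha}T_w$.

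For independence I would construct a faithful representation of regular type. As input I first record the classical fact that $H_n$ is free with basis $\{T_w : w\in S_n\}$; this is itself proved by letting $H_n$ act on the free module $\bigoplus_{w}\ZZ[t^{\pm1}]\,v_w$ via $T_iv_w = v_{s_iw}$ if $s_iw>w$ and $T_iv_w = (t-1)v_w + tv_{s_iw}$ otherwise, checking \eqref{quadTi}--\eqref{braid2}, and noting $T_wv_e = v_w$. Now set $V = \mathcal{A}_0\otimes_{\ZZ[t^{\pm1}]}H_n$, which is free with basis $\{X^{\alpha}\otimes T_w\}$, and define operators
\[
  \rho(X_j)(f\otimes h) = X_j f\otimes h,\qquad
  \rho(T_i)(f\otimes h) = s_if\otimes T_ih + \frac{t-1}{1-X^{\alpha_i}}\,(f-s_if)\otimes h,
\]
the correction term again lying in $\mathcal{A}_0$ by divisibility. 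These formulas encode exactly the normal-ordering rules from the spanning step, so once $\rho$ is known to be a representation one checks that $\rho(T_i)(1\otimes 1)=1\otimes T_i$ and $\rho(X_j)(1\otimes 1)=X_j\otimes 1$, whence $\rho(X^{\alpha}T_w)(1\otimes 1)=X^{\alpha}\otimes T_w$. Consequently any relation $\sum c_{\alpha,w}X^{\alpha}T_w = 0$ in $H$, applied to the cyclic vector $1\otimes 1$, yields $\sum c_{\alpha,w}X^{\alpha}\otimes T_w = 0$ in $V$, forcing every $c_{\alpha,w}=0$ since $\{X^{\alpha}\otimes T_w\}$ is a basis of $V$. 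Together with spanning this proves the theorem.

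The main obstacle is the verification that $\rho$ really is a representation of $H$, namely that $\rho(T_i)$ and $\rho(X_j)$ satisfy the quadratic relation \eqref{quadTi}, the braid relations \eqref{braid1}--\eqref{braid2}, and the mixed relations $T_iX_iT_i = tX_{i+1}$ and $T_iX_j = X_jT_i$ for $j\notin\{i,i+1\}$. The purely $X$ relation and the commuting mixed relation are immediate, since multiplication by $X_j$ commutes with $s_i$ and with the divided-difference operator $f\mapsto \tfrac{t-1}{1-X^{\alpha_i}}(f-s_if)$ when $j\notin\{i,i+1\}$. The remaining relations are genuine computations: the quadratic relation reduces to the identities $D_i^2=(t-1)D_i$ and $s_iD_i = D_i + (t-1)(s_i-1)$ for the correction operator $D_i$, the braid relations require tracking $D_i$ against the left $H_n$-action coupled across the two tensor factors, and the relation $\rho(T_i)\rho(X_i)\rho(T_i)=t\,\rho(X_{i+1})$ hinges on the key simplification $X_{i+1}\,s_i(D_if)=X_iD_if$ together with $D_i(X_iD_if)=0$. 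These consistency checks are the real content of the argument; everything else is bookkeeping.
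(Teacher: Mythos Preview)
The paper does not supply its own proof of this theorem; it simply quotes the result from \cite[(4.2.6)]{MacAHAbook} and uses it as a black box. So there is nothing to compare against on the paper's side.

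Your outline is the standard PBW-type argument for the Bernstein presentation and is correct in spirit. Spanning via \autoref{Lusztig} is exactly right. For independence, the representation you write down on $\mathcal{A}_0\otimes H_n$ is the usual one, and your cyclic-vector trick is the clean way to extract injectivity once $\rho$ is known to be a homomorphism. The two auxiliary identities you isolate are correct: from $s_i(D_if)=X^{\alpha_i}D_if$ one gets both $X_{i+1}\,s_i(D_if)=X_iD_if$ and the $s_i$-invariance of $X_iD_if$, hence $D_i(X_iD_if)=0$.

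The one place where your sketch is genuinely thin is the braid relation \eqref{braid1} for $\rho(T_i)$. Writing $\rho(T_i)=s_i\otimes T_i + D_i\otimes 1$ (with the understanding that $s_i$ and $D_i$ act on $\mathcal{A}_0$ and $T_i$ acts on $H_n$), the cross terms in $\rho(T_i)\rho(T_{i+1})\rho(T_i)$ involve products like $s_iD_{i+1}s_i$ and $D_is_{i+1}D_i$ that must be matched against their counterparts on the other side; this is where the real computation lives, and it does not reduce to a one-line identity. It is done in full in Macdonald's book and in Lusztig's original papers, so your deferral is acceptable provided you point to such a source rather than leaving it as an exercise.
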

		
		Since $T_w \one_0 = t^{\ell(w)}\one_0$, we have $H\one_0 \subseteq \ZZ[t^{\pm 1}][X_1^{\pm 1},\ldots,X_n^{\pm 1}]\one_0$. In fact, since for a polynomial $f(X) = f(X_1,\ldots,X_n) \in \ZZ[t^{\pm 1}][X_1,\ldots,X_n]$, we have $f(X)\one_0 \neq 0$, we can identify $f(X)$ with its image in $H\one_0$ as $f(X)\one_0$. So $$ H\one_0 = \ZZ[t^{\pm 1}][X_1^{\pm 1},\ldots,X_n^{\pm 1}]\one_0 \cong \ZZ[t^{\pm 1}][X_1^{\pm 1},\ldots,X_n^{\pm 1}]\,, $$ where the isomorphism is $f(X)\one_0 \mapsto f(X)$.
		
		\begin{theorem}{\sc\cite[Theorem 1.4, Theorem 2.4]{NelsenRam}} 
			The above map restricts to the following isomorphism. $$ \one_0 H \one_0 = Z(H)\one_0 = \ZZ[t^{\pm 1}][X_1^{\pm1},\ldots, X_n^{\pm 1}]^{S_n}\one_0 \cong \ZZ[t^{\pm 1}][X_1^{\pm1},\ldots, X_n^{\pm 1}]^{S_n} = Z(H) \,,$$ where $Z(H)$ denotes the center of the affine Hecke algebra $H$.
		\end{theorem}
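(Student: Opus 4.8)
The plan is to split the chain into three independent statements: the Bernstein description $Z(H)=\ZZ[t^{\pm1}][X_1^{\pm1},\ldots,X_n^{\pm1}]^{S_n}$, which gives the last equality and, after right multiplication by $\one_0$, the middle one; the equality $\one_0H\one_0=Z(H)\one_0$; and the isomorphism $\ZZ[t^{\pm1}][X]^{S_n}\one_0\cong\ZZ[t^{\pm1}][X]^{S_n}$. The last is immediate, being the restriction of the isomorphism $H\one_0\cong\ZZ[t^{\pm1}][X_1^{\pm1},\ldots,X_n^{\pm1}]$, $f(X)\one_0\mapsto f(X)$, established above to the symmetric part; this restriction is injective with image exactly the symmetric Laurent polynomials.

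For the Bernstein description I would argue by two inclusions. The inclusion $\ZZ[t^{\pm1}][X]^{S_n}\subseteq Z(H)$ is easy: a symmetric $p(X)$ commutes with every $X_j$ by \eqref{XiXjcommutes}, and \autoref{Lusztig} gives $T_ip(X)-p(X)T_i=\frac{t-1}{1-X^{\alpha_i}}(p(X)-p(s_iX))=0$ because $s_ip=p$, so $p(X)$ also commutes with each $T_i$. For the reverse inclusion, take $z\in Z(H)$ and write $z=\sum_w f_w(X)T_w$ in the basis. Iterating \autoref{Lusztig} shows $T_wX^\mu=X^{w\mu}T_w+\sum_{\ell(v)<\ell(w)}c_v(X)T_v$; comparing the coefficient of $T_w$ on the two sides of $X^\mu z=zX^\mu$ for a $w$ of maximal length with $f_w\neq0$ then forces $f_w(X)(X^{w\mu}-X^\mu)=0$. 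Choosing $\mu$ with $w\mu\neq\mu$ and using that $\ZZ[t^{\pm1}][X^{\pm1}]$ is an integral domain gives $w=e$, so $z=f_e(X)$; commuting $f_e(X)$ with the $T_i$ and using \autoref{Lusztig} once more forces $s_if_e=f_e$ for all $i$, i.e.\ $f_e$ is symmetric.

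The equality $\one_0H\one_0=Z(H)\one_0$ is the substantive part. For $\subseteq$, the identity $T_w\one_0=t^{\ell(w)}\one_0$ shows that $\one_0H\one_0$ is spanned over $\ZZ[t^{\pm1}]$ by the elements $a_\mu:=\one_0X^\mu\one_0$, and $T_i\one_0=t\one_0$ gives $T_ia_\mu=t\,a_\mu$. Writing $a_\mu=g_\mu(X)\one_0$ via the isomorphism above, the relation $T_ia_\mu=t\,a_\mu$ together with \autoref{Lusztig} collapses to $(g_\mu-s_ig_\mu)(tX^{\alpha_i}-1)=0$ in the domain $\ZZ[t^{\pm1}][X^{\pm1}]$, forcing $g_\mu$ to be symmetric; hence $a_\mu\in\ZZ[t^{\pm1}][X]^{S_n}\one_0=Z(H)\one_0$. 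For $\supseteq$, given $z\in Z(H)$ I would use centrality of $z$ and $\one_0^2=W_0(t)\one_0$ to write $\one_0z\one_0=z\one_0^2=W_0(t)\,z\one_0$, whence $z\one_0=W_0(t)^{-1}\one_0z\one_0\in\one_0H\one_0$ as soon as $W_0(t)$ is invertible.

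I expect the main obstacle to be exactly this last step. The element $W_0(t)=\sum_{w\in S_n}t^{\ell(w)}$ is not a unit in $\ZZ[t^{\pm1}]$, and over $\ZZ[t^{\pm1}]$ the inclusion $\one_0H\one_0\subseteq Z(H)\one_0$ is in fact strict: for $n=2$ one checks directly that $X_1X_2\one_0=P_{(1,1)}\one_0$ lies in $Z(H)\one_0$ but not in $\one_0H\one_0$, the obstruction being that $\one_0X^\lambda\one_0$ maps to $W_\lambda(t)P_\lambda$ with $W_\lambda(t)$ a nonunit. The equality in the theorem must therefore be read after inverting $W_0(t)$ --- equivalently, over a base field in which $W_0(t)$, and hence every $W_\lambda(t)$, is nonzero --- and it is precisely this invertibility that produces the surjectivity of the Satake map. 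The only remaining point needing care is the length-filtration bookkeeping in the reverse inclusion for the center, where one isolates the top $T_w$-coefficient after iterating \autoref{Lusztig}.
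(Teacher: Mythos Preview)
The paper does not prove this theorem: it is quoted from \cite{NelsenRam} without argument, so there is no in-paper proof to compare against. Your write-up is essentially the standard proof of the Satake isomorphism (Bernstein's description of $Z(H)$ via \autoref{Lusztig}, plus the idempotent-like behaviour of $\one_0$), and the individual steps you give for $Z(H)=\ZZ[t^{\pm1}][X^{\pm1}]^{S_n}$ and for the inclusion $\one_0H\one_0\subseteq Z(H)\one_0$ are correct.

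You are also right to flag the reverse inclusion. Over $\ZZ[t^{\pm1}]$ the equality $\one_0H\one_0=Z(H)\one_0$ as literally written is too strong: your $n=2$ example with $X_1X_2\one_0$ is on point, since $\one_0X_1X_2\one_0=(1+t)X_1X_2\one_0$ and one cannot divide by $1+t$. The statement should be read after localizing so that $W_0(t)$ (hence each $W_\lambda(t)$) becomes invertible, which is the setting in which Nelsen--Ram work; the paper itself silently uses this when writing $\dfrac{1}{W_\lambda(t)}\one_0X^\lambda\one_0$ in \eqref{Plambdadef}. So your diagnosis of the obstacle is accurate, and with that caveat your argument is complete. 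The only cosmetic point: in the length-filtration step for $Z(H)\subseteq\ZZ[t^{\pm1}][X^{\pm1}]^{S_n}$ you should pick $\mu$ regular (e.g.\ strictly dominant) so that $w\mu\neq\mu$ is guaranteed for $w\neq e$.
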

		

	\section{The Hall-Littlewood Polynomials}

In this section we recall the definition of Hall-Littlewood polynomials and describe Macdonald's formula for the monomial expansion of these polynomials. We then rederive a recursive way to compute the monomial expansion, using ideas of Klostermann. 

\subsection{The Hall-Littlewood polynomials}

Recall the notations in \S\ref{subsec:parabolicsubgrquotient} and let \begin{equation}\label{Wlambdadef}
	W_\lambda(t) = \sum_{w \in S_{n,\lambda}}t^{\ell(w)} \,, \qquad \hbox{ for } \lambda \in (\ZZ^n)_+\,.
\end{equation}

The Hall-Littlewood $P$-polynomials are $P_\lambda(t) \in \ZZ[t^{\pm 1}][X_1^{\pm1},\ldots,X_n^{\pm1}]^{S_n}$ for $\lambda \in (\ZZ^n)_{+}$ defined by \begin{equation}\label{Plambdadef}
	P_\lambda(t) \one_0 = \dfrac{1}{W_\lambda(t)} \one_0 X^\lambda \one_0 \,,\qquad \hbox{ for } \lambda \in (\ZZ^n)_+ \,.
\end{equation}

For a proof that $P_\lambda(t) \in \ZZ[t^{\pm 1}][X_1^{\pm1},\ldots,X_n^{\pm1}]^{S_n}$ see \cite{NelsenRam}. 

Since $w \varpi_n = \varpi_n$ for every $w \in S_n$, by \autoref{TwXmucommutesifwfixesmu} we have $T_w X_1 \ldots X_n = X_1 \ldots X_n T_w$ for every $w \in S_n$. Therefore \begin{equation}
	P_{(\lambda_1+k,\ldots,\lambda_n+k)}(t) = (X_1\ldots X_n)^k P_\lambda(t) \,, \qquad \hbox{ for } \lambda \in (\ZZ^n)_+ \hbox{ and } k \in \ZZ\,. 
\end{equation} 

For the rest of the article we only focus on $P_\lambda(t)$ where $\lambda \in (\ZZ_{\geq 0}^n)_+$. In this case $P_\lambda(t) \in \ZZ[t][X_1,\ldots,X_n]^{S_n}$. For such $\lambda$s, Macdonald obtained a monomial expansion formula as a sum over semistandard Young tableaux, which we describe in the next subsections.

\subsection{Tableaux}\label{tableauxdef}

Let $\lambda \in (\ZZ_{\geq 0}^n)_+$. 

The \textit{Young diagram} of $\lambda$ is the upper-left justified arrangement of boxes with $\lambda_i$ many boxes in the $i$th row from top. We can assign a co-ordinate to each box of the diagram so that $(i,j)$ denotes the box in $i$th row from the top and $j$th column from the left.

The \textit{length} of $\lambda$, $\ell(\lambda)$, is the length of the first column of the diagram of $\lambda$.


A \textit{filling} of $\lambda$ is an assignment of numbers from $[n]$ to the boxes of the Young diagram of $\lambda$. Identifying $\lambda$ with the collection of boxes in its Young diagram, a filling $T$ of $\lambda$ is a function $T:\lambda \mapsto [n]$.

A \textit{semistandard Young tableau} of shape $\lambda$ is a filling of $\lambda$ such that the entries are weakly increasing along each row from left-to-right and the entries are strictly increasing along each column from top-to-bottom. The set of semistandard Young tableaux of shape $\lambda$ is denoted by $B(\lambda)$. 

\begin{example}
	Let $n = 3$. The semistandard Young tableaux of shape $(2,1,0)$ are \begin{align*}\ytableausetup{nosmalltableaux}
		\ytableaushort{11,2}\,, \quad \ytableaushort{11,3}\,, \quad \ytableaushort{12,2}\,, \quad \ytableaushort{12,3}\,,\quad \ytableaushort{13,2}\,,\quad\ytableaushort{13,3}\,,\quad \ytableaushort{22,3}\,,\quad \ytableaushort{23,3}\,.
	\end{align*}
\end{example}



Suppose $\lambda, \mu \in (\ZZ_{\geq 0}^n)_+$ are such that the smallest column length of $\lambda$ is at-least as big as the biggest column length of $\mu$. In other words, if $\lambda = \varpi_{a_p}+\ldots + \varpi_{a_1}$ and $\mu = \varpi_{b_q} + \ldots + \varpi_{b_1}$ with $n \geq a_{p} \geq \ldots \geq a_1 \geq 1$ and $n \geq b_{q} \geq \ldots  \geq b_1 \geq 1$ then $a_1 \geq b_{q}$. If $T_1$ and $T_2$ are fillings of $\lambda$ and $\mu$ respectively then we write $T_1 \otimes T_2$ to denote the filling of $\lambda+\mu$ obtained by filling the first $p$ columns by $T_1$ and the next $q$ columns by $T_2$. For $\lambda, \mu$ satisfying the condition above let \begin{equation}
	B(\lambda) \otimes B(\mu) = \{ T_1 \otimes T_2 \,| \, T_1 \in B(\lambda) \hbox{ and } T_2 \in B(\mu) \}\,.
\end{equation} 

Note that even if $T_1,T_2$ are semistandard, $T_1 \otimes T_2$ may not be semistandard.

\begin{example}
	Let $T_1 = \ytableaushort{1,3,4}$\, and \, $T_2 = \ytableaushort{1,2}$\,. Then $T_1 \otimes T_2 = \ytableaushort{11,32,4}$\,.
\end{example}

Let $\lambda = \varpi_{a_p} + \ldots + \varpi_{a_1}$, with $n \geq a_p \geq \ldots \geq a_1 \geq 1$. With the above notations, we can write \begin{equation}
	B(\lambda) \subseteq B(\varpi_{a_p}) \otimes \ldots \otimes B(\varpi_{a_1}) \,.
\end{equation}

\subsection{Macdonald's formula}\label{sec:Macformula}

For two elements $\mu, \lambda \in (\ZZ_{\geq 0}^n)_+$ write $\mu \subseteq \lambda$ if $\lambda_i \geq \mu_i$ for all $i \in [n]$. The set theoretic difference of the Young diagrams $\lambda - \mu$ is called a skew diagram. If $\lambda_1 \geq \mu_1 \geq \lambda_2 \geq \mu_2 \geq \ldots $ then we call $\lambda - \mu$ a horizontal strip.

A semistandard tableau $T$ of shape $\lambda$ may be written as a flag of partitions $\GT(T) = (T_{\leq 1} \subseteq T_{\leq 2} \subseteq \ldots \subseteq T_{\leq n})$ where $T_{\leq i}$ is the shape with fillings $\leq i$. Then $T_{\leq i+1} - T_{\leq i}$ is a horizontal strip for each $i \in [n-1]$.

Let $\theta = \lambda - \mu$ be a horizontal strip. Let $$J = \{j\in \ZZ_{\geq 1}\,|\, \theta'_j < \theta'_{j+1} \} = \{j\in \ZZ_{\geq 1}\,|\, \theta'_j = 0, \theta'_{j+1}=1 \}\,,$$ where $\theta'_i$ denotes the length of the $i$th column of $\theta$.
Then define $$ \psi_{\lambda/\mu} = \prod_{j \in J}(1-t^{m_j(\mu)}) \,,$$ where $m_j(\mu) = \# \{ i \in [n]\,|\, \mu_i = j \}$.

Let $T$ be a semi-standard Young tableau of shape $\lambda$, with corresponding sequence $\GT(T)$ as above. Let $$ \psi_T = \prod_{i = 1}^{n-1} \psi_{T_{\leq i+1}/T_{\leq i}} \,.$$ 


For a tableau $T:\lambda \to [n]$, define $X^T = \prod\limits_{b \in \lambda} X_{T(b)} = \prod\limits_{i = 1}^{n} X_i^{c_i}$, where $c_i = \# T^{-1}(i)$, i.e, the number of $i$ in $T$. 

Then \begin{theorem}{\sc\cite[Chapter III, (5.11') ]{MacMainBook}}\label{ThMacformula}
	\begin{equation}
		 P_\lambda(t) = \sum_{T \in B(\lambda)} \psi_T X^T \,,\qquad \hbox{ for } \lambda \in (\ZZ_{\geq 0}^n)_+ \,.
	\end{equation}
\end{theorem}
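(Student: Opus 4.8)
The plan is to prove the stated identity inside the affine Hecke algebra and then transport it to symmetric polynomials via the Satake isomorphism. Recall that the map $f(X)\mapsto f(X)\one_0$ identifies $\ZZ[t^{\pm1}][X_1^{\pm1},\ldots,X_n^{\pm1}]$ with $H\one_0$ and is in particular injective, so it suffices to prove the identity after right multiplication by $\one_0$, namely
\[
P_\lambda(t)\one_0 = \sum_{T\in B(\lambda)} \psi_T\, X^T \one_0 .
\]
By the definition \eqref{Plambdadef} the left side equals $\frac{1}{W_\lambda(t)}\one_0 X^\lambda\one_0$, so the whole problem reduces to understanding the element $\one_0 X^\lambda$ of $H$.

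The first step is the affine Hecke algebra lift $\one_0 X^\lambda = \sum_{T\in B(\lambda)} X^T \Psi_T$ (the result \autoref{10Xlambda}). The single-column case is already \eqref{10Xomegak}: factoring $\one_0 = \one^{\varpi_\ell}\one_{\varpi_\ell}$ and applying \autoref{TuFXomegak} writes $\one_0 X^{\varpi_\ell}$ as $\sum_{I\in B(\varpi_\ell)} X^I\, t^{\ell(u_I)}(T_{u_I^{-1}})^{-1}\one_{\varpi_\ell}$, which is exactly $\sum_I X^I\Psi_I$. For $\lambda=\varpi_{\ell_r}+\cdots+\varpi_{\ell_1}$ I would induct on the number of columns: peeling off the leftmost (longest) fundamental weight $\varpi_{\ell_r}$, I would combine the single-column expansion with the commutation relations \autoref{Lusztig} and \autoref{TiXmu} to move fundamental-weight monomials into position, and then re-collect the result in the basis adapted to the parabolic quotient $S_n/S_{n,\varpi_{\ell_{k+1}}}$ (using the factorization \autoref{Snfactorization}) that defines the recursion for $\Psi$. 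The crucial input making the powers of $t$ and the $(T_{u^{-1}})^{-1}$ factors come out correctly is the minuscule identity \autoref{omegak1}.

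With the lift established, right multiplication by $\one_0$ gives $\one_0 X^\lambda\one_0=\sum_T X^T(\Psi_T\one_0)$. Since $\Psi_T$ lies in the finite Hecke algebra $H_n$ and $T_w\one_0=t^{\ell(w)}\one_0$ by \eqref{Tw10}, the product $\Psi_T\one_0$ is a scalar multiple of $\one_0$; dividing by $W_\lambda(t)$ defines the quantity $\widetilde{\psi}_T=\frac{1}{W_\lambda(t)}\Psi_T\one_0$, which one must check lies in $\ZZ[t^{\pm1}]$, and yields $P_\lambda(t)\one_0=\sum_T \widetilde{\psi}_T\, X^T\one_0$. Everything then comes down to proving $\widetilde{\psi}_T=\psi_T$ (\autoref{tildepsi=psi}).

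This identification is the heart of the argument and the main obstacle. I would first prove multiplicativity, $\widetilde{\psi}_T=\prod_{j=1}^{r-1}\widetilde{\psi}_{C_{j+1}\otimes C_j}$ over the consecutive columns $C_r,\ldots,C_1$ of $T$, mirroring Macdonald's own factorization of $\psi_T$, so that both quantities reduce to the two-column case. For two columns I would extract a recursion for $\widetilde{\psi}_{F\otimes E}$ from the recursive definition of $\Psi$ together with the commutation relations, and verify that it coincides with Klostermann's recursions in \autoref{prop:klostermannrecs}, whose form is exactly that of the Kazhdan--Lusztig $R$-polynomial recursion. The boundary data is supplied by \autoref{PsiT0ifnotSSYT} and \autoref{Psiinitialcondition}: $\Psi_T=0$ when $T$ is not semistandard, matching that $\psi_T$ is summed only over $B(\lambda)$, and $\Psi_T=\one_\lambda$, hence $\widetilde{\psi}_T=1$, when $T$ is highest weight. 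Since $\widetilde{\psi}_T$ and $\psi_T$ then satisfy the same recursion with the same initial values, they are equal; substituting into $P_\lambda(t)\one_0=\sum_T\widetilde{\psi}_T X^T\one_0$ and applying the Satake isomorphism gives the theorem. The delicate point throughout is checking that the recursion produced by the noncommutative $\Psi$-calculus reproduces the combinatorial recursion term by term, with the correct powers of $t$ and the correct vanishing off $B(\lambda)$.
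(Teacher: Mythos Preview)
Your outline is correct and mirrors the paper's strategy almost exactly: lift to $\one_0 X^\lambda=\sum_T X^T\Psi_T$, project via $\one_0$ to define $\widetilde{\psi}_T$, and identify $\widetilde{\psi}_T$ with $\psi_T$ by matching Klostermann's recursions and initial conditions. One ordering subtlety: you propose proving the multiplicativity $\widetilde{\psi}_T=\prod_j\widetilde{\psi}_{C_{j+1}\otimes C_j}$ first and then handling two columns, but in the paper multiplicativity is \emph{deduced from} the general $r$-column recursion for $\Psi_T$ (\autoref{Psirecrcols}, via the Fundamental \autoref{TXPsi}) after projecting to \eqref{eq:tildepsirec}; there is no independent route to multiplicativity offered, so you should expect to establish the full recursion before the factorization rather than after.
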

\ytableausetup{smalltableaux}
\begin{example}
	Let $n = 3$, $\lambda = (2,1,0)$. Then the coefficient of $X_1X_2X_3$ in $P_{(2,1,0)}$ is $$\psi_{\ytableaushort{12,3}} + \psi_{\ytableaushort{13,2}}\,,$$
	where $$ \psi_{\ytableaushort{12,3}} = \psi_{\ydiagram{2}*[*(gray)]{1+1}} \cdot \psi_{\ydiagram{2,1}*[*(gray)]{0,1}} = (1-t)\cdot 1 \,, \qquad \hbox{ and } \qquad \psi_{\ytableaushort{13,2}} = \psi_{\ydiagram{1,1}*[*(gray)]{0,1}} \cdot \psi_{\ydiagram{2,1}*[*(gray)]{1+1}} = 1\cdot (1-t^2)\,. $$

\end{example}

\begin{remark}
	Note that the above example shows that even though $P_{\lambda}$ is symmetric the $\psi_T$s are not directly symmetric, i.e, $\psi_T$ may not equal $\psi_{wT}$ for $w \in S_n$ where $wT$ denotes the crystal action of $w$ on $T$.
\end{remark}

Klostermann \cite{Klostermann} obtained a recursive formula describing $\psi_T$. We rederive these recursions in \autoref{prop:klostermannrecs}. We then obtain a Hecke algebra lift of Klostermann's recursions. This provides another proof of Macdonald's formula. 

The next two subsections are for obtaining the recursions for $\psi_T$.


\subsection{Writing $\psi_T$ box-by-box and column-by-column}

For a box $b$ in the diagram of $\lambda$ we write $b \in \lambda$. For a box $b \in \lambda$, let $\Leg(b,\lambda)$ denote the set of all boxes below $b$ in the same column.


Let $T \in B(\lambda)$.  Let $\leg_T(b,<i) = \# \{ b' \in \Leg(b,\lambda)\,|\, T(b')<i \}$. 

%


For a box $b = (r,c) \in \lambda$ define \begin{equation}
	\psi_T(b) = \begin{cases}
	1- t^{\leg_T(b,<i)+1}\,, & \hbox{ if } T(r,c+1) = i \hbox{ and } i \notin T(\Leg_\lambda(b))\cup T(b),
	\\
	1\,, & \hbox{ otherwise}.
\end{cases}
\end{equation}

Then \begin{equation}
	\psi_T = \prod_{b \in \lambda} \psi_T(b) \,.
\end{equation}


\subsection{Klostermann recursions}

Recall the action of $S_n$ on the set of all columns from \S\ref{columndef}. 

%
%
%

The following recursions were first described by Klostermann in the proof of proposition 27 in \cite{Klostermann}.

\begin{proposition}
\label{prop:klostermannrecs}
	\begin{enumerate}
		\item Suppose $1 \leq \ell_1 \leq \ldots \leq \ell_r \leq n$. If $T = C_r \otimes \ldots \otimes C_1$ with $C_i \in B(\varpi_{\ell_i})$ then \begin{equation}\label{eq:psiTproduct}
			\psi_T = \prod_{j = 1}^{r-1} \psi_{C_{j+1}\otimes C_{j}} \,.
		\end{equation}
		
		\item Let $1 \leq a \leq b \leq n$. For $F \in B(\varpi_b), E \in B(\varpi_a)$, the $\psi_{F\otimes E}$ are determined by the following recursions. \begin{enumerate}
			\item[(a)] If $F\otimes E$ is not semistandard then $\psi_{F\otimes E} = 0$,
			\item[(b)] If $E = (1,\ldots,a)$ and $F\otimes E$ is semistandard then $\psi_{F\otimes E} = 1$.
			
			\item[(c)] Suppose $F\otimes E$ is semistandard. If $j \notin E, j+1 \in E$ then \begin{align*}
				\psi_{F\otimes E} = \begin{cases}
					t \psi_{s_jF\otimes s_jE} + (1-t) \psi_{F\otimes s_jE}\,, & \hbox{ if } j \in F, j+1 \notin F\,,
					\\
					\psi_{s_jF\otimes s_jE}\,, &\hbox{ otherwise} \,.
				\end{cases}
			\end{align*}
		\end{enumerate}
		
	\end{enumerate}	
\end{proposition}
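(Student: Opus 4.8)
The plan is to establish the three claims of part (2) about two-column tableaux first, since these are the base cases and the recursive engine, and then derive the factorization (1) from the box-by-box description of $\psi_T$. For the two-column recursions, I would work directly from the formula $\psi_{F\otimes E} = \prod_{b} \psi_{F\otimes E}(b)$, where each box contributes either $1$ or a factor $1 - t^{\leg(b,<i)+1}$. Claim (a) should be essentially immediate: if $F\otimes E$ is not semistandard, then since $F$ and $E$ are individually columns (hence strictly increasing), the only way semistandardness can fail is in the horizontal comparison between the two columns; I would show that this failure forces one of the horizontal-strip conditions defining $\psi_{\lambda/\mu}$ to break, so that some $J$-index $j$ has $m_j(\mu) = 0$, contributing a factor $1 - t^0 = 0$. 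Claim (b), where $E = (1,\ldots,a)$, is the highest-weight-type column: here I would check that the flag $\GT(F\otimes E)$ produces only trivial factors, because the entries $1,\ldots,a$ sitting in the leftmost positions make every relevant leg count vanish in the right way, forcing each $\psi$-factor to equal $1$.

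The heart of the argument is claim (c), the recursion under the action of $s_j$ when $j\notin E$, $j+1 \in E$. Here I would compare the box contributions of $F\otimes E$ with those of $F \otimes s_jE$ and $s_jF \otimes s_jE$, using \autoref{lemma:sjEcomparison} to track how applying $s_j$ moves the entry $j+1$ down to $j$ in the column $E$ (and possibly $j$ up to $j+1$ in $F$). The key is to isolate exactly which boxes change their $\psi$-contribution under these column moves: only boxes whose leg-count $\leg_T(b,<i)$ is sensitive to whether the value $j$ or $j+1$ appears should be affected, and I would show the change is governed by a single $t$-power. The two cases in (c) correspond to whether $F$ also admits the move (i.e., $j\in F, j+1\notin F$): when it does, the weighted combination $t\,\psi_{s_jF\otimes s_jE} + (1-t)\psi_{F\otimes s_jE}$ arises from splitting the leg-count contribution into the part coming from $F$'s entry and the part from below; when it does not, the contribution is unchanged except for the relabeling, giving $\psi_{s_jF\otimes s_jE}$. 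I expect the bookkeeping of exactly which leg-counts shift by one—and verifying that the telescoping of $t$-powers reproduces the stated linear combination—to be the main obstacle.

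Finally, for the factorization (1), I would use the box-by-box formula together with the observation that $\psi_T(b)$ for a box $b=(r,c)$ depends only on $T(b)$, the value $T(r,c+1)$ immediately to the right, and the entries in the legs of columns $c$ and $c+1$. Since this data involves only two adjacent columns $C_{j+1}$ and $C_j$ (with $c$ in column $C_{j+1}$ and $c+1$ in column $C_j$ in the left-to-right convention), the product over all boxes factors as $\prod_{j} \psi_{C_{j+1}\otimes C_j}$, where each factor collects precisely the boxes lying in the adjacent pair. The one point requiring care is that a box $b$ in column $C_{j+1}$ only ever interacts with its immediate right-neighbor column $C_j$ and never with columns further to the right, which follows directly from the definition of $\psi_T(b)$; I would verify this localization explicitly and confirm that boxes in the rightmost column $C_1$ contribute only trivial factors, consistent with the product running up to $r-1$.
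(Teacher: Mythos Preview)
Your proposal takes essentially the same approach as the paper: part (1) via locality of the box contributions $\psi_T(b)$, and part (2.c) by comparing box-by-box factors of $F\otimes E$, $s_jF\otimes s_jE$, and $F\otimes s_jE$ under the $s_j$-action, with the four cases for $F$ (namely $j\in F, j{+}1\notin F$; both in; both out; $j\notin F, j{+}1\in F$) handled separately. Two small remarks: for (2.a) the paper simply takes $\psi_{F\otimes E}=0$ as a convention when $F\otimes E$ is not semistandard (since the flag $\GT(T)$ and horizontal-strip definition of $\psi$ only apply to semistandard $T$), so you need not hunt for a vanishing factor; and for (2.c) the paper's bookkeeping is simpler than you anticipate---only the single box in $F$ immediately left of the entry $j{+}1$ in $E$ changes its contribution, and the identity $1-t^d = t(1-t^{d-1}) + (1-t)\cdot 1$ is all the telescoping you need.
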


\begin{proof}
		\item[(1)] Since each $\psi_T(b)$ depends on only the column of $b$ and the column right next to it, \eqref{eq:psiTproduct} follows.
 
		\item[(2.a)] is part of the definition of $\psi_{F\otimes E}$.
		
		\item[(2.b)] Since $F\otimes E$ is semistandard, $f_1 = 1,\ldots,f_a = a$. So the first $a$ entries of $F$ is the same as the corresponding entries in $E$, hence $\psi_{F\otimes E} = 1$.
		
		\item[(2.c)] 
		Let $\leftarrow k$ denote the box in $F$ immediate left of the box in $E$ containing $k$, and 	
		let $\psi_{F\otimes E}(\leftarrow k)$ denote the  contribution from the box in $F$ immediate left of the box in $E$ containing $k$.

\noindent		Suppose $j \notin E, j+1 \in E$.
		
		\textbf{Case 1:} Suppose $j \in F, j+1 \notin F$. If $d = \leg(\leftarrow j+1,<j+1)+1 > 1$  then $\psi_{F\otimes E}(\leftarrow j+1) = 1-t^d, \psi_{s_jF\otimes s_jE}(\leftarrow j) = 1-t^{d-1}, \psi_{F\otimes s_jE}(\leftarrow j) = 1$.
			
			\ytableausetup{boxsize=2.5em}
			$$F\otimes E = \ytableaushort{.{j+1},{\vdots},j}\,, \quad s_jF\otimes s_jE = \ytableaushort{.{j},{\vdots},{j+1}}\,, \quad F\otimes s_jE = \ytableaushort{.j,{\vdots},j}\,.$$
			And $\psi_{F\otimes E}(\leftarrow k) = \psi_{s_jF\otimes s_jE}(\leftarrow k) = \psi_{F\otimes s_jE}(\leftarrow k)$ for $k \notin \{j,j+1\}$.
			Then since $(1-t^d) = t(1-t^{d-1})+(1-t)1$ we get $\psi_{F\otimes E} = t\psi_{s_jF\otimes s_jE}+(1-t)\psi_{F\otimes s_jE}$.
			
			If $d = 1$ then $s_jF \otimes s_jE$ is not semistandard, so $\psi_{s_jF\otimes s_jE} = 0$ and $\psi_{F\otimes E}(\leftarrow j+1) = 1-t, \psi_{F\otimes s_jE}(\leftarrow j) = 1$. 
			
			\ytableausetup{boxsize=2.5em}
			$$F\otimes E = \ytableaushort{j{j+1}}\,, \quad s_jF\otimes s_jE = \ytableaushort{{j+1}{j}}\,, \quad F\otimes s_jE = \ytableaushort{jj}\,.$$
				
		\noindent	In this case the statement is true since $(1-t^1) = t\cdot 0 + (1-t)\cdot 1$.			
			
			\vspace{1em}
			
			\textbf{Case 2:} Suppose $j,j+1 \in F$. Note that since $F\otimes E$ is semistandard, $j+1$ of $E$ is above or in the same row of $j+1$ of $F$. If $j+1$ of $E$ is in the same row as $j+1$ of $F$ then by semistandardness of $F \otimes E$, $j \in E$ also. 
			
			If $j+1$ of $E$ is in a row strictly above $j+1$ of $F$ then $\psi_{F\otimes E}(\leftarrow j+1) = 1 = \psi_{s_jF\otimes s_jE}(\leftarrow j)$, and $\psi_{F\otimes E}(\leftarrow k) = \psi_{s_jF\otimes s_jE}(\leftarrow k)$ for $k \notin \{j,j+1\}$. So $\psi_{F\otimes E} = \psi_{s_jF\otimes s_jE}$.
			
			$$F\otimes E = \ytableaushort{.{j+1},{\vdots},j,{j+1}}\,, \quad s_jF\otimes s_jE = \ytableaushort{.{j},{\vdots},j,{j+1}}\,.$$
			
%
%
			\textbf{Case 3:} If $j, j+1 \notin F$ then $\psi_{F\otimes E}(\leftarrow j+1) = 1-t^d = \psi_{s_jF\otimes s_jE}(\leftarrow j)$, where $d = \leg(\leftarrow j+1,<j+1)+1$, and $\psi_{F\otimes E}(\leftarrow k) = \psi_{s_jF\otimes s_jE}(\leftarrow k)$ for $k \neq j+1$. So $\psi_{F\otimes E} = \psi_{s_jF\otimes s_jE}$.
			
			$$F\otimes E = \ytableaushort{.{j+1},{\vdots},.}\,, \quad s_jF\otimes s_jE = \ytableaushort{.{j},{\vdots},.}\,.$$

			\textbf{Case 4:} If $j \notin F, j+1 \in F$ then $\psi_{F\otimes E}(\leftarrow j+1) = 1, \psi_{s_jF\otimes s_jE}(\leftarrow j) = 1$, and $\psi_{F\otimes E}(\leftarrow k) = \psi_{s_jF\otimes s_jE}(\leftarrow k)$ for $k \notin \{j,j+1\}$. So $\psi_{F\otimes E} = \psi_{s_jF\otimes s_jE}$.
			
			$$F\otimes E = \ytableaushort{.{j+1},{\vdots},{j+1}}\,, \quad s_jF\otimes s_jE = \ytableaushort{.{j},{\vdots},j}\,.$$

\end{proof}

\begin{example}
	\ytableausetup{smalltableaux}
	\begin{align*}
		&\psi_{\ytableaushort{12,3}} = t \psi_{\ytableaushort{21,3}} + (1-t) \psi_{\ytableaushort{11,3}} = t\cdot 0 + (1-t) \cdot 1 = 1-t \,,
		\\
		&\psi_{\ytableaushort{12,2}} = \psi_{\ytableaushort{11,2}} = 1\,,
		\\
		&\psi_{\ytableaushort{13,2}} = t \psi_{\ytableaushort{12,3}} + (1-t) \psi_{\ytableaushort{12,2}} = t\cdot (1-t) + (1-t) \cdot 1 = 1-t^2\,.
	\end{align*}
\end{example}

\section{AHA lift of $\psi_T$}

This section defines our main objects of study, the $\Psi_T \in H_n$, where $T$ is a column-strict filling. Our main goal in this section is to establish \autoref{10Xlambda}, which gives an affine Hecke algebra lift of the monomial expansion formula of Macdonald \autoref{ThMacformula}. 

\subsection{Parabolic Expansions}

Let $H_n$ be the $\ZZ[t^{\pm1}]$ subalgebra of $H$ generated by $T_1,\ldots,T_{n-1}$. Then $H_n$ has $\ZZ[t^{\pm1}]$-basis $\{ T_w \,|\, w \in S_n \}$. Let $H_{n,\lambda}$ be the subalgebra of $H_n$ with basis $\{T_w \,|\, w \in S_{n,\lambda}\}$. In particular, for $\ell \in [n]$, $H_{n,\varpi_{\ell}}$ is the subalgebra of $H_n$ generated by $T_1,\ldots,T_{\ell-1},T_{\ell+1},\ldots,T_{n-1}$. Hence, $H_{n,\varpi_{\ell}} \cong H_{\ell} \times H_{n-\ell}$.

\begin{lemma}\label{Hnparabolicdecomp}
	Let $h \in H_n$ and $\ell \in [n]$. There exists unique decomposition  $$h = \sum_{F \in B(\varpi_{\ell})} T_{u_F}h_F \,, \qquad \hbox{ with } h_F \in H_{n,\varpi_{\ell}}\,.$$ 
\end{lemma}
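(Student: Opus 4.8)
The plan is to reduce the statement to the unique parabolic factorization of the symmetric group established in \autoref{Snfactorization}, transported to the Hecke algebra by the length-additive multiplicativity of the $T_w$ in \eqref{TuveqTuTv}. First I would recall that by \autoref{Snparabolicquotients} the minimal length coset representatives of $S_n/S_{n,\varpi_\ell}$ are exactly the $u_F$ with $F \in B(\varpi_\ell)$, so \autoref{Snfactorization} says that every $w \in S_n$ admits a unique factorization $w = u_F v$ with $F \in B(\varpi_\ell)$, $v \in S_{n,\varpi_\ell}$, and $\ell(w) = \ell(u_F) + \ell(v)$. By \eqref{TuveqTuTv} this yields $T_w = T_{u_F} T_v$ with $T_v \in H_{n,\varpi_\ell}$.

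For existence, write $h = \sum_{w \in S_n} c_w T_w$ in the basis of $H_n$ and group the terms according to the coset representative of $w$: setting $h_F = \sum_{v \in S_{n,\varpi_\ell}} c_{u_F v} T_v$, which lies in $H_{n,\varpi_\ell}$ by construction, the factorization $T_{u_F v} = T_{u_F} T_v$ gives $h = \sum_{F \in B(\varpi_\ell)} T_{u_F} h_F$.

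For uniqueness, suppose $\sum_{F} T_{u_F} h_F = 0$ with $h_F = \sum_{v} c_{F,v} T_v \in H_{n,\varpi_\ell}$. Expanding and using $T_{u_F} T_v = T_{u_F v}$ gives $\sum_{F} \sum_{v} c_{F,v} T_{u_F v} = 0$. Because the map $(F,v) \mapsto u_F v$ is a bijection onto $S_n$ by the uniqueness half of \autoref{Snfactorization}, the elements $u_F v$ are pairwise distinct, so the linear independence of $\{T_w : w \in S_n\}$ forces every $c_{F,v}$ to vanish; hence each $h_F = 0$.

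I expect no serious obstacle: the only point requiring care is the identity $T_{u_F} T_v = T_{u_F v}$, which is precisely where the length-additivity $\ell(u_F v) = \ell(u_F) + \ell(v)$ from \autoref{Snfactorization} is needed, so that no quadratic relation \eqref{quadTi} intervenes and the passage between $S_n$ and the $T$-basis is a clean reindexing of the basis.
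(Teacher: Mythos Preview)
Your proof is correct and follows the same approach as the paper: reduce to basis elements $T_w$, factor $w = u_F v$ via \autoref{Snfactorization} and \autoref{Snparabolicquotients}, and use \eqref{TuveqTuTv} to split $T_w = T_{u_F}T_v$. The paper's proof is terser and leaves uniqueness implicit, whereas you spell out the uniqueness argument via the bijection $(F,v)\mapsto u_Fv$ and linear independence of $\{T_w\}$; this is the intended argument, just made more explicit.
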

\begin{proof}
	Since $H_n$ has $\ZZ[t^{\pm 1}]$ basis $\{T_w \,|\, w \in S_n\}$, it is enough to decompose each $T_w$. From \autoref{Snfactorization}, $w$ has a unique factorization $w = uv$ with $u \in S_n^{\varpi_{\ell}}$ and $v\in S_{n,\varpi_{\ell}}$, with $\ell(w) = \ell(u)+ \ell(v)$. By \autoref{Snparabolicquotients}, $u = u_F$ for some $F \in B(\varpi_{\ell})$.  Then by \eqref{TuveqTuTv}, $T_w = T_{u_F} T_v$, and $T_v \in H_{n,\varpi_{\ell}}$.
\end{proof}


The following proposition follows from the subword property of the Bruhat order and the quadratic relation \eqref{quadTi} in the Hecke algebra. For a complete proof see \cite[Lemma 3.6]{IntrotoSoergelBimodules}.

\begin{proposition}\label{prop:Rpolysupport}
	There exists polynomials $a_v(t) \in \ZZ[t^{\pm1}]$ such that $$(T_{w^{-1}})^{-1} = \sum\limits_{v \leq w }a_{v}(t) T_v\,.$$. In other words, $(T_{w^{-1}})^{-1}$ is a linear combination of $T_v$ with $v \leq w$.
\end{proposition}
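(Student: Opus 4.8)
The plan is to prove this by induction on $\ell(w)$ using the recursion for $T_w$ in the Hecke algebra together with the subword property of Bruhat order. The key observation is that $(T_{w^{-1}})^{-1}$ can be computed recursively by peeling off simple reflections, and at each step the quadratic relation \eqref{quadTi} (in the form \eqref{tT_iinv}) only introduces terms $T_v$ with $v$ bounded above by the element we started with.

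First I would set up the base case: when $w = 1$ we have $(T_{1})^{-1} = T_1 = 1$, which is the trivial combination with $a_1(t) = 1$ and all other coefficients zero. For the inductive step, suppose $\ell(w) > 0$ and pick a simple reflection $s_i$ with $\ell(s_iw) < \ell(w)$, so $w = s_i w'$ with $w' = s_i w$ and $\ell(w') = \ell(w) - 1$. Then $w^{-1} = (w')^{-1} s_i$, so $T_{w^{-1}} = T_{(w')^{-1}} T_i$ by \eqref{TuveqTuTv}, and hence $(T_{w^{-1}})^{-1} = T_i^{-1} (T_{(w')^{-1}})^{-1}$. Using \eqref{tT_iinv} to write $T_i^{-1} = t^{-1}(T_i + 1 - t)$, and applying the induction hypothesis to $(T_{(w')^{-1}})^{-1} = \sum_{v \leq w'} a_v(t) T_v$, I obtain
\[
(T_{w^{-1}})^{-1} = t^{-1}\sum_{v \leq w'} a_v(t)\bigl(T_i T_v + (1-t)T_v\bigr).
\]

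Next I would analyze the terms $T_i T_v$ using \eqref{T_iT_w}: if $s_i v > v$ then $T_i T_v = T_{s_i v}$, and if $s_i v < v$ then $T_i T_v = (t-1)T_v + t T_{s_i v}$. In either case the resulting $T$-basis elements are indexed by $v$ and $s_i v$. The crucial point is a Bruhat-order bound: for $v \leq w'$, both $v \leq w = s_i w'$ and $s_i v \leq w$. Since $\ell(s_i w') > \ell(w')$, the lifting property of Bruhat order gives $w' \leq s_i w' = w$ and $s_i v \leq s_i w' = w$ whenever $v \leq w'$ (this is the standard ``$Z$-property'' / lifting lemma for Bruhat order, and is exactly the subword-property input the proposition references). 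Collecting all contributions, every surviving basis element is some $T_u$ with $u \leq w$, and regrouping the coefficients into the required $a_v(t) \in \ZZ[t^{\pm 1}]$ completes the step.

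The main obstacle I expect is making the Bruhat-order comparisons fully rigorous, specifically verifying that $v \leq w'$ implies $s_i v \leq s_i w' = w$. This is precisely the lifting property of Bruhat order (see \cite[Chapter 2]{BjornerBrenti}), and invoking it cleanly — rather than re-deriving it — is what keeps the argument short; the algebraic manipulation with \eqref{T_iT_w} and \eqref{tT_iinv} is routine once that order-theoretic input is in hand. Since the paper explicitly defers to \cite[Lemma 3.6]{IntrotoSoergelBimodules} for the complete proof, I would emphasize the structure above as a sketch and cite that reference for the details, noting only that the coefficients lie in $\ZZ[t^{\pm 1}]$ because each recursive step multiplies by $t^{-1}$ and adds integer-coefficient polynomials in $t$.
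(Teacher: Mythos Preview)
Your proposal is correct and is precisely the standard inductive argument that the paper alludes to (subword property of Bruhat order plus the quadratic relation) and defers to \cite[Lemma~3.6]{IntrotoSoergelBimodules}; the paper itself gives no further details. Your handling of the Bruhat bound is fine: $v\leq w'$ gives $v\leq w$ trivially, and for $s_iv\leq w$ one either has $s_iv<v\leq w$, or $s_iv>v$ in which case prepending $s_i$ to a reduced word for $v$ yields a reduced subword of $s_i\cdot(\text{reduced word for }w')$, which is a reduced word for $w$.
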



\begin{proposition}\label{semistandardness2}
	Let $h \in H_{n,\varpi_{a}}$. Let $E$ be a column of length $a$ and let $b \geq a$. Write \begin{equation}\label{eq:semistandarness1}
		t^{\ell(u_E)}(T_{u_E^{-1}})^{-1} h = \sum_{F \in B(\varpi_{b})} T_{u_F}h_F \,, \qquad \hbox{ with } h_F \in H_{n,\varpi_b}\,.
	\end{equation} If $F\otimes E$ is not semistandard then $h_F = 0$.
\end{proposition}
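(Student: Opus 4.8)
The plan is to prove the contrapositive: I will show that whenever the coefficient $h_F$ is nonzero, the two-column filling $F\otimes E$ is semistandard. Writing $E=(e_1,\ldots,e_a)$ and $F=(f_1,\ldots,f_b)$ with $b\geq a$, semistandardness of $F\otimes E$ is exactly the row condition $f_i\leq e_i$ for all $i\in[a]$ (the column conditions hold automatically since $F$ and $E$ are already increasing). So the whole statement reduces to controlling, for each basis element $T_y$ appearing on the left-hand side of \eqref{eq:semistandarness1}, the index $F$ of the parabolic coset to which $y$ belongs. By the proof of \autoref{Hnparabolicdecomp} that index is $F=y[b]_{\leq}$, the increasing rearrangement of $\{y(1),\ldots,y(b)\}$; thus I must show $(y[b]_{\leq})_i\leq e_i$ for $i\in[a]$ for every $y$ in the support.

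First I would reduce the length-$b$ statement to a length-$a$ one. Since $y[a]\subseteq y[b]$, the $i$-th smallest element of the larger set is at most the $i$-th smallest of the smaller set, so $(y[b]_{\leq})_i\leq (y[a]_{\leq})_i$ for $i\in[a]$. Hence it suffices to prove $(y[a]_{\leq})_i\leq e_i$, i.e. $y[a]_{\leq}\leq E$ in the column order of $B(\varpi_a)$, for every $y$ in the support of $t^{\ell(u_E)}(T_{u_E^{-1}})^{-1}h$.

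Next I would pin down this support. By \autoref{prop:Rpolysupport}, $(T_{u_E^{-1}})^{-1}$ is a $\ZZ[t^{\pm1}]$-combination of $T_v$ with $v\leq u_E$ in Bruhat order. The key lemma, and the real content of the argument, is that right multiplication by $h\in H_{n,\varpi_a}$ preserves the unordered set $y[a]$: if $z$ appears in $T_v h$, then $z[a]=v[a]$. To see this I would write $h$ in the $T_w$ basis ($w\in S_{n,\varpi_a}$), take a reduced word for each such $w$ in the generators $\{s_k:k\neq a\}$ of $S_{n,\varpi_a}=S_{[1,a]}\times S_{[a+1,n]}$, and multiply by $T_{s_k}$ one factor at a time; by \eqref{T_iT_w} each step sends a single $T_z$ to a combination of $T_z$ and $T_{zs_k}$, and since $k\neq a$ the transposition $s_k$ swaps two positions lying either both in $[a]$ or both in $[a+1,n]$, leaving $z[a]$ unchanged as a set. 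Combining this with the easy direction of the tableau criterion \autoref{bruhattableaucondition}, $v\leq u_E$ forces $v[a]_{\leq}\leq u_E[a]_{\leq}=E$; since $z[a]_{\leq}=v[a]_{\leq}$, every $z$ in the support satisfies $z[a]_{\leq}\leq E$.

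Finally I would assemble the pieces: every $y$ in the support has $y[a]_{\leq}\leq E$, hence $(y[b]_{\leq})_i\leq (y[a]_{\leq})_i\leq e_i$ for $i\in[a]$, so its parabolic index $F=y[b]_{\leq}$ makes $F\otimes E$ semistandard. Consequently $h_F$, which collects exactly the terms $T_y$ with $y[b]_{\leq}=F$, must vanish whenever $F\otimes E$ is not semistandard. The main obstacle is the key lemma on preservation of $y[a]$ under right multiplication by the parabolic; once that invariance is in hand, the rest is order-statistic bookkeeping together with the Bruhat--tableau dictionary already established in the paper.
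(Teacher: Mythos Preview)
Your argument is correct and rests on the same three pillars as the paper's proof: the Bruhat support result \autoref{prop:Rpolysupport}, the tableau criterion \autoref{bruhattableaucondition}, and the order-statistic inequality $(y[b]_{\leq})_i\leq (y[a]_{\leq})_i$ for $i\in[a]$.

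The organizational difference is in how you handle the factor $h\in H_{n,\varpi_a}$. You keep $(T_{u_E^{-1}})^{-1}$ and $h$ separate, apply \autoref{prop:Rpolysupport} to $u_E$ alone, and then prove your ``key lemma'' that right multiplication by any element of $H_{n,\varpi_a}$ leaves the set $y[a]$ unchanged. The paper instead notes that $\{(T_{v^{-1}})^{-1}:v\in S_{n,\varpi_a}\}$ is a basis of $H_{n,\varpi_a}$, so it suffices to take $h=(T_{v^{-1}})^{-1}$; since $\ell(u_Ev)=\ell(u_E)+\ell(v)$ one has $(T_{u_E^{-1}})^{-1}(T_{v^{-1}})^{-1}=(T_{w^{-1}})^{-1}$ with $w=u_Ev$, and a single application of \autoref{prop:Rpolysupport} to $w$ handles everything at once (the observation $w[a]_{\leq}=E$ replaces your invariance lemma). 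Your route is slightly more hands-on and avoids invoking the inverse basis; the paper's route is shorter because it folds the parabolic factor into the Bruhat support step rather than treating it as a separate preservation statement.
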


\begin{proof}

Since for any $k \in [n]$, $S_{n,\varpi_k}$ is a group, and the subalgebra $H_{n,\varpi_k}$ of $H_n$ is invariant under taking inverses, it is enough to prove the proposition for $t^{\ell(u_E)}(T_{u_E^{-1}})^{-1}(T_{v^{-1}})^{-1}$ for some $v \in S_{n,\varpi_a}$. Let $w = u_E v$, then, $ (T_{u_E^{-1}})^{-1}(T_{v^{-1}})^{-1} = (T_{w^{-1}})^{-1}$. Using \autoref{prop:Rpolysupport}, $(T_{w^{-1}})^{-1}$ is a linear combination of $T_x$ with $x \leq w$. Now, write $x = u_F z$ for $u_F \in S_{n}^{\varpi_b}$ and $z \in S_{n,\varpi_b}$, according to \autoref{Snfactorization}. Then $T_x = T_{u_F}T_z$. Therefore, the terms appearing in $(T_{w^{-1}})^{-1}$ correspond to $u_F$ for $u_F \leq w = u_E v$. Then by \autoref{bruhattableaucondition}, $u_F[a]_{\leq} \leq w[a]_{\leq} = E$, since $v$ fixes $[a]$ and $u_E[a]_{\leq} = E$. Let $u_F[a]_{\leq} = (f_1',\ldots,f_a')$. Since $u_F[b]_{\leq} = F = (f_1,\ldots,f_b)$, and $b \geq a$, we have $f_i \leq f'_i$ for $i \in [a]$. Hence $F\otimes E$ is semistandard.
\end{proof}

\begin{example}
	Let $n = 3$, $a = 1,b=2$. Then $S_{3,\varpi_1} = S_{[1]}\times S_{[2,3]} = \{1,s_2\}$, $$S_{3}^{\varpi_1} =\{ u_{\ytableaushort{1}} = \begin{pmatrix}
		1 & 2 & 3 \\ 1 & 2 & 3
	\end{pmatrix} , u_{\ytableaushort{2}} = \begin{pmatrix}
		1 & 2 & 3 \\ 2 & 1 & 3
	\end{pmatrix} = s_1 , u_{\ytableaushort{3}} = \begin{pmatrix}
		1 & 2 & 3 \\ 3 & 1 & 2
	\end{pmatrix} = s_2s_1 \}.$$
	And $S_{3,\varpi_2} = S_{[1,2]}\times S_{[3]} = \{1,s_1\}$, $$ S_3^{\varpi_2} = \{ u_{\ytableaushort{1,2}} = \begin{pmatrix}
		1 & 2 & 3 \\ 1 & 2 & 3
	\end{pmatrix} = 1, u_{\ytableaushort{1,3}} = \begin{pmatrix}
		1 & 2 & 3 \\ 1 & 3 & 2
	\end{pmatrix} = s_2, u_{\ytableaushort{2,3}} = \begin{pmatrix}
		1 & 2 & 3 \\ 2 & 3 & 1
	\end{pmatrix} = s_1s_2   \}.$$
	
	\begin{align*}
		&t^{\ell(u_{\ytableaushort{1}})}(T_{u_{\ytableaushort{1}}^{-1}})^{-1} = 1 \in H_{3,\varpi_2}\,,
		\\
		&t^{\ell(u_{\ytableaushort{2}})}(T_{u_{\ytableaushort{2}}^{-1}})^{-1} = tT_1^{-1} = 1\cdot(T_1+1-t) \in H_{3,\varpi_2}\,,
		\\
		&t^{\ell(u_{\ytableaushort{3}})}(T_{u_{\ytableaushort{3}}^{-1}})^{-1} = tT_2^{-1}tT_1^{-1} = (T_2+1-t)tT_1^{-1} = T_2\cdot tT_1^{-1} + 1\cdot (1-t)tT_1^{-1} 
		\\
		&\qquad \qquad \qquad= T_{u_{\ytableaushort{1,3}}}\cdot tT_1^{-1} + T_{u_{\ytableaushort{1,2}}} \cdot (1-t)tT_1^{-1}\,. 
	\end{align*}
\end{example}

In particular, when $E = (1,\ldots,a)$ we get the following corollary.

\begin{corollary}\label{Hnparabolicdecomp2}
	Let $1 \leq a \leq b \leq n$. Suppose $h \in H_{n,\varpi_a}$. Then write $$ h = \sum_{F \in B(\varpi_{b})} T_{u_F}h_F \,, \qquad \hbox{ with } h_F \in H_{n,\varpi_{b}}\,.$$ Then $h_F \neq 0$ only if $f_1 = 1,\ldots, f_a = a$. 
\end{corollary}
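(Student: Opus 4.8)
The plan is to deduce this directly from \autoref{semistandardness2} by specializing the column $E$ to $E = (1,\ldots,a)$. First I would observe that for this choice of $E$, the permutation $u_E$ is the identity: writing $E = (1,\ldots,a)$ and $E^c = (a+1,\ldots,n)$, the associated $u_E$ sends $1\mapsto 1, \ldots, a\mapsto a$ and $a+1\mapsto a+1,\ldots,n\mapsto n$, so $u_E = 1$ and $\ell(u_E) = 0$. Consequently $t^{\ell(u_E)}(T_{u_E^{-1}})^{-1} = t^0 \cdot T_1 = 1$, and the left-hand side of \eqref{eq:semistandarness1} collapses to just $h$. Thus the parabolic expansion of $h$ given in the corollary is exactly the expansion \eqref{eq:semistandarness1} from \autoref{semistandardness2} for this particular $E$.

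Next I would apply \autoref{semistandardness2} verbatim: it tells us that $h_F = 0$ whenever $F\otimes E$ is not semistandard, equivalently $h_F \neq 0$ only if $F\otimes E$ is semistandard. It therefore remains to translate the semistandardness of $F\otimes(1,\ldots,a)$ into the explicit condition $f_1 = 1,\ldots,f_a = a$.

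For this final combinatorial step, recall that $F = (f_1,\ldots,f_b)\in B(\varpi_b)$ is strictly increasing, so $f_i \geq i$ for every $i\in[a]$ (the $f_i$ are $b$ distinct elements of $[n]$ listed in increasing order). On the other hand, the tableau $F\otimes E$ places $F$ in its first column and $E = (1,\ldots,a)$ in its second column, occupying the top $a$ rows; since $b\geq a$, semistandardness requires the rows to be weakly increasing, i.e.\ $f_i \leq e_i = i$ for each $i\in[a]$. Combining $f_i\geq i$ with $f_i\leq i$ forces $f_i = i$ for all $i\in[a]$. Conversely, if $f_i = i$ for $i\in[a]$ then the first $a$ rows read $(i,i)$ and the remaining rows of the first column contain only a first entry, so $F\otimes E$ is semistandard. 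This gives the desired equivalence, and the corollary follows. The argument is essentially immediate once $u_{(1,\ldots,a)} = 1$ is recognized; the only point requiring a moment's care is the observation that the strict increase of $F$ already guarantees $f_i\geq i$, which is what pins the inequality $f_i\leq i$ down to an equality.
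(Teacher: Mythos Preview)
Your proof is correct and follows exactly the paper's approach: the paper simply states that the corollary is the special case $E = (1,\ldots,a)$ of \autoref{semistandardness2}, and you have spelled out this specialization in full detail, including the combinatorial verification that $F\otimes(1,\ldots,a)$ is semistandard if and only if $f_i = i$ for $i\in[a]$. One minor notational caution: when you write ``$t^0\cdot T_1 = 1$'' you mean $T_w$ for $w$ the identity permutation, which in the paper's conventions would clash with the generator $T_1 = T_{s_1}$; it is clearer to simply say $T_{u_E^{-1}} = T_{\mathrm{id}} = 1$.
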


\subsection{Definition of $\Psi_T$} 

%
%


%

\vspace{0.25cm}
Let $T \in B(\varpi_{\ell_r})\otimes \ldots \otimes B(\varpi_{\ell_1})$ with $1 \leq \ell_1 \leq 
\ldots \leq \ell_r \leq n$. We will now give a recursive definition of an element $\Psi_T$ for each such $T$ 

	For a column $C \in B(\varpi_\ell)$, define \begin{equation}\label{Psi_C}
	\Psi_C = t^{\ell(u_C)}(T_{u_C^{-1}})^{-1} \one_{\varpi_\ell} . 
\end{equation}

Suppose $T = C \otimes S$ with $C \in B(\varpi_\ell)$ and $S \in B(\varpi_{\ell_r}) \otimes \ldots \otimes B(\varpi_{\ell_1})$, where $1 \leq \ell_1 \leq \ldots \leq \ell_r \leq \ell \leq n$.   

\noindent By \autoref{Hnparabolicdecomp}, write $$ \Psi_S = \sum_{E \in B(\varpi_{\ell})} T_{u_E}h_{E,S} \qquad \hbox{ with } \qquad h_{E,S} \in H_{n,\varpi_{\ell}} \,.$$
	
\noindent Then define  \begin{equation}
	\Psi_T = t^{\ell(u_C)}(T_{u_C^{-1}})^{-1}h_{C,S} \,.
\end{equation} 

%
%
\noindent
This defines $\Psi_T \in H_n$ for every $T$. 

The following theorem says that the function $\Psi$ picks up the Cartan component (i.e, the semistandard Young tableaux) from tensor product of column crystals. 

\begin{theorem}\label{10Xlambda}
	Let $T \in B(\varpi_{\ell_r})\otimes \ldots \otimes B(\varpi_{\ell_1})$ with $1 \leq \ell_1 \leq 
	\ldots \leq \ell_r \leq n$. \begin{enumerate}
		\item[(a)]\label{PsiT0ifnotSSYT}  If $T$ is not semistandard then $\Psi_T = 0$.
		\item[(b)] For $\ell \in [n]$,  \begin{equation}\label{PsiXomega}
			\Psi_{T} X^{\varpi_{\ell}} = \sum_{C \in B(\varpi_{\ell})} X^C \Psi_{C\otimes T} \,.
		\end{equation}
		\item[(c)] Let $\lambda \in (\ZZ_{\geq 0}^n)_+$.  Then \begin{equation}\label{eq:10Xlambda}
			 \one_0 X^{\lambda} = \sum_{T \in B(\lambda)} X^{T} \Psi_{T}\,.
		\end{equation} 
	\end{enumerate}
\end{theorem}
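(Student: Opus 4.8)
The plan is to prove the three parts together by induction on the number of columns $r$, since the definition of $\Psi_T$ is recursive in exactly this way. The three statements reinforce one another: part (b) is the engine that converts $\one_0 X^\lambda$ column-by-column into the tableau sum of part (c), while part (a) is what guarantees that only \emph{semistandard} tableaux survive in that sum. So I would organize the argument so that the inductive step for (c) uses (b) for one fundamental weight $\varpi_\ell$, and the inductive step for (b) in turn uses (a) to discard non-semistandard terms.

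First I would establish part (a). Suppose $T = C \otimes S$ is not semistandard. There are two cases. If $C \otimes S_{\text{first column}}$ already fails to be semistandard (the leftmost two columns violate the column-strict/row-weak conditions), then this should follow directly from \autoref{semistandardness2}: writing $\Psi_S = \sum_E T_{u_E} h_{E,S}$, the relevant coefficient $h_{C, S}$ vanishes because $C \otimes E$ is not semistandard for the $E$ forced by the shape, so $\Psi_T = t^{\ell(u_C)}(T_{u_C^{-1}})^{-1} h_{C,S} = 0$. If instead the violation occurs deeper inside $S$, then $\Psi_S = 0$ already by the inductive hypothesis, so every coefficient $h_{E,S}$ is zero and hence $\Psi_T = 0$. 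The key tool here is precisely \autoref{semistandardness2}, which was engineered (via the tableau criterion for Bruhat order, \autoref{bruhattableaucondition}) to force semistandardness of adjacent columns.

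Next, for part (b), I would start from \autoref{Hnparabolicdecomp} to write $\Psi_T = \sum_{E \in B(\varpi_\ell)} T_{u_E} h_{E,T}$, and compute $\Psi_T X^{\varpi_\ell}$. The idea is to commute $X^{\varpi_\ell}$ leftward past each $T_{u_E}$ using \autoref{TuFXomegak}, which gives $T_{u_E} X^{\varpi_\ell} = X^E t^{\ell(u_E)} (T_{u_E^{-1}})^{-1}$, and to note that $h_{E,T}$ commutes with $X^{\varpi_\ell}$ by \autoref{TwXmucommutesifwfixesmu} since $h_{E,T} \in H_{n,\varpi_\ell}$ and every $w \in S_{n,\varpi_\ell}$ fixes $\varpi_\ell$. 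This produces $\Psi_T X^{\varpi_\ell} = \sum_E X^E\, t^{\ell(u_E)} (T_{u_E^{-1}})^{-1} h_{E,T}$, and by the very definition of $\Psi_{E \otimes T}$ the inner factor is exactly $\Psi_{E \otimes T}$. Renaming $E$ to $C$ yields \eqref{PsiXomega}. The one thing I must verify is that the shape condition $\ell \geq \ell_r$ holds so that $E \otimes T$ is an admissible tensor product and the recursive definition of $\Psi_{E \otimes T}$ applies; this is built into the hypothesis of the theorem.

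Finally, part (c) follows by induction on $r$ where $\lambda = \varpi_{\ell_r} + \cdots + \varpi_{\ell_1}$. The base case $r=1$ is \eqref{10Xomegak} together with the definition of $\Psi_C$ in \eqref{Psi_C}: indeed $\one_0 X^{\varpi_\ell} = \sum_{C} X^C t^{\ell(u_C)}(T_{u_C^{-1}})^{-1}\one_{\varpi_\ell} = \sum_C X^C \Psi_C$, and every single column is semistandard so $B(\varpi_\ell)$ is the full index set. For the inductive step, write $X^\lambda = X^{\varpi_{\ell_r}} X^{\mu}$ with $\mu = \varpi_{\ell_{r-1}} + \cdots + \varpi_{\ell_1}$, apply the inductive hypothesis to get $\one_0 X^\mu = \sum_{S \in B(\mu)} X^S \Psi_S$, then multiply on the right by $X^{\varpi_{\ell_r}}$ and apply part (b) to each $\Psi_S$:
\begin{align*}
\one_0 X^\lambda = \sum_{S \in B(\mu)} X^S \Psi_S X^{\varpi_{\ell_r}} = \sum_{S \in B(\mu)} X^S \sum_{C \in B(\varpi_{\ell_r})} X^C \Psi_{C \otimes S} = \sum_{S, C} X^{C \otimes S} \Psi_{C \otimes S}.
\end{align*}
The outer sum ranges over all $C \otimes S$ with $C \in B(\varpi_{\ell_r})$ and $S \in B(\mu)$, i.e. over $B(\varpi_{\ell_r}) \otimes B(\mu)$, but by part (a) the terms where $C \otimes S$ is not semistandard contribute $\Psi_{C\otimes S} = 0$, so the sum collapses to $\sum_{T \in B(\lambda)} X^T \Psi_T$ as desired. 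I expect the main obstacle to be bookkeeping the shape/admissibility hypotheses correctly — in particular ensuring at each recursion that the newly prepended column $\varpi_{\ell_r}$ is at least as long as all columns in $S$ so that the tensor product and the recursive definition of $\Psi$ are legitimate, and confirming that $B(\lambda)$ is genuinely the semistandard subset of $B(\varpi_{\ell_r}) \otimes B(\mu)$ cut out by part (a) rather than some smaller or larger set. The algebraic manipulations themselves are clean once \autoref{TuFXomegak} and \autoref{TwXmucommutesifwfixesmu} are in hand.
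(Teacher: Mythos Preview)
Your proposal is correct and follows essentially the same route as the paper: part (a) by induction on the number of columns using \autoref{semistandardness2} (with the two subcases you describe), part (b) by the parabolic expansion of $\Psi_T$ together with \autoref{TuFXomegak} and \autoref{TwXmucommutesifwfixesmu}, and part (c) by induction peeling off one fundamental weight at a time via (b) and discarding non-semistandard terms via (a). The only small clarification is that in your argument for (a) you should make explicit that $\Psi_S$ itself has the form $t^{\ell(u_{C_r})}(T_{u_{C_r}^{-1}})^{-1} h_{C_r,S'}$ (by its recursive definition), since that is the specific input shape \autoref{semistandardness2} requires before you can conclude $h_{C,S}=0$ when $C\otimes C_r$ is not semistandard.
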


\begin{proof}
		\item[(a)]  
		If $T$ has only one column then $T$ is semistandard, so the statement is true. 
		
		Suppose $T$ has two columns. Write $T = F \otimes E$ with $F \in B(\varpi_{b})$ and $E \in B(\varpi_a)$. Using the definition of $\Psi$ and \autoref{Hnparabolicdecomp} write $$ \Psi_E = t^{\ell(u_E)}(T_{u_E^{-1}})^{-1} \one_{\varpi_a} = \sum_{C \in B(\varpi_b)}T_{u_C}h_{C,E} \quad \hbox{ with } \quad h_{C,E} \in H_{n,\varpi_b} \,.$$
		\noindent
		Taking $h = \one_{\varpi_a}$ in \autoref{semistandardness2}, if $F \otimes E$ is not semistandard then $h_{F,E} = 0$, so $$\Psi_{T} = t^{\ell(u_F)}(T_{u_F^{-1}})^{-1} h_{F,E} = 0\,.$$
		
		Assume now that the statement is true for $r \geq 2$ columns, i.e, $\Psi_S = 0$ if $S$ is not semistandard and $S$ has $\leq r$ columns. We will prove the statement for $r+1$ columns.  
		
		\noindent Let $T = C \otimes C_r \ldots \otimes C_1$ with $C \in B(\varpi_\ell)$, $C_i \in B(\varpi_{\ell_i})$ for $i \in [r]$,  and $1 \leq \ell_1 \leq \ldots \leq \ell_r \leq \ell \leq n$. Let $S = C_r \otimes \ldots \otimes C_1$ and $S' = C_{r-1}\otimes \ldots \otimes C_1$. 
		
		\noindent Using the definition of $\Psi$ and \autoref{Hnparabolicdecomp}, write
		$$ \Psi_{S} = t^{\ell(u_{C_r})}(T_{u_{C_r}^{-1}})^{-1} h_{C_{r},S'} = \sum_{C \in B(\varpi_{\ell})} T_{u_C} h_{C,S} \quad \hbox{ with } \quad h_{C,S} \in H_{n,\varpi_{\ell}}\,. $$
		
		\noindent Taking $h = h_{C_r,S}$, and $E = C_r$ in \autoref{semistandardness2}, if $C\otimes C_r$ is not semistandard then $h_{C,S} = 0,$ so $$\Psi_{T} = t^{\ell(u_C)}(T_{u_C^{-1}})^{-1}h_{C,S} = 0\,.$$
		
		\item[(b)]  
		Write $$ \Psi_T = \sum_{C \in B(\varpi_{\ell})} T_{u_C}h_{C,T} \qquad \hbox{ with } h_{C,T} \in H_{n,\varpi_{\ell}} \,.$$
		
		\noindent By \autoref{TwXmucommutesifwfixesmu}, $$h_{C,T}X^{\varpi_{\ell}} = X^{\varpi_{\ell}} h_{C,T}\,.$$
		
		\noindent Using \autoref{TuFXomegak},
		\begin{align*}
			\Psi_{T} X^{\varpi_{\ell}} &= \sum_{ C \in B(\varpi_{\ell}) } T_{u_C}h_{C,T} X^{\varpi_{\ell}} = \sum_{ C \in B(\varpi_{\ell}) } T_{u_C}X^{\varpi_{\ell}}h_{C,T} \\&= \sum_{ C \in B(\varpi_{\ell}) } X^C t^{\ell(u_C)}(T_{u_C^{-1}})^{-1}h_{C,T}  = \sum_{ C \in B(\varpi_{\ell}) } X^C \Psi_{C\otimes T} \,.
		\end{align*}
		
	\item[(c)] 
	Let $\lambda = \varpi_{\ell_1}+\ldots+\varpi_{\ell_k}$ with $1\leq \ell_1 \leq \ldots \leq \ell_k \leq n$. Then by \eqref{10Xomegak} and \eqref{Psi_C}, $$ \one_0 X^{\varpi_{\ell_1}} = \sum_{C \in B(\varpi_{\ell_1})} X^C t^{\ell(u_C)}(T_{u_C^{-1}})^{-1} = \sum_{C \in B(\varpi_{\ell_1})} X^C \Psi_C \,.$$
		
		\noindent Let $j \in [k]$ and $\lambda^{(j)} = \varpi_{\ell_1}+ \ldots + \varpi_{\ell_j} $. Suppose that $$ \one_0 X^{\lambda^{(j)}} = \sum_{S \in B(\lambda^{(j)})} X^S \Psi_S \,. $$
		
		\noindent Then by \eqref{PsiXomega},
		\interdisplaylinepenalty=10000 \begin{align*}
			\one_0 X^{\lambda^{(j)}+\varpi_{\ell_{j+1}}} &= \sum_{S \in B(\lambda^{(j)})} X^S \Psi_S X^{\varpi_{\ell_{j+1}}} \\&= \sum_{S \in B(\lambda^{(j)})} X^S \sum_{\substack{C \in B(\varpi_{\ell_{j+1}}) \\ C\otimes S \text{ semistandard }}} X^C \Psi_{C\otimes S} = \sum_{T \in B(\lambda^{(j+1)})} X^T \Psi_T \,.
		\end{align*} \interdisplaylinepenalty=1000
		
	\noindent	This proves the statement by induction.

\end{proof}
\begin{example}
	Let $n = 3$. For $T \in B((2,1,0))$, we compute $\Psi_T$.
	
	Note that $\one_{\varpi_1} = 1+T_2$.
	
	Since \begin{align*}
		\Psi_{\ytableaushort{1}} = t^{\ell(u_{\ytableaushort{1}})}(T_{u_{\ytableaushort{1}}^{-1}})^{-1}\one_{\varpi_1} = 1+T_2 = T_{u_{\ytableaushort{1,2}}} + T_{u_{\ytableaushort{1,3}}}\,,
	\end{align*}
	then \begin{align*}
		\Psi_{\ytableaushort{11,2}} = 1 \,, \qquad \Psi_{\ytableaushort{11,3}} = tT_2^{-1}\,.
	\end{align*}
	
	Since \begin{align*}
		\Psi_{\ytableaushort{2}} &= tT_1^{-1}(1+T_2) = (T_1+1-t)(1+T_2) = 1\cdot tT_1^{-1} + T_2  \cdot (1-t) + T_1T_2 
		\\
		&= T_{u_{\ytableaushort{1,2}}} \cdot tT_1^{-1} + T_{u_{\ytableaushort{1,3}}}\cdot (1-t) + T_{u_{\ytableaushort{2,3}}}\,,
	\end{align*}
	then \begin{align*}
		\Psi_{\ytableaushort{12,2}} = tT_1^{-1} \,, \qquad \Psi_{\ytableaushort{12,3}} = tT_2^{-1}(1-t) \,, \qquad \Psi_{\ytableaushort{22,3}} = t^2T_1^{-1}T_2^{-1}\,.
	\end{align*}
	
	Since \begin{align*}
		\Psi_{\ytableaushort{3}} &= tT_2^{-1}tT_1^{-1}(1+T_2) = tT_2^{-1}tT_1^{-1}(t+tT_2^{-1}) = t^3T_2^{-1}T_1^{-1} + t^3T_2^{-1}T_1^{-1}T_2^{-1}
		\\
		&= t^3T_2^{-1}T_1^{-1} + t^3T_1^{-1}T_2^{-1}T_1^{-1} = (t+tT_1^{-1})t^2T_2^{-1}T_1^{-1} = (1+T_2)tT_2^{-1}tT_1^{-1} 
		\\
		&= (1+T_1)(T_2+1-t)tT_1^{-1} = 1 \cdot (1-t)(1+T_1)tT_1^{-1} + T_2 \cdot tT_1^{-1} + T_1T_2 \cdot tT_1^{-1}
		\\
		&= T_{u_{\ytableaushort{1,2}}}\cdot (1-t)(1+T_1)tT_1^{-1} + T_{u_{\ytableaushort{1,3}}}\cdot tT_1^{-1} + T_{u_{\ytableaushort{2,3}}}\cdot tT_1^{-1} \,,
	\end{align*}
	then \begin{align*}
		\Psi_{\ytableaushort{13,2}} = (1-t)(1+T_1)tT_1^{-1} = (1-t)(tT_1^{-1}+t) = (1-t)(1+T_1) \,,\\ \qquad \Psi_{\ytableaushort{13,3}} = tT_2^{-1}\cdot tT_1^{-1} \,,\qquad \Psi_{\ytableaushort{23,3}} = t^2T_1^{-1}T_2^{-1}\cdot tT_1^{-1}\,.
	\end{align*}
\end{example}

\section{Recursions for $\Psi_T$}
This section presents a recursive formula to compute the elements $\Psi_T \in H_n$, with an initial condition. The recursion aims to lower the rightmost column of $T$ which is not highest weight, thus by repeated application of the recursion one can compute $\Psi_T$ from that of a highest weight tableau.

\subsection{Initial Conditions}
A column $C\in B(\varpi_{\ell})$ of length $\ell$ is called \textit{highest weight} if $C = (1,\ldots,\ell)$. A tableau $T$ is called \textit{highest weight} if all its columns are highest weight. 

For $\lambda \in (\ZZ_{\geq 0}^n)_+$, there is a unique highest weight tableau $T^0_\lambda$ of shape $\lambda$, whose all entries in the $i$th row is $i$.

\begin{example}
	For $n = 3$ and $\lambda = (2,1,0)$, $$\ytableausetup{nosmalltableaux, boxsize=normal} T^0_{(2,1,0)} = \ytableaushort{11,2} \,.$$
\end{example}

\begin{remark}
	The highest weight column in $B(\varpi_\ell)$ is actually the lowest element in the poset $B(\varpi_\ell)$ with the partial order $\leq$ on columns from \eqref{eq:colposetdef}. The terminology highest weight comes from crystal basis theory, since the highest weight tableaux are the highest weight elements in the crystal of tableaux.
\end{remark}

Recall that for $\lambda \in (\ZZ_{\geq 0}^n)_+$, the element $\one_\lambda$ was defined in \eqref{1lambdadef}. For $1 \leq a \leq b \leq n$ define $$ \one_{[a,b]} = \sum_{w \in S_{[a,b]}} T_w \,. $$ If $\lambda = \varpi_{\ell_1} + \ldots + \varpi_{\ell_r}$ with $1 \leq \ell_1 \leq \ldots \leq \ell_r \leq n$ then $S_{n,\lambda} = S_{[1,\ell_1]} \times S_{[\ell_1+1,\ell_2]}\times \ldots\times S_{[\ell_r+1,n]}$. It follows that $$ \one_\lambda = \one_{[1,\ell_1]} \one_{[\ell_1+1,\ell_2]} \ldots \one_{[\ell_r+1,n]} \,.$$ 

\begin{proposition}\label{Psiinitialcondition}
	Let $T^0_\mu$ be the highest weight tableau of shape $\mu \in (\ZZ_{\geq 0}^n)_+$. Let $\ell \geq \ell(\mu)$ and let $F\in B(\varpi_{\ell})$ such that $F\otimes T^0_\mu$ is semistandard. Then $$\Psi_{F\otimes T^0_\mu} = t^{\ell(u_F)}(T_{u_F^{-1}})^{-1}\one_{\varpi_\ell+\mu}\,.$$
	In particular, $$ \Psi_{T^0_\lambda} = \one_\lambda\,,\qquad \hbox{ for } \lambda \in (\ZZ_{\geq 0}^n)_+ \,.$$
\end{proposition}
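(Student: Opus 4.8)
The plan is to prove the general statement $\Psi_{F\otimes T^0_\mu} = t^{\ell(u_F)}(T_{u_F^{-1}})^{-1}\one_{\varpi_\ell+\mu}$ by induction on the number of columns in $\mu$, and then obtain the particular claim $\Psi_{T^0_\lambda} = \one_\lambda$ as the case $F = (1,\ldots,\ell)$ combined with a final bookkeeping step. The base case is when $\mu$ has no columns, i.e. $T^0_\mu$ is empty; then $F\otimes T^0_\mu = F$ and the claim reduces to the very definition \eqref{Psi_C} of $\Psi_F = t^{\ell(u_F)}(T_{u_F^{-1}})^{-1}\one_{\varpi_\ell}$, since $\varpi_\ell + \mu = \varpi_\ell$ here. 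For the inductive step I would write $T^0_\mu = E \otimes T^0_{\mu'}$ where $E = (1,\ldots,a)$ is the leftmost (highest weight) column of $T^0_\mu$, with $a = \ell(\mu)$, so that $F \otimes T^0_\mu = F \otimes E \otimes T^0_{\mu'}$, and apply the recursive definition of $\Psi$.

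The key computation is as follows. By induction applied to $E \otimes T^0_{\mu'}$ (noting $a \geq \ell(\mu')$ and $E\otimes T^0_{\mu'}$ is semistandard), we have
\begin{equation*}
\Psi_{E\otimes T^0_{\mu'}} = t^{\ell(u_E)}(T_{u_E^{-1}})^{-1}\one_{\varpi_a + \mu'}\,.
\end{equation*}
Since $E = (1,\ldots,a)$ is highest weight, $u_E = 1$ and $\varpi_a + \mu' = \mu$, so this simplifies to $\Psi_{E\otimes T^0_{\mu'}} = \one_\mu$. Now I must expand $\one_\mu$ parabolically as $\sum_{G\in B(\varpi_\ell)} T_{u_G} h_{G}$ with $h_G \in H_{n,\varpi_\ell}$ and read off $h_F$. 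Here I would invoke \autoref{Hnparabolicdecomp2}: since $\one_\mu \in H_{n,\mu}$ and $\mu = \varpi_a + \mu'$ has first column length $a = \ell(\mu) \leq \ell$, the coefficient $h_G$ is nonzero only when $G$ has its first $a$ entries equal to $1,\ldots,a$. Because $F\otimes T^0_\mu$ is semistandard and $F \in B(\varpi_\ell)$ with $\ell \geq \ell(\mu) = a$, the column $F$ indeed satisfies $f_1 = 1,\ldots,f_a = a$, so $h_F$ is the relevant nonzero coefficient and the recursion gives $\Psi_{F\otimes T^0_\mu} = t^{\ell(u_F)}(T_{u_F^{-1}})^{-1} h_F$.

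The main obstacle, and the heart of the argument, is to identify $h_F$ explicitly and show it equals $\one_{\varpi_\ell + \mu}$ up to the prefactor. I expect the cleanest route is to compute the parabolic expansion of $\one_\mu$ with respect to $S_{n,\varpi_\ell}$ directly: factoring $\one_\mu = \one_{[1,\ell_1]}\cdots\one_{[\ell_r+1,n]}$ and grouping the factors according to whether the corresponding blocks lie inside $[1,\ell]$ or $[\ell+1,n]$, one isolates a ``quotient part'' $\one^{?}$ contributing the $T_{u_G}$ terms and a remaining parabolic part lying in $H_{n,\varpi_\ell}$. The term indexed by the highest weight $G_0 = (1,\ldots,\ell)$ carries $u_{G_0} = 1$, and its coefficient should be exactly $\one_{\varpi_\ell+\mu}$ — the symmetrizer over the common refinement $S_{n,\varpi_\ell}\cap S_{n,\mu} = S_{n,\varpi_\ell + \mu}$. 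Verifying that the coefficient attached to a general admissible $F$ is the \emph{same} element $\one_{\varpi_\ell+\mu}$ (independent of $F$) is the delicate point; I would argue this by factoring $\one_\mu$ as $(\text{min-length-coset sum in } S_{[a+1,\,\cdot]})\cdot \one_{\varpi_\ell + \mu}$ in the appropriate block, using \autoref{Snfactorization} and \eqref{TuveqTuTv} to peel off the $T_{u_F}$ prefactor cleanly. Finally, specializing $F = (1,\ldots,\ell)$ gives $u_F = 1$ and $\varpi_\ell + \mu$ equal to the full shape $\lambda$, yielding $\Psi_{T^0_\lambda} = \one_\lambda$.
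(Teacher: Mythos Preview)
Your proposal is correct and follows essentially the same route as the paper: both arguments induct on the number of columns, reduce to $\Psi_{T^0_\mu} = \one_\mu$, and then obtain the parabolic expansion $\one_\mu = \sum_{F} T_{u_F}\,\one_{\varpi_\ell+\mu}$ (over $F$ with $f_1=1,\dots,f_a=a$, $a=\ell(\mu)$) by factoring $\one_{[a+1,n]}$ via \autoref{Snfactorization} and commuting $T_{u_F}$ past the block symmetrizers $\one_{[1,\ell_1]}\cdots\one_{[\ell_{k-1}+1,a]}$. The paper makes this commutation explicit---since such $u_F$ fix $[1,a]$ pointwise one has $wu_F=u_Fw$ for $w\in S_{[1,a]}$ and hence $T_wT_{u_F}=T_{u_F}T_w$---which is the only point your sketch leaves implicit.
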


\begin{proof}
	
	Suppose $C = (1,\ldots,\ell) \in B(\varpi_{\ell})$ is highest weight for some $\ell\in [n]$. Then $u_C = 1$, hence by \eqref{Psi_C}, $$ \Psi_{T^0_{\varpi_{\ell}}} = \Psi_C = t^{\ell(u_C)}(T_{u_C^{-1}})^{-1}\one_{\varpi_{\ell}} = \one_{\varpi_{\ell}} \,.  $$
	
	Let $k \geq 1$. Suppose $\lambda = \varpi_{\ell_{k+1}}+ \varpi_{\ell_k}+ \ldots + \varpi_{\ell_1}$, with $1 \leq \ell_1 \leq \ldots \leq \ell_{k} \leq \ell_{k+1} \leq n$. Let $\mu = \varpi_{\ell_k} + \ldots + \varpi_{\ell_1}$. Assume that $\Psi_{T^0_{\mu}} = \one_{\mu}$. 
	
	\noindent
	Since $S_{n,\varpi_{\ell_k}} = S_{[1,\ell_k]} \times S_{[\ell_k+1,n]}$, $S_{[1,\ell_1]}, \ldots, S_{[\ell_{k-1}+1,\ell_k]}, S_{[\ell_k+1,n]} \subseteq S_{n,\varpi_{\ell_k}}$. Then
	$$ \Psi_{T^0_\mu} = \one_\mu = \one_{\varpi_{\ell_1} + \ldots + \varpi_{\ell_{k}}} = \one_{[1,\ell_1]}\one_{[\ell_1+1,\ell_2]}  \ldots \one_{[\ell_{k-1}+1,\ell_k]}\one_{[\ell_k+1,n]} \in H_{n,\varpi_{\ell_{k}}} \,.$$
By \autoref{Snfactorization}, $$ \one_{[\ell_k+1,n]} = \Bigg(\sum_{\substack{F \in B(\varpi_{\ell_{k+1}}) \\ f_1 = 1,\ldots,f_{\ell_k}= \ell_k }}T_{u_F} \Bigg) \one_{[\ell_{k+1},n]} \,.$$  If $F \in B(\varpi_{\ell_{k+1}})$ is such that $f_1 =1,\ldots ,f_{\ell_k} = \ell_k$, then $wu_F = u_Fw$ for all $w  \in S_{[1,\ell_1]}\times \ldots \times S_{[\ell_{k-1}+1,\ell_k]}$. So, $T_w T_{u_F} = T_{u_F}T_w$ for all such $w$ and $F$.
Therefore, \begin{align*}
		\Psi_{T^0_{\mu}} &= \one_{[1,\ell_1]}\one_{[\ell_1+1,\ell_2]}  \ldots \one_{[\ell_{k-1}+1,\ell_k]}\one_{[\ell_k+1,n]} 
		\\
		&=  \one_{[1,\ell_1]}\one_{[\ell_1+1,\ell_2]}  \ldots \one_{[\ell_{k-1}+1,\ell_k]}\Bigg(\sum_{\substack{F \in B(\varpi_{\ell_{k+1}}) \\ f_1 = 1,\ldots,f_{\ell_k}= \ell_k }}T_{u_F} \Bigg) \one_{[\ell_{k+1},n]} 
		\\
		&= \Bigg(\sum_{\substack{F \in B(\varpi_{\ell_{k+1}}) \\ f_1 = 1,\ldots,f_{\ell_k}= \ell_k }}T_{u_F} \Bigg)\one_{[1,\ell_1]}\one_{[\ell_1+1,\ell_2]}  \ldots \one_{[\ell_{k-1}+1,\ell_k]}\one_{[\ell_{k+1},n]} = \sum_{\substack{F \in B(\varpi_{\ell_{k+1}}) \\ f_1 = 1,\ldots,f_{\ell_k}= \ell_k }}T_{u_F}  \one_{\lambda}\,.
	\end{align*}
	
	\noindent
	So by the definition of $\Psi$, $$\Psi_{F\otimes T^0_{\mu}} = t^{\ell(u_F)}(T_{u_F^{-1}})^{-1}\one_{\lambda}\,,$$  for  $F \in B(\varpi_{\ell_{k+1}})$ such that  $f_1=1,\ldots,f_{\ell_k}=\ell_k\,.$ 
	
	
\end{proof}

\subsection{The recursions for $\Psi_T$}

Let $C \in B(\varpi_{\ell})$ for some $\ell \in [n]$. Define $$\sigma^j_C = \begin{cases}
	-1, & \hbox{ if } s_jC > C,
	\\
	+1, & \hbox{ if } s_jC \leq C,
\end{cases} = \begin{cases}
	-1, & \hbox{ if } j \in C, j+1 \notin C\,,
	\\
	+1, & \hbox{ if } \hbox{ otherwise }.
\end{cases} $$


\begin{example}
	\ytableausetup{nosmalltableaux,centertableaux}
	$$ \hbox{ If }\, C = \ytableaushort{1,3,4,7}\,, \quad \hbox{ then } \qquad \sigma^1_C = -1,\, \sigma^2_C = +1,\, \sigma^3_C = +1,\, \sigma^4_C = -1,\, \sigma^5_C = +1\,.  $$
\end{example}

%
%

Let $1 \leq \ell_1 \leq \ldots \leq \ell_r \leq n$. For $T = C_r \otimes \ldots \otimes C_1 \in B(\varpi_{\ell_r})\otimes \ldots \otimes B(\varpi_{\ell_1})$, with $C_i \in B(\varpi_{\ell_i})$ for $i \in [r]$, and $k \in [r]$, define \begin{equation}
	\Omega^j_k(T) = C_r  \otimes \ldots \otimes C_{k+1} \otimes s_j C_k \otimes \ldots s_j C_1\,.
\end{equation} 

The following theorem is the Hecke algebra lift of Klostermann's recursions \autoref{prop:klostermannrecs}.

\begin{theorem}\label{Psirecrcols}
Let $T = C_r \otimes \ldots \otimes C_1 \in B(\varpi_{\ell_r})\otimes \ldots \otimes B(\varpi_{\ell_1})$, with $C_i \in B(\varpi_{\ell_i})$ for $i \in [r]$. 
	
	\begin{enumerate}
		\item[(a)] If $T$ is not semistandard then $\Psi_T = 0$.
		\item[(b)] If $T$ is highest weight of shape $\lambda (= \sum\limits_{i = 1}^{r} \varpi_{\ell_i})$, then $\Psi_T = \one_\lambda$.
		\item[(c)] Suppose $\mu \in (\ZZ_{\geq 0}^n)_+$ and $T \otimes T^0_\mu$ is semistandard. 
		
		If $j\notin C_1,j+1 \in C_1$ then 
		\begin{align}\label{eq:mainrec}
			\Psi_{T\otimes T^0_\mu} = \begin{cases}
				T_j\Psi_{\Omega^j_r(T)\otimes T^0_\mu} + \displaystyle \sum\limits_{\substack{r > k \geq 1 \\ \sigma^j_{C_k}\sigma^j_{C_{k+1}}=-1}} \sigma^j_{C_k} (1-t) \Psi_{\Omega^j_k(T)\otimes T^0_\mu}\,,& \hbox{ if } j \in C_r, j+1 \notin C_r \,,
				\vspace{1em}
				\\
				tT_j^{-1}\Psi_{\Omega^j_r(T)\otimes T^0_\mu} + \displaystyle \sum\limits_{\substack{r > k \geq 1 \\ \sigma^j_{C_k}\sigma^j_{C_{k+1}}=-1}} \sigma^j_{C_k} (1-t) \Psi_{\Omega^j_k(T)\otimes T^0_\mu}\,,&  \hbox{otherwise}\,.
			\end{cases}
		\end{align} 
		
	\end{enumerate}
\end{theorem}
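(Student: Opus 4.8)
Parts (a) and (b) are already in hand: (a) is \autoref{10Xlambda}(a), and (b) is the ``in particular'' clause of \autoref{Psiinitialcondition}, since a highest weight $T$ of shape $\lambda$ is exactly $T^0_\lambda$. So the real content is the recursion (c), which I would prove by induction on the number of columns $r$ of $T$. For the base case $r=1$ we have $T=C_1$, and the hypothesis $j\notin C_1,\,j+1\in C_1$ places us in the ``otherwise'' branch with empty correction sum, so the claim reads $\Psi_{C_1\otimes T^0_\mu}=tT_j^{-1}\Psi_{s_jC_1\otimes T^0_\mu}$. Both $C_1\otimes T^0_\mu$ and $s_jC_1\otimes T^0_\mu$ are semistandard (replacing $j+1$ by $j$ happens below row $\ell(\mu)$), so \autoref{Psiinitialcondition} writes each side as $t^{\ell(u_{\cdot})}(T_{u_\cdot^{-1}})^{-1}\one_{\varpi_{\ell_1}+\mu}$; by \autoref{lemma:sjuE} we have $u_{s_jC_1}=s_ju_{C_1}$ with $\ell(u_{C_1})=\ell(u_{s_jC_1})+1$, so $(T_{u_{C_1}^{-1}})^{-1}=T_j^{-1}(T_{u_{s_jC_1}^{-1}})^{-1}$ and the identity follows after extracting one factor of $t$.

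For the inductive step I would peel off the leftmost column. Setting $S=C_{r-1}\otimes\ldots\otimes C_1$ and $S'=S\otimes T^0_\mu$, the definition of $\Psi$ gives $\Psi_{C_r\otimes S'}=t^{\ell(u_{C_r})}(T_{u_{C_r}^{-1}})^{-1}h$, where $h$ is the coefficient of $T_{u_{C_r}}$ in the level-$\varpi_{\ell_r}$ parabolic expansion (\autoref{Hnparabolicdecomp}) of $\Psi_{S'}$. Since $S'$ has $r-1$ columns and still satisfies $j\notin C_1,\,j+1\in C_1$, the inductive hypothesis expresses $\Psi_{S'}$ as $\Phi\,\Psi_{\Omega^j_{r-1}(S)\otimes T^0_\mu}$ plus corrections $\sigma^j_{C_k}(1-t)\Psi_{\Omega^j_k(S)\otimes T^0_\mu}$ over $1\le k\le r-2$ with $\sigma^j_{C_k}\sigma^j_{C_{k+1}}=-1$, where $\Phi=T_j$ if $\sigma^j_{C_{r-1}}=-1$ and $\Phi=tT_j^{-1}$ otherwise. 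The key identifications are $C_r\otimes\Omega^j_k(S)=\Omega^j_k(T)$ for $k\le r-1$ and $s_jC_r\otimes\Omega^j_{r-1}(S)=\Omega^j_r(T)$; together with the definition of $\Psi$ they guarantee that extracting the $T_{u_{C_r}}$-coefficient of each inductive-hypothesis term and reattaching $t^{\ell(u_{C_r})}(T_{u_{C_r}^{-1}})^{-1}$ produces precisely a $\Psi$ of the desired $\Omega^j_k(T)\otimes T^0_\mu$. The correction terms (plain scalar coefficients) pass through untouched for $k\le r-2$.

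The single computational input is a reshuffling lemma: for a level-$\varpi_{\ell_r}$ expansion $g=\sum_E T_{u_E}g_E$, the coefficient of $T_{u_{C_r}}$ in $T_jg$ is $(t-1)g_{C_r}+g_{s_jC_r}$ when $j\notin C_r,\,j+1\in C_r$; it is $t\,g_{s_jC_r}$ when $j\in C_r,\,j+1\notin C_r$; and it is $T_kg_{C_r}$, with $k=u_{C_r}^{-1}(j)$ and $s_k\in S_{n,\varpi_{\ell_r}}$, when $s_jC_r=C_r$. Each case is read off from \eqref{T_iT_w} and the statements $s_ju_E=u_{s_jE}$ and $s_ju_E=u_Es_k$ of \autoref{lemma:sjuE}. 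Feeding this into the peeled expression, and using \autoref{lemma:sjuE} again to rewrite $t^{\ell(u_{C_r})}(T_{u_{C_r}^{-1}})^{-1}g_{s_jC_r}$ and $t^{\ell(u_{C_r})}(T_{u_{C_r}^{-1}})^{-1}T_kg_{C_r}$ in terms of $\Psi_{\Omega^j_r(T)\otimes T^0_\mu}$ and $\Psi_{\Omega^j_{r-1}(T)\otimes T^0_\mu}$ (in the fixed case via $(T_{u_{C_r}^{-1}})^{-1}T_k=T_j(T_{u_{C_r}^{-1}})^{-1}$, which comes from $s_ku_{C_r}^{-1}=u_{C_r}^{-1}s_j$), yields the leading $\Phi'$-term, where $\Phi'=T_j$ if $\sigma^j_{C_r}=-1$ and $\Phi'=tT_j^{-1}$ otherwise, times $\Psi_{\Omega^j_r(T)\otimes T^0_\mu}$, plus a scalar multiple of $\Psi_{\Omega^j_{r-1}(T)\otimes T^0_\mu}$.

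The proof then closes by a sign check on that leftover $\Psi_{\Omega^j_{r-1}(T)\otimes T^0_\mu}$: across the three cases for $C_r$ and the two choices of $\Phi$, its coefficient works out to $\sigma^j_{C_{r-1}}(1-t)$ exactly when $\sigma^j_{C_{r-1}}\sigma^j_{C_r}=-1$ and to $0$ otherwise; in the fixed case one uses $\Omega^j_r(T)=\Omega^j_{r-1}(T)$ together with $tT_j^{-1}=T_j+1-t$ (from \eqref{tT_iinv}) so the spurious term cancels against the newly created $k=r-1$ correction. This supplies precisely the missing $k=r-1$ summand of the target sum, and the leading term matches because $\Phi'$ depends on $C_r$ exactly as the branch does. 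The main obstacle is this reshuffling-plus-sign accounting: verifying in all three mutually exclusive cases for $C_r$ (raised, lowered, or fixed by $s_j$) and both cases for $\Phi$ that the $(1-t)$ contributions always land on $\Omega^j_{r-1}(T)$ carrying the correct sign $\sigma^j_{C_{r-1}}$; everything else is routine manipulation licensed by \autoref{lemma:sjuE}, \eqref{T_iT_w}, and the definition \eqref{Psi_C}.
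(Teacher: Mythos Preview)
Your argument is correct and shares the paper's overall architecture—induction on the number of columns, peeling off the leftmost column, and a case analysis governed by $(\sigma^j_{C_{r-1}},\sigma^j_{C_r})$—but the computational engine differs. The paper passes through the affine part of $H$: it uses \eqref{PsiXomega} to write $\Psi_{C_r\otimes S'}$ as the coefficient of $X^{C_r}$ in $\Psi_{S'}X^{\varpi_{\ell_r}}$, then invokes the Fundamental Lemma (\autoref{TXPsi}), which computes how $T_j$ and $tT_j^{-1}$ act on $\sum_C X^C\Psi_{C\otimes K}$ and outputs the answer directly as a combination of $\Psi$'s. You instead stay entirely inside the finite Hecke algebra $H_n$, using the definition of $\Psi$ via parabolic expansions and your reshuffling lemma for the $T_{u_{C_r}}$-coefficient of $T_j\sum_E T_{u_E}g_E$, then translating $g_E$'s back to $\Psi$'s by hand via \autoref{lemma:sjuE}. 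The two are equivalent through \autoref{TuFXomegak}, which identifies $T_{u_C}X^{\varpi_\ell}$ with $X^C t^{\ell(u_C)}(T_{u_C^{-1}})^{-1}$; in effect \autoref{TXPsi} is the $X$-variable shadow of your reshuffling lemma with the translation step already absorbed. Your route is slightly more elementary (no $X$'s needed) at the cost of carrying the $g_E\!\leadsto\!\Psi$ translation and splitting $\sigma^j_{C_r}=+1$ into the ``lowered'' and ``fixed'' subcases, whereas the paper's packaging handles these uniformly and reduces the case count to four.
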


We prove \autoref{Psirecrcols} by induction on the number of columns $r$. In the next subsections we work out the base cases of $r=1$ and $r=2$.

\subsection{One-Column}

\begin{proposition}\label{Psirecess1col}
\item[(1)] Let $\lambda \in (\ZZ_{\geq 0}^n)_+$ and $\ell \geq \ell(\lambda)$. Let $E \in B(\varpi_{\ell})$, with $E\otimes T^0_\lambda$ semistandard. If $j\notin E,j+1 \in E$, then $$ \Psi_{E\otimes T^0_\lambda} = tT_j^{-1} \Psi_{s_jE\otimes T^0_\lambda}\,. $$ 

\item[(2)] If $E \in B(\varpi_{\ell})$ and $s_jE \leq E$, then $$ \Psi_E = tT_j^{-1}\Psi_{s_jE} \,.$$
\end{proposition}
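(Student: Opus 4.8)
The plan is to reduce both identities to a single computation in the finite Hecke algebra relating $t^{\ell(u_E)}(T_{u_E^{-1}})^{-1}$ and $t^{\ell(u_{s_jE})}(T_{u_{s_jE}^{-1}})^{-1}$, exploiting the combinatorial control over $u_E$ coming from \autoref{lemma:sjuE}. The two statements differ only in which symmetrizer is appended on the right and in an extra case ($s_jE=E$) that arises in part (2).

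For part (1), since $E\otimes T^0_\lambda$ is semistandard and $\ell\geq \ell(\lambda)$, \autoref{Psiinitialcondition} gives $\Psi_{E\otimes T^0_\lambda}=t^{\ell(u_E)}(T_{u_E^{-1}})^{-1}\one_{\varpi_\ell+\lambda}$. I would first check that $s_jE\otimes T^0_\lambda$ is also semistandard: the hypothesis $j\notin E$, $j+1\in E$ gives $s_jE<E$ by \autoref{lemma:sjEcomparison}, so $(s_jE)_i\leq E_i$ for all $i$ by \eqref{eq:colposetdef}; since semistandardness against the fixed right factor $T^0_\lambda$ only constrains the left column to lie entrywise below the adjacent (highest weight) column, lowering $E$ to $s_jE$ preserves it. Hence \autoref{Psiinitialcondition} applies to $s_jE$ as well, with the \emph{same} parabolic $\one_{\varpi_\ell+\lambda}$. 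Both sides of (1) are then explicit, and it remains to prove the operator identity $t^{\ell(u_E)}(T_{u_E^{-1}})^{-1}=tT_j^{-1}\,t^{\ell(u_{s_jE})}(T_{u_{s_jE}^{-1}})^{-1}$.

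The core computation uses \autoref{lemma:sjuE}: when $j\notin E$, $j+1\in E$ it yields $u_{s_jE}=s_ju_E$ with $\ell(u_E)=\ell(u_{s_jE})+1$. Taking inverses and using $\ell(g)=\ell(g^{-1})$ gives $u_E^{-1}=u_{s_jE}^{-1}s_j$ with lengths adding, so $T_{u_E^{-1}}=T_{u_{s_jE}^{-1}}T_j$ by \eqref{TuveqTuTv}, whence $(T_{u_E^{-1}})^{-1}=T_j^{-1}(T_{u_{s_jE}^{-1}})^{-1}$. Since $t$ is central, $t^{\ell(u_E)}(T_{u_E^{-1}})^{-1}=t^{\ell(u_{s_jE})+1}T_j^{-1}(T_{u_{s_jE}^{-1}})^{-1}=tT_j^{-1}\,t^{\ell(u_{s_jE})}(T_{u_{s_jE}^{-1}})^{-1}$. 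Right-multiplying by $\one_{\varpi_\ell+\lambda}$ proves part (1), and right-multiplying the identical operator identity by $\one_{\varpi_\ell}$ (via the definition \eqref{Psi_C}) settles part (2) in the case $s_jE<E$.

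The remaining and trickiest case is part (2) with $s_jE=E$, where $\Psi_{s_jE}=\Psi_E$ and one must show $tT_j^{-1}\Psi_E=\Psi_E$. Here \autoref{lemma:sjuE} instead gives $s_ju_E=u_Es_k$ with $k=u_E^{-1}(j)$ and $\ell(s_ju_E)=\ell(u_E)+1$. The key observation is that $s_k\in S_{n,\varpi_\ell}$: since $s_jE=E$ means $j,j+1$ are both in $E$ or both outside, the positions $k=u_E^{-1}(j)$ and $k+1=u_E^{-1}(j+1)$ lie together in $[1,\ell]$ or together in $[\ell+1,n]$, so in either case $k\neq \ell$. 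Rewriting $s_ju_E=u_Es_k$ as an equality of $T$'s (lengths add on both sides) gives $T_{u_E^{-1}}T_j=T_kT_{u_E^{-1}}$, hence $T_j^{-1}(T_{u_E^{-1}})^{-1}=(T_{u_E^{-1}})^{-1}T_k^{-1}$; then $tT_j^{-1}(T_{u_E^{-1}})^{-1}\one_{\varpi_\ell}=(T_{u_E^{-1}})^{-1}\bigl(tT_k^{-1}\one_{\varpi_\ell}\bigr)=(T_{u_E^{-1}})^{-1}\one_{\varpi_\ell}$, using the parabolic symmetrizer identity $tT_k^{-1}\one_{\varpi_\ell}=\one_{\varpi_\ell}$ (the analogue of \eqref{Tw10} for $S_{n,\varpi_\ell}$, proved by the same pairing argument and valid since $s_k\in S_{n,\varpi_\ell}$). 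Multiplying by $t^{\ell(u_E)}$ gives $tT_j^{-1}\Psi_E=\Psi_E$. I expect this last case to be the main obstacle, both because it requires the bookkeeping showing $s_k$ lands in the parabolic and because it relies on the $t$-symmetrizer identity rather than on a direct length computation.
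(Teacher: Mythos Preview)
Your proposal is correct and follows essentially the same approach as the paper's proof: reduce to the operator identity via \autoref{lemma:sjuE} and \autoref{Psiinitialcondition}, then for the $s_jE=E$ case push $tT_j^{-1}$ through $(T_{u_E^{-1}})^{-1}$ to $tT_k^{-1}$ and absorb it into $\one_{\varpi_\ell}$. The only difference is that you explicitly verify $s_jE\otimes T^0_\lambda$ is semistandard before invoking \autoref{Psiinitialcondition} on the right-hand side, a point the paper leaves implicit.
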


\begin{proof}

\item[(1)] 
	Suppose $j \notin E$, $j+1 \in E$. By \autoref{lemma:sjuE}, $u_E = s_ju_{s_jE}$ and $\ell(u_{E}) = \ell(u_{s_jE})+1$. Then \begin{align*}
		\Psi_{E\otimes T^0_\lambda} = t^{\ell(u_E)}(T_{u_E^{-1}})^{-1} \one_{\varpi_\ell+\lambda} = t^{\ell(u_{s_jE})+1}T_j^{-1}(T_{u_{s_jE}^{-1}})^{-1} \one_{\varpi_\ell+\lambda} = tT_j^{-1} \Psi_{s_jE\otimes T^0_\lambda} \,.
	\end{align*}

\item[(2)] If $s_jE < E$ then $j \notin E, j+1 \in E$, so this is the statement of (1) with $\lambda = 0$. 

If $s_jE = E$ then by \autoref{lemma:sjuE} $s_ju_E = u_Es_k$ where $k = u_E^{-1}(j)$. Since either both $j,j+1 \in E$ or $j,j+1 \notin E$, so $k\in [1,\ell-1]\cup [\ell+1,n-1]$ and $s_k \in S_{n,\varpi_\ell}$. Hence $tT_k^{-1}\one_{\varpi_{\ell}} = \one_{\varpi_{\ell}}$. Then \begin{align*}
			tT_j^{-1}\Psi_{s_jE} &= tT_j^{-1}\Psi_E =  tT_j^{-1} t^{\ell(u_E)}(T_{u_E^{-1}})^{-1}\one_{\varpi_{\ell}} = t^{\ell(u_E)+1}(T_{u_E^{-1}s_j})^{-1}\one_{\varpi_{\ell}}  \\&= t^{\ell(u_E)+1}(T_{s_ku_E^{-1}})^{-1}\one_{\varpi_{\ell}} 
			= t^{\ell(u_E)+1}(T_kT_{u_E^{-1}})^{-1}\one_{\varpi_{\ell}}  \\&= t^{\ell(u_E)}(T_{u_E^{-1}})^{-1}tT_k^{-1}\one_{\varpi_{\ell}} = t^{\ell(u_E)}(T_{u_E^{-1}})^{-1}\one_{\varpi_{\ell}} = \Psi_{E} \,.
		\end{align*}

\end{proof}

\subsection{Fundamental Lemma}

The following lemma is the source of the signs in \eqref{eq:mainrec}.

\begin{lemma}\label{TXPsi}
	Let $K \in B(\varpi_{\ell_r})\otimes \ldots \otimes B(\varpi_{\ell_1})$ with $1 \leq \ell_1 \leq 
	\ldots \leq \ell_r \leq n$. Let $n \geq \ell \geq \ell_r$. Then \begin{align*}
		& \qquad \qquad tT_j^{-1}\bigg(\sum_{C \in B(\varpi_\ell)} X^C \Psi_{C\otimes K}\bigg) = \sum_{D \in B(\varpi_\ell)} X^D f_{D,K},
		\\
		& \quad \hbox{ and } \quad  T_j\bigg(\sum_{C \in B(\varpi_\ell)} X^C \Psi_{C\otimes K}\bigg) = \sum_{D \in B(\varpi_\ell)} X^D g_{D,K}, 
	\end{align*}
	with $f_{D,K} \,,g_{D,K} \in H_n$ given by
	\begin{align*}
		f_{D,K} = \begin{cases}
			tT_j^{-1}\Psi_{s_jD\otimes K}\,, & \hbox{ if } s_jD \leq D\,,
			\\
			T_j \Psi_{s_jD\otimes K} + (1-t)\Psi_{D\otimes K}\,, & \hbox{ if } s_jD \geq D\,,
		\end{cases}
	\end{align*}
	and \begin{align*}
		g_{D,K} = \begin{cases}
			tT_j^{-1}\Psi_{s_jD\otimes K} - (1-t)\Psi_{D\otimes K}\,, & \hbox{ if } s_jD \leq D\,,
			\\
			T_j \Psi_{s_jD\otimes K}\,, & \hbox{ if } s_jD \geq D\,. 
		\end{cases}		
	\end{align*}
\end{lemma}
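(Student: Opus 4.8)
The plan is to reduce both displayed identities to a single computation using the relation \eqref{tT_iinv}, $tT_j^{-1} = T_j + 1 - t$. Once the $T_j$-identity $T_j\big(\sum_{C}X^C\Psi_{C\otimes K}\big) = \sum_D X^D g_{D,K}$ is established, the $tT_j^{-1}$-identity follows immediately: writing $tT_j^{-1} = T_j + (1-t)$ and collecting the coefficient of each $X^D$ gives $f_{D,K} = g_{D,K} + (1-t)\Psi_{D\otimes K}$, which is exactly the stated $f_{D,K}$ in both branches (in the branch $s_jD\le D$ the extra term cancels the $-(1-t)\Psi_{D\otimes K}$ in $g_{D,K}$, while in the branch $s_jD\ge D$ it supplies the $+(1-t)\Psi_{D\otimes K}$). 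So I would devote the proof to the $T_j$-identity and finish with this remark.

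For the $T_j$-identity, the key input is how $T_j$ commutes past $X^C = X^{\vec C}$. I would classify each $C \in B(\varpi_\ell)$ by the value $(\vec C,\alpha_j)\in\{+1,0,-1\}$, which by \autoref{lemma:sjEcomparison} records whether $s_jC>C$, $s_jC=C$, or $s_jC<C$. From \autoref{TiXmu} and \autoref{Lusztig} I would record three commutation formulas: when $(\vec C,\alpha_j)=+1$, $T_jX^C = X^{s_jC}tT_j^{-1}$; when $(\vec C,\alpha_j)=0$, $T_jX^C = X^C T_j$; and when $(\vec C,\alpha_j)=-1$, since then $s_j\vec C = \vec C+\alpha_j$ so $X^{s_jC}=X^C X^{\alpha_j}$ and the rational factor in \autoref{Lusztig} collapses, $T_jX^C = X^{s_jC}T_j + (t-1)X^C$. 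I would also note the combinatorial fact that $s_j$ is an involution interchanging the $+1$-columns with the $-1$-columns and fixing the $0$-columns, because $(s_j\vec C,\alpha_j) = -(\vec C,\alpha_j)$.

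Substituting these into $\sum_C T_jX^C\Psi_{C\otimes K}$ and regrouping by the output monomial $X^D$ is the heart of the argument. Every term carrying $X^{s_jC}$ is reindexed by $D=s_jC$ (so $C=s_jD$, using that $s_j$ is an involution), and every term carrying $X^C$ keeps $D=C$; tracking the type forced on $D$ in each case gives: for $s_jD<D$, the contributions are the reindexed $+1$-term $tT_j^{-1}\Psi_{s_jD\otimes K}$ together with the $(t-1)X^C$ piece of the $-1$-term, yielding $g_{D,K}=tT_j^{-1}\Psi_{s_jD\otimes K}-(1-t)\Psi_{D\otimes K}$; for $s_jD>D$, the sole contribution is the reindexed $-1$-term $T_j\Psi_{s_jD\otimes K}$; and for $s_jD=D$, the sole contribution is $T_j\Psi_{D\otimes K}$. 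The type-$0$ case doubles as a consistency check: its value $T_j\Psi_{D\otimes K}$ agrees with both branches of the stated formula once $tT_j^{-1}=T_j+1-t$ is used, which is why the split may be recorded as $s_jD\le D$ versus $s_jD\ge D$ with overlap at equality.

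The only real obstacle is the bookkeeping in this regrouping: for each fixed $D$, isolating which columns $C$ feed the monomial $X^D$ and with which coefficient, and checking that the $s_j$-involution pairs the $+1$- and $-1$-types so that the signs $\pm(1-t)$ land where claimed. There is no structural difficulty beyond this careful matching, since every commutation and comparison statement needed is already supplied by \autoref{TiXmu}, \autoref{Lusztig}, and \autoref{lemma:sjEcomparison}.
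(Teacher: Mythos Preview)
Your proposal is correct and follows essentially the same approach as the paper: split the sum by the value $(\vec C,\alpha_j)\in\{+1,0,-1\}$, commute $T_j$ past $X^C$ using \autoref{TiXmu} and \autoref{Lusztig}, and regroup by the output monomial $X^D$. The one minor economy in your version is that you prove only the $T_j$-identity and then deduce the $tT_j^{-1}$-identity from $tT_j^{-1}=T_j+(1-t)$, whereas the paper carries out both computations separately; this is a cosmetic difference, not a different method.
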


\begin{proof}
	
	Note that since $tT_j^{-1} = T_j + 1-t$, the cases are consistent when $s_jD = D$.
	
	Recall that for a column $C = (c_1,\ldots,c_\ell)$  the element $\vec{C} = \varepsilon_{c_1}+\ldots+ \varepsilon_{c_\ell} \in \ZZ^n$. 
	
	\noindent Then by \autoref{lemma:sjEcomparison}, \begin{align*}
		s_jC \leq C \quad \hbox{ if and only if } \quad (\vec{C},\alpha_i) \in \{0,-1\}\,,
		\\
		s_jC \geq C \quad \hbox{ if and only if } \quad  (\vec{C},\alpha_i) \in \{0,+1\}\,.
	\end{align*}
	
	
	\item[(a)] 
	Using \eqref{tT_iinv} and  \autoref{TiXmu},   
		
		\begin{align*}
			&\qquad \sum_{C} tT_j^{-1}X^C \Psi_{C\otimes K}
			\\ 
			&= \sum_{C: (\vec{C},\alpha_j) = 0} tT_j^{-1}X^C \Psi_{C\otimes K} + \sum_{C: (\vec{C},\alpha_j) = 1}(tT_j^{-1} X^C \Psi_{C\otimes K} + tT_j^{-1} X^{s_jC}\Psi_{s_jC\otimes K}  )
			\\
			&= \sum_{C: (\vec{C},\alpha_j) = 0}tT_j^{-1}X^C  \Psi_{C\otimes K} + \sum_{C: (\vec{C},\alpha_j) = 1}((T_j+1-t) X^C \Psi_{C\otimes K} + tT_j^{-1} X^{s_jC}\Psi_{s_jC\otimes K}  )
			\\
			&= \sum_{C: (\vec{C},\alpha_j) = 0}X^C tT_j^{-1} \Psi_{C\otimes K} + \sum_{C: (\vec{C},\alpha_j) = 1}((X^{s_jC}tT_j^{-1}+(1-t)X^C) \Psi_{C\otimes K} + X^C T_j \Psi_{s_jC\otimes K}  )
			\\
			&= \sum_{C: (\vec{C},\alpha_j) = 0}X^C tT_j^{-1} \Psi_{C\otimes K} 
			+ \sum_{C: (\vec{C},\alpha_j) = 1} X^C (T_j \Psi_{s_jC\otimes K} + (1-t)\Psi_{C\otimes K}) 
			\\
			&\qquad+ \sum_{C:(\vec{C},\alpha_j) = -1} X^C  tT_j^{-1} \Psi_{s_jC\otimes K}\,.
		\end{align*}
		\noindent
		Therefore, the coefficient of $X^{D}$ in $\sum\limits_{C} tT_j^{-1}X^C \Psi_{C\otimes K}$ is \begin{align*}
			f_{D,K} = \begin{cases}
				tT_j^{-1}\Psi_{s_jD\otimes K}\,, & \hbox{ if } s_jD \leq D\,,
				\\
				T_j \Psi_{s_jD\otimes K} + (1-t)\Psi_{D\otimes K}\,, & \hbox{ if } s_jD \geq D\,.
			\end{cases}
		\end{align*}
		
		\item[(b)] 
		Using \autoref{TiXmu} and \eqref{quadTi}, \begin{align*}
			&\sum\limits_C T_j X^C \Psi_{C\otimes K} = \sum\limits_{C: (\vec{C},\alpha_j) = 0} T_j X^C \Psi_{C\otimes K}+\sum\limits_{C: (\vec{C},\alpha_j)=1} (T_j X^C \Psi_{C\otimes K} + T_j X^{s_jC} \Psi_{s_jC\otimes K} )
			\\
			&= \sum\limits_{C: (\vec{C},\alpha_j) = 0} T_j X^C \Psi_{C\otimes K}+\sum\limits_{C: (\vec{C},\alpha_j)=1} (T_j X^C \Psi_{C\otimes K} + T_j t^{-1}T_j X^C T_j \Psi_{s_jC\otimes K} )
			\\
			&= \sum\limits_{C: (\vec{C},\alpha_j) = 0} T_j X^C \Psi_{C\otimes K}+\sum\limits_{C: (\vec{C},\alpha_j)=1} (T_j X^C \Psi_{C\otimes K} + t^{-1}((t-1)T_j+t) X^C T_j \Psi_{s_jC\otimes K} )
			\\
			&= \sum\limits_{C: (\vec{C},\alpha_j) = 0} X^C T_j  \Psi_{C\otimes K}+\sum\limits_{C: (\vec{C},\alpha_j)=1} (X^{s_jC}tT_j^{-1} \Psi_{C\otimes K} + ((t-1)X^{s_jC} + X^CT_j)  \Psi_{s_jC\otimes K} )
			\\
			&= \sum\limits_{C: (\vec{C},\alpha_j) = 0} X^C T_j  \Psi_{C\otimes K} + \sum\limits_{C: (\vec{C},\alpha_j)=1} X^C T_j \Psi_{s_jC\otimes K} 
			\\
			&\qquad+ \sum\limits_{C: (\vec{C},\alpha_j)=-1} X^C(tT_j^{-1}\Psi_{s_jC\otimes K} + (t-1)\Psi_{C\otimes K})\,.
		\end{align*}
		\noindent 
		 Therefore, the coefficient of $X^{D}$ in $\sum\limits_{C} T_j X^C \Psi_{C\otimes K}$ is \begin{align*}
			g_{D,K} = \begin{cases}
				tT_j^{-1}\Psi_{s_jD\otimes K} + (t-1)\Psi_{D\otimes K}\,, & \hbox{ if } s_jD \leq D\,,
				\\
				T_j \Psi_{s_jD\otimes K}\,, & \hbox{ if } s_jD \geq D\,.
			\end{cases}
		\end{align*}

\end{proof}	

\autoref{TXPsi} shows that multiplying by $tT_j^{-1}$ to $\sum\limits_{C \in B(\varpi_{\ell})}X^C \Psi_{C \otimes K} $ produces a `$+$' sign in every case, whereas, multiplying by $T_j$ produces a `$-$' sign when $s_jD<D$. 

%
%
%
%
%
%

\subsection{Two Columns}

\begin{proposition}\label{Psirecess2col}
	\item[(1)] Let $\mu \in (\ZZ_{\geq 0}^n)_+$. Let $n \geq b \geq a \geq \ell(\mu)$. Let $T^0_\mu$ be the highest weight tableau of shape $\mu$. Let $E \in B(\varpi_a), F \in B(\varpi_b)$ such that $E\otimes T^0_\mu$ is semistandard.   
	
	If $j\notin E,j+1 \in E$  then \begin{align}
		\Psi_{F\otimes E\otimes T^0_\mu} = \begin{cases}
			tT_j^{-1} \Psi_{s_jF\otimes s_jE\otimes T^0_\mu}\,, &\hbox{ if } s_jF \leq F\,,
			\\
			T_j \Psi_{s_jF\otimes s_jE\otimes T^0_\mu} + (1-t) \Psi_{F\otimes s_jE\otimes T^0_\mu}\,, & \hbox{ if } s_jF\geq F\,.
		\end{cases}
	\end{align}
	
	\item[(2)] Let $n \geq b \geq a \geq 1$. Let $E \in B(\varpi_a), F \in B(\varpi_b)$. If $s_jE \leq E$ then \begin{align}\label{eq:psirecmod2cols}
		\Psi_{F\otimes E} = \begin{cases}
			tT_j^{-1} \Psi_{s_jF\otimes s_jE}\,, &\hbox{ if } s_jF \leq F\,,
			\\
			T_j \Psi_{s_jF\otimes s_jE} + (1-t) \Psi_{F\otimes s_jE}\,, & \hbox{ if } s_jF\geq F\,.
		\end{cases}
	\end{align}
\end{proposition}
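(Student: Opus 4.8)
The plan is to derive both statements from a single mechanism: take the one-column recursion of \autoref{Psirecess1col}, multiply it on the right by $X^{\varpi_b}$, expand the left-hand side by \eqref{PsiXomega} and the right-hand side by the Fundamental Lemma \autoref{TXPsi}, and then read off the coefficient of $X^F$. The coefficient extraction is legitimate because $H$ is free over $\ZZ[t^{\pm 1}]$ in the basis $\{X^\alpha T_w\}$, so an element written as $\sum_{F \in B(\varpi_b)} X^F h_F$ with $h_F \in H_n$ has its $h_F$ uniquely determined.

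For part (1), since $j \notin E$ and $j+1 \in E$ and $E \otimes T^0_\mu$ is semistandard, \autoref{Psirecess1col}(1) gives $\Psi_{E\otimes T^0_\mu} = tT_j^{-1}\Psi_{s_jE\otimes T^0_\mu}$ in $H_n$. I would multiply both sides on the right by $X^{\varpi_b}$; this is permissible because $b \geq a \geq \ell(\mu)$, so $\ell = b$ dominates every column length occurring and $s_j$ preserves column lengths. On the left, \eqref{PsiXomega} gives $\Psi_{E\otimes T^0_\mu}X^{\varpi_b} = \sum_{F \in B(\varpi_b)} X^F \Psi_{F\otimes E\otimes T^0_\mu}$. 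On the right, setting $K = s_jE\otimes T^0_\mu$ and applying \eqref{PsiXomega} again gives $\Psi_{s_jE\otimes T^0_\mu}X^{\varpi_b} = \sum_{C \in B(\varpi_b)} X^C \Psi_{C\otimes K}$, to which I apply \autoref{TXPsi} with $\ell = b$. Matching the coefficient of $X^F$ yields $\Psi_{F\otimes E\otimes T^0_\mu} = f_{F,K}$, which, after rewriting $K = s_jE\otimes T^0_\mu$, is exactly the two-case formula claimed.

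Part (2) is the same computation run with the empty tail and starting instead from \autoref{Psirecess1col}(2): whenever $s_jE \leq E$ one has $\Psi_E = tT_j^{-1}\Psi_{s_jE}$. Multiplying by $X^{\varpi_b}$, expanding the left via \eqref{PsiXomega}, expanding $tT_j^{-1}\Psi_{s_jE}X^{\varpi_b} = tT_j^{-1}\sum_{C} X^C \Psi_{C\otimes s_jE}$ via \autoref{TXPsi} with $K = s_jE$, and matching the coefficient of $X^F$ gives $\Psi_{F\otimes E} = f_{F,s_jE}$, the claimed formula. This single computation covers both $s_jE < E$ (reproducing part (1) with $\mu = 0$) and the degenerate $s_jE = E$; in the latter the one-column identity reads $\Psi_E = tT_j^{-1}\Psi_E$ and the output formula still holds verbatim with $s_jE$ replaced by $E$, so no separate case split is needed.

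I do not expect a genuine obstacle, since the argument is essentially bookkeeping once \autoref{TXPsi} is available; the one conceptual input — that right multiplication by $X^{\varpi_b}$ converts the $H_n$-coefficient $tT_j^{-1}$ into the sign pattern $f_{D,K}$ — is precisely what \autoref{TXPsi} supplies. The points that require care are (i) verifying the length hypotheses $b \geq a \geq \ell(\mu)$ so that \eqref{PsiXomega} and \autoref{TXPsi} genuinely apply and every $s_j$-action stays within $B(\varpi_b)$ and $B(\varpi_a)$, and (ii) justifying the term-by-term matching of the two expansions $\sum_F X^F(\cdots)$ via the freeness of $H$ over the basis $\{X^\alpha T_w\}$.
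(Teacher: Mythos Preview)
Your proposal is correct and follows essentially the same approach as the paper's proof: start from the one-column identity of \autoref{Psirecess1col}, multiply on the right by $X^{\varpi_b}$, expand both sides via \eqref{PsiXomega}, apply \autoref{TXPsi} with $K = s_jE\otimes T^0_\mu$ (respectively $K = s_jE$), and read off the coefficient of $X^F$ to obtain $\Psi_{F\otimes E\otimes T^0_\mu} = f_{F,K}$. Your added remarks on the legitimacy of coefficient extraction and the consistency at $s_jF = F$ are sound and mirror observations the paper makes or leaves implicit.
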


\begin{proof}
	Note that since $tT_j^{-1} = T_j + 1-t$, the cases are consistent when $s_jF = F$.
	
	\item[(1)] 
	
	Let $j \notin E, j+1 \in E$. Then by \autoref{Psirecess1col}[(1)] $$\Psi_{E\otimes T^0_\mu} = tT_j^{-1}\Psi_{s_jE\otimes T^0_\mu}\,.$$ Then using \eqref{PsiXomega}, \begin{align*}
		\sum_{F \in B(\varpi_{b})} X^F \Psi_{F\otimes E\otimes T^0_\mu} = \Psi_{E\otimes T^0_\mu} X^{\varpi_{b}} &= tT_j^{-1} \Psi_{s_jE\otimes T^0_\mu} X^{\varpi_{b}} =  tT_j^{-1} \sum_{F\in B(\varpi_{b})} X^F \Psi_{F\otimes s_jE\otimes T^0_\mu}\,.
	\end{align*}
	
	\noindent In the notation of \autoref{TXPsi}, taking $K = s_jE\otimes T^0_\mu$, we get $f_{F,K} = \Psi_{F\otimes E\otimes T^0_\mu}$. Then by \autoref{TXPsi}, if $s_jF \leq F$ then \begin{align*}
		\Psi_{F\otimes E\otimes T^0_\mu} = f_{F,K} = tT_j^{-1}\Psi_{s_jF\otimes K} = tT_j^{-1}\Psi_{s_jF\otimes s_jE\otimes T^0_\mu}\,,
	\end{align*} 
	and if $s_jF \geq F$ then \begin{align*}
		\Psi_{F\otimes E\otimes T^0_\mu} = f_{F,K} = T_j\Psi_{s_jF\otimes K} + (1-t)\Psi_{F\otimes K} = T_j\Psi_{s_jF\otimes s_jE\otimes T^0_\mu} + (1-t)\Psi_{F\otimes s_jE\otimes T^0_\mu}\,.
	\end{align*}
	
	
	\item[(2)] Let $s_jE \leq E$. Then by \autoref{Psirecess1col}[(2)] $$\Psi_{E} = tT_j^{-1}\Psi_{s_jE}\,.$$ Then the proof proceeds exactly as (1) with $\mu = 0$. 
	
%
\end{proof}

\subsection{proof of \autoref{Psirecrcols}}

%
%

	Part (a) and (b) are the statements of \autoref{10Xlambda}[(a)] and \autoref{Psiinitialcondition}.

	The proof of part (c) is by induction on the number of columns $r$.
	
\noindent	The base cases $r=1$ and $r=2$ are \autoref{Psirecess1col}[(1)] and \autoref{Psirecess2col}[(1)]. 
	
\noindent	Let $r \geq 2$. Assume that \eqref{eq:mainrec} holds for $r$. We will prove \eqref{eq:mainrec} for $r+1$ columns.
	
\noindent	Let $1 \leq \ell_1 \leq \ldots \leq \ell_r \leq \ell_{r+1} \leq n$. Let $T = C_{r+1} \otimes C_r \otimes \ldots \otimes C_1$ with $C_i \in B(\varpi_{\ell_i})$ for $i \in [r+1]$. Let $S = C_r \otimes \ldots \otimes C_1$. We will prove \eqref{eq:mainrec} for $T$ assuming the statement for $S$. 
	
\noindent	Let $\ell = \ell_{r+1}$, and $\sigma^j_k = \sigma^j_{C_k}$ for $k \in [r+1]$.
	
\noindent	By \eqref{PsiXomega}, $$ \Psi_{T \otimes T^0_\mu} = \Psi_{C_{r+1}\otimes S\otimes T^0_\mu} = \hbox{ coefficient of } X^{C_{r+1}} \hbox{ in } \Psi_{S\otimes T^0_\mu}X^{\varpi_{\ell}}\,.$$
	
	
	\textbf{Case 1: $\sigma^j_r = 1$.}

\noindent	By the induction assumption, $$ \Psi_{S\otimes T^0_\mu} = tT_j^{-1}\Psi_{\Omega^j_r(S)\otimes T^0_\mu} + \sum\limits_{\substack{r > k \geq 1 \\ \sigma^j_k\sigma^j_{k+1}=-1}} \sigma^j_k (1-t) \Psi_{\Omega^j_k(S)\otimes T^0_\mu} \,.$$ 
	\noindent
	Then, using \eqref{PsiXomega}, \begin{align}\label{eq:temp1}
		\Psi_{S\otimes T^0_\mu} X^{\varpi_{\ell}} = \sum_{C \in B(\varpi_{\ell})} tT_j^{-1}X^C \Psi_{C\otimes \Omega^j_r(S)\otimes T^0_\mu} + \sum_{C \in B(\varpi_{\ell})}\sum\limits_{\substack{r > k \geq 1 \\ \sigma^j_k\sigma^j_{k+1}=-1}} \sigma^j_k (1-t) X^C \Psi_{C\otimes \Omega^j_k(S)\otimes T^0_\mu}\,.
	\end{align}
	\noindent
	The first sum of \eqref{eq:temp1} is  \begin{align*}
		\sum_{C \in B(\varpi_{\ell})} tT_j^{-1}X^C \Psi_{C\otimes \Omega^j_r(S)\otimes T^0_\mu} = \sum_{D \in B(\varpi_{\ell})} X^D f_{D,\Omega^j_r(S)\otimes T^0_\mu}\,,
	\end{align*} where $f_{D,\Omega^j_r(S)\otimes T^0_\mu}$ is given by \autoref{TXPsi}.
	\noindent
	Then, using \eqref{PsiXomega}, \begin{align*}
		\Psi_{C_{r+1}\otimes S\otimes T^0_\mu} &= \hbox{ coefficient of } X^{C_{r+1}} \hbox{ in } \Psi_{S\otimes T^0_\mu}X^{\varpi_{\ell}} 
		\\
		&= f_{C_{r+1},\Omega^j_r(S)\otimes T^0_\mu} + \sum_{\substack{r > k \geq 1 \\ \sigma^j_k\sigma^j_{k+1}=-1}} \sigma^j_k (1-t) \Psi_{C_{r+1}\otimes \Omega^j_k(S)\otimes T^0_\mu} 
	\end{align*}
	
	If $\sigma^j_{r+1} = 1$, then by \autoref{TXPsi}, \begin{align*}
		\Psi_{C_{r+1}\otimes S\otimes T^0_\mu} = tT_j^{-1}\Psi_{s_jC_{r+1}\otimes \Omega^j_r(S)\otimes T^0_\mu} + \sum\limits_{\substack{r > k \geq 1 \\ \sigma^j_k\sigma^j_{k+1}=-1}} \sigma^j_k (1-t) \Psi_{C_{r+1}\otimes \Omega^j_k(S)\otimes T^0_\mu}\,.
	\end{align*}
	\noindent
	Since $\sigma^j_r = 1$, when $\sigma^j_{r+1} = 1$, there are no extra sign changes, hence the above equation agrees with \eqref{eq:mainrec}. 
	
	If $\sigma^j_{r+1} = -1$, then by \autoref{TXPsi}, \begin{align*}
		&\Psi_{C_{r+1}\otimes S\otimes T^0_\mu} 
		\\
		&= T_j \Psi_{s_jC_{r+1}\otimes \Omega^j_r(S)\otimes T^0_\mu} + (1-t)\Psi_{C_{r+1}\otimes \Omega^j_r(S)\otimes T^0_\mu} + \sum\limits_{\substack{r > k \geq 1 \\ \sigma^j_k\sigma^j_{k+1}=-1}} \sigma^j_k (1-t) \Psi_{C_{r+1}\otimes \Omega^j_k(S)\otimes T^0_\mu}\,.
	\end{align*}
	
	\noindent
	Since $\sigma^j_r = 1$, when $\sigma^j_{r+1} = -1$ then there is an extra sign change in $r$-th place. Hence, the above equation agrees with \eqref{eq:mainrec}. 
	
	
	\textbf{Case 2: $\sigma^j_{r} = -1$}.
	
\noindent	By the induction assumption, \begin{align*}
		\Psi_{S\otimes T^0_\mu} = T_j\Psi_{\Omega^j_r(S)\otimes T^0_\mu} + \sum\limits_{\substack{r > k \geq 1 \\ \sigma^j_k\sigma^j_{k+1}=-1}} \sigma^j_k (1-t) \Psi_{\Omega^j_k(S)\otimes T^0_\mu}
	\end{align*}
	\noindent
	Then, using \eqref{PsiXomega},  \begin{equation}\label{eq:temp2}
		\Psi_{S\otimes T^0_\mu} X^{\varpi_{\ell}} = 	\sum_{C \in B(\varpi_{\ell})}T_jX^C\Psi_{C\otimes \Omega^j_r(S)\otimes T^0_\mu} + \sum_{C \in B(\varpi_{\ell})} \sum\limits_{\substack{r > k \geq 1 \\ \sigma^j_k\sigma^j_{k+1}=-1}} \sigma^j_k (1-t) X^C \Psi_{C\otimes \Omega^j_k(S)\otimes T^0_\mu}\,. 
	\end{equation}
	\noindent
	The first sum of \eqref{eq:temp2} is \begin{align*}
		\sum_{C \in B(\varpi_{\ell})}T_jX^C\Psi_{C\otimes \Omega^j_r(S)\otimes T^0_\mu} = \sum_{D \in B(\varpi_{\ell})} X^D g_{D,\Omega^j_r(S)\otimes T^0_\mu}\,,
	\end{align*} where $g_{D,\Omega^j_r(S)\otimes T^0_\mu}$ is given by \autoref{TXPsi}.
	\noindent
	Then, using \eqref{PsiXomega}, \begin{align*}
		\Psi_{C_{r+1}\otimes S\otimes T^0_\mu} &= \hbox{ coefficient of } X^{C_{r+1}} \hbox{ in } \Psi_{S\otimes T^0_\mu} X^{\varpi_{\ell}}
		\\
		&= g_{C_{r+1},\Omega^j_r(S)\otimes T^0_\mu} + \sum_{\substack{r > k \geq 1 \\ \sigma^j_k\sigma^j_{k+1}=-1}} \sigma^j_k (1-t) \Psi_{C_{r+1}\otimes \Omega^j_k(S)\otimes T^0_\mu} 
\end{align*}	
	
	If $\sigma^j_{r+1} = 1$, then by \autoref{TXPsi}, 
	\begin{align*}
		&\Psi_{C_{r+1}\otimes S\otimes T^0_\mu} 
		\\
		&= tT_j^{-1}\Psi_{s_jC_{r+1}\otimes \Omega^j_r(S)\otimes T^0_\mu} -  (1-t)\Psi_{C_{r+1}\otimes \Omega^j_r(S)\otimes T^0_\mu} + \sum\limits_{\substack{r > k \geq 1 \\ \sigma^j_k\sigma^j_{k+1}=-1}} \sigma^j_k (1-t) \Psi_{C_{r+1}\otimes \Omega^j_k(S)\otimes T^0_\mu}\,.
	\end{align*}
	Since $\sigma^j_r = -1$, when $\sigma^j_{r+1} = 1$ then there is an extra sign change in $r$-th place. Hence, the above equation agrees with \eqref{eq:mainrec}. 
	
	If $\sigma^j_{r+1} = -1$, then by \autoref{TXPsi},
	\begin{align*}
		\Psi_{C_{r+1}\otimes S\otimes T^0_\mu} = T_j \Psi_{s_jC_{r+1}\otimes \Omega^j_r(S)\otimes T^0_\mu} + \sum\limits_{\substack{r > k \geq 1 \\ \sigma^j_k\sigma^j_{k+1}=-1}} \sigma^j_k (1-t) \Psi_{C_{r+1}\otimes \Omega^j_k(S)\otimes T^0_\mu}\,.
	\end{align*}
Since $\sigma^j_r = -1$, when $\sigma^j_{r+1} = -1$, there are no extra sign changes, hence, the above equation agrees with \eqref{eq:mainrec}.

\qed
\ytableausetup{smalltableaux}
\begin{example}
	Let $n = 3$ and $\lambda = (2,1,0)$. Then $\one_{(2,1,0)} = 1$. \begin{align*}
		&\Psi_{\ytableaushort{11,2}} = \one_{(2,1,0)} = 1\,,
		\\
		&\Psi_{\ytableaushort{11,3}} = tT_2^{-1}\Psi_{\ytableaushort{11,2}} = tT_2^{-1} \one_{(2,1,0)} = tT_2^{-1}\,,
		\\
		&\Psi_{\ytableaushort{12,2}} = tT_1^{-1}\Psi_{\ytableaushort{11,2}} = tT_1^{-1}\one_{(2,1,0)} = tT_1^{-1}\,,
		\\
		&\Psi_{\ytableaushort{12,3}} = T_1 \Psi_{\ytableaushort{21,3}} + (1-t)\Psi_{\ytableaushort{11,3}} = T_1 \cdot 0 + (1-t)\cdot tT_2^{-1} = (1-t)tT_2^{-1}\,,
		\\
		&\Psi_{\ytableaushort{22,3}} = tT_1^{-1} \Psi_{\ytableaushort{11,3}} = tT_1^{-1}tT_2^{-1},
		\\
		&\Psi_{\ytableaushort{13,2}} = T_2\Psi_{\ytableaushort{12,3}} + (1-t)\Psi_{\ytableaushort{12,2}} = T_2 \cdot (1-t)tT_2^{-1} + (1-t)\cdot tT_1^{-1} \\&\qquad\qquad= (1-t)(t+tT_1^{-1}) = (1-t)(1+T_1)\,,
		\\
		&\Psi_{\ytableaushort{13,3}} = tT_2^{-1}\Psi_{\ytableaushort{12,2}} = tT_2^{-1}tT_1^{-1},
		\\
		&\Psi_{\ytableaushort{23,3}} = tT_2^{-1}\Psi_{\ytableaushort{22,3}} = tT_2^{-1}tT_1^{-1}tT_2^{-1} 
	\end{align*} 
\end{example}

\section{Relation to Macdonald's formula}

In this section we derive that the projection of $\Psi_T$ onto $\ZZ[t]$ satisfies the same recursions as that of Macdonald's $\psi_T$, thus proving their equality.

Let $\lambda = \varpi_{\ell_r} + \ldots + \varpi_{\ell_1}$ with $1 \leq \ell_1\leq \ldots \leq \ell_r \leq n$. Let $T \in B(\varpi_{\ell_r}) \otimes \ldots \otimes B(\varpi_{\ell_1})$. Since $\Psi_T \in H_n$, and $T_w \one_0 = t^{\ell(w)}\one_0$ for $w \in S_n$, then there exists $\widetilde{\psi}_T \in \ZZ(t^{\pm 1})$ such that \begin{equation}
	\widetilde{\psi}_T\one_0 = \dfrac{1}{W_\lambda(t)} \Psi_T \one_0\,.
\end{equation}
Apriori, $\widetilde{\psi}_T$ is a rational function in $t^{\pm1}$. A consequence of \autoref{tildepsimainthm} is that $\widetilde{\psi}_T \in \ZZ[t]$.

Let $\lambda \in (\ZZ_{\geq 0}^n)_+$.  Then \eqref{Plambdadef} and \eqref{eq:10Xlambda} gives \begin{equation}
	 P_\lambda(t) = \sum_{T \in B(\lambda)} \widetilde{\psi}_{T}X^{T} \,.
\end{equation} 

Let $T = C_r \otimes \ldots \otimes C_1 \in B(\varpi_{\ell_r})\otimes \ldots \otimes B(\varpi_{\ell_1})$, with $C_i \in B(\varpi_{\ell_i})$ for $i \in [r]$. Suppose $T \otimes T^0_\mu$ is semistandard. 
Suppose that $j\notin C_1,j+1 \in C_1$. Assume that the shape of $T\otimes T^0_\mu$ is $\lambda$. Multiplying \eqref{eq:mainrec} with $\dfrac{1}{W_\lambda(t)}\one_0$ on the right and using $tT_j^{-1}\one_0 = \one_0$ and $T_j\one_0 = t\one_0$ we get 		 
\begin{align}\label{eq:tildepsirec}
	\widetilde{\psi}_{T\otimes T^0_\mu} = \begin{cases}
		t\widetilde{\psi}_{\Omega^j_r(T)\otimes T^0_\mu} + \sum\limits_{\substack{r > k \geq 1 \\ \sigma^j_{C_k}\sigma^j_{C_{k+1}}=-1}} \sigma^j_{C_k} (1-t) \widetilde{\psi}_{\Omega^j_k(T)\otimes T^0_\mu}\,,& \hbox{ if } j \in C_r, j+1 \notin C_r \,,
		\\
		\widetilde{\psi}_{\Omega^j_r(T)\otimes T^0_\mu} + \sum\limits_{\substack{r > k \geq 1 \\ \sigma^j_{C_k}\sigma^j_{C_{k+1}}=-1}} \sigma^j_{C_k} (1-t) \widetilde{\psi}_{\Omega^j_k(T)\otimes T^0_\mu}\,,&  \hbox{otherwise}\,.
	\end{cases}
\end{align}

\begin{theorem}\label{tildepsimainthm}
	\begin{enumerate}
		\item Suppose $1 \leq \ell_1 \leq \ldots \leq \ell_r \leq n$. If $T = C_r \otimes \ldots \otimes C_1$ with $C_i \in B(\varpi_{\ell_i})$ then \begin{equation}\label{eq:tildepsiTproduct}
			\widetilde{\psi}_T = \prod_{j = 1}^{r-1} \widetilde{\psi}_{C_{j+1}\otimes C_{j}} \,.
		\end{equation}
		
		\item Let $1 \leq a \leq b \leq n$. For $F \in B(\varpi_b), E \in B(\varpi_a)$, the $\widetilde{\psi}_{F\otimes E}$ are determined by the following recursions. \begin{enumerate}
			\item[(a)] If $F\otimes E$ is not semistandard then $\widetilde{\psi}_{F\otimes E} = 0$,
			\item[(b)] If $E = (1,\ldots,a)$ and $F\otimes E$ is semistandard then $\widetilde{\psi}_{F\otimes E} = 1$.
			
			\item[(c)] Suppose $F\otimes E$ is semistandard. If $s_jE \leq E$ then \begin{align}\label{eq:tildepsirec2cols}
				\widetilde{\psi}_{F\otimes E} = \begin{cases}
					t \widetilde{\psi}_{s_jF\otimes s_jE} + (1-t) \widetilde{\psi}_{F\otimes s_jE}\,, & \hbox{ if } s_jF \geq F\,,
					\\
					\widetilde{\psi}_{s_jF\otimes s_jE}\,, &\hbox{ if } s_jF \leq F \,.
				\end{cases}
			\end{align}
		\end{enumerate}
		
	\end{enumerate}	
\end{theorem}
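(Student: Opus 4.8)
The plan is to treat the two-column recursions in part (2) as near-immediate consequences of the Hecke-algebra identities already proved, and then to bootstrap from them to the product formula in part (1), which is where the real work lies. Throughout I would exploit that $\Psi_T \one_0 \in \ZZ[t^{\pm1}]\one_0$, so $\widetilde{\psi}_T$ is a well-defined scalar, and that right-multiplication by $\one_0$ converts Hecke identities into scalar identities via $tT_j^{-1}\one_0 = \one_0$, $T_j\one_0 = t\one_0$, and $\one_\mu\one_0 = W_\mu(t)\one_0$ from \eqref{eq:1lambda10}.

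First I would dispose of part (2). Part (2a) is immediate: if $F\otimes E$ is not semistandard then $\Psi_{F\otimes E}=0$ by \autoref{10Xlambda}(a), hence $\widetilde{\psi}_{F\otimes E}\one_0 = \tfrac{1}{W_\lambda(t)}\Psi_{F\otimes E}\one_0 = 0$. For (2b), take $\mu = \varpi_a$ in \autoref{Psiinitialcondition} to get $\Psi_{F\otimes E} = t^{\ell(u_F)}(T_{u_F^{-1}})^{-1}\one_{\varpi_b+\varpi_a}$; multiplying on the right by $\tfrac{1}{W_\lambda(t)}\one_0$ and using $\one_{\varpi_b+\varpi_a}\one_0 = W_\lambda(t)\one_0$ together with $t^{\ell(u_F)}(T_{u_F^{-1}})^{-1}\one_0 = \one_0$ (a consequence of \eqref{Tw10}) collapses the right-hand side to $\one_0$, so $\widetilde{\psi}_{F\otimes E}=1$. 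Part (2c) follows the same way from the two-column $\Psi$-recursion \autoref{Psirecess2col}(2): multiplying \eqref{eq:psirecmod2cols} on the right by $\tfrac{1}{W_\lambda(t)}\one_0$ and applying $tT_j^{-1}\one_0=\one_0$, $T_j\one_0 = t\one_0$ turns its two cases into exactly \eqref{eq:tildepsirec2cols}. Note that $s_j$ preserves column lengths, so $F\otimes E$, $s_jF\otimes s_jE$, $F\otimes s_jE$ all have the common shape $\varpi_b+\varpi_a$ and a single factor $W_\lambda(t)$ cancels throughout.

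For part (1), I would set $P_T := \prod_{j=1}^{r-1}\widetilde{\psi}_{C_{j+1}\otimes C_j}$ and show that $P_T$ and $\widetilde{\psi}_T$ obey the same multi-column recursion \eqref{eq:tildepsirec} and the same boundary values, so that the deterministic reduction toward highest weight tableaux forces $P_T = \widetilde{\psi}_T$. The boundary values agree: if $T$ is not semistandard then some adjacent pair $C_{k+1}\otimes C_k$ is not semistandard, so both $\widetilde{\psi}_T$ (by \autoref{10Xlambda}(a)) and $P_T$ (by (2a)) vanish; if $T$ is highest weight then $\widetilde{\psi}_T = 1$ by \autoref{Psiinitialcondition} and $P_T = 1$ by (2b). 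The recursion \eqref{eq:tildepsirec} holds for $\widetilde{\psi}_T$ because it is the image of \eqref{eq:mainrec} under right-multiplication by $\tfrac{1}{W_\lambda(t)}\one_0$, established in \autoref{Psirecrcols}. The substance of the argument is to verify that $P_T$ satisfies \eqref{eq:tildepsirec}, which I would do by a telescoping expansion: fixing $j$ with $j\notin C_1,\,j+1\in C_1$, I expand the rightmost factor of $P_T$ by (2c), then repeatedly re-expand the factors that still contain an unlowered copy of a shared column, applying (2c) both to lowerable pairs and to the pairs formed by the $s_j$-shifted columns. The mechanism is the family of rearrangements of \eqref{eq:tildepsirec2cols}, e.g. $t\,\widetilde{\psi}_{F\otimes E} = \widetilde{\psi}_{s_jF\otimes s_jE} - (1-t)\widetilde{\psi}_{F\otimes s_jE}$, which slides the boundary between lowered and unlowered columns one step at a time; each crossing of a sign change $\sigma^j_{C_k}\sigma^j_{C_{k+1}} = -1$ deposits exactly one surviving term $\sigma^j_{C_k}(1-t)P_{\Omega^j_k(T)}$, while runs of constant sign telescope without residue.

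The main obstacle is precisely this telescoping: one must organize the repeated application of the two-column recursion across every sign pattern of $(\sigma^j_{C_1},\ldots,\sigma^j_{C_r})$ and check that the surviving coefficients and signs reproduce \eqref{eq:tildepsirec} term by term. I expect it to be a careful but mechanical induction on $r$ — sliding the lowering boundary from position $1$ up to position $r$ — in which the two cases $\sigma^j_{C_r}=\pm1$ select the $tT_j^{-1}$ versus $T_j$ head of the recursion exactly as in the proof of \autoref{Psirecrcols}, and each intermediate step produces at most one new constant-in-$t$ residue governed by a sign change. Once \eqref{eq:tildepsirec} is confirmed for $P_T$, equality with $\widetilde{\psi}_T$ is automatic, since the recursion strictly lowers the rightmost non-highest-weight column and hence terminates at the boundary values already matched.
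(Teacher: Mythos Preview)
Your proposal is correct and essentially coincides with the paper's proof: part (2) is handled identically, and for part (1) the paper also reduces everything to the two-column recursion \eqref{eq:tildepsirec2cols} together with the multi-column recursion \eqref{eq:tildepsirec}, arguing by a double induction (on the number $r$ of non-highest-weight columns and, within that, on $C_1$ in the column order). The only cosmetic difference is that the paper phrases the inductive step as factoring $\widetilde{\psi}_T = \widetilde{\psi}_{C_m\otimes C_{m-1}}\cdot\widetilde{\psi}_{S'\otimes T^0_\mu}$ rather than verifying that the product $P_T$ satisfies \eqref{eq:tildepsirec}; either way the step boils down to a four-case check on $(\sigma^j_{C_m},\sigma^j_{C_{m-1}})$, so your ``sliding boundary'' telescoping need only look at the two outermost signs at each stage, not the full sign pattern.
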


\begin{proof}
	
	\item[(2.a)] \autoref{10Xlambda} gives that if $T$ is not semistandard then $\widetilde{\psi}_T = 0$. 
	
	\item[(2.b)] Follows from \autoref{Psiinitialcondition}, by using \eqref{eq:1lambda10}.
	
	\item[(2.c)] Let $\lambda = \varpi_b + \varpi_a$. Multiplying \eqref{eq:psirecmod2cols} with $\dfrac{1}{W_\lambda(t)}\one_0$ gives \eqref{eq:tildepsirec2cols}.

	\item[(1)] 
	 If $T$ is not semistandard then one of $C_{j+1} \otimes C_j$ is not semistandard. Hence both sides of \eqref{eq:tildepsiTproduct} is $0$.
		
		From \autoref{Psiinitialcondition}, using \eqref{eq:1lambda10}, we get \begin{equation}
			\widetilde{\psi}_{T^0_\lambda} = 1\,, \qquad \hbox{ for }  \lambda \in (\ZZ_{\geq 0}^n)_+ \,.
		\end{equation}
		
		Without loss of generality we can assume that $T$ is semistandard and $T = S \otimes T^0_\mu$ where $S = C_r \otimes \ldots \otimes C_1$ with $C_i \in B(\varpi_{\ell_i})$. We want to prove that \begin{equation}\label{eq:tildepsimultiplies}
			\widetilde{\psi}_{T} = \prod_{i = 1}^{r-1} \widetilde{\psi}_{C_{i+1}\otimes C_{i}} \,.
		\end{equation}   
		
		If $r=0$ or $r=1$ then \eqref{eq:tildepsimultiplies} is true since both sides equal 1. 
		
		Assume $r \geq 2$. Let $S' = C_{r-1} \otimes \ldots \otimes C_1$. Then to prove \eqref{eq:tildepsimultiplies} it is enough to show that \begin{equation*}\label{eq:tildepsitemp}
			\widetilde{\psi}_T = \widetilde{\psi}_{C_r \otimes C_{r-1}} \cdot \widetilde{\psi}_{S'\otimes T^0_\mu}.
		\end{equation*}
		
		Assume that \eqref{eq:tildepsimultiplies} is true whenever $1\leq r<m$. We will prove \eqref{eq:tildepsimultiplies} for $r = m$. 
		
		\textbf{Case I:} Suppose $C_1$ is highest weight. Then  $\widetilde{\psi}_{C_2\otimes C_1} = 1$, hence \eqref{eq:tildepsimultiplies} is true in this case by induction assumption. 
		
		\textbf{Case II:} Suppose $C_1$ is not highest weight. Suppose that \eqref{eq:tildepsimultiplies} for $r=m$ is true when $C_1$ is replaced by a column less than $C_1$ in the $\leq$ order on columns. Since $C_1$ is not highest weight, there exists $j\in [n-1]$ such that $j \notin C_1, j+1 \in C_1$. By induction assumption, \begingroup
		\begin{equation*}
			\widetilde{\psi}_{L_m\otimes\ldots\otimes L_2\otimes s_jC_1\otimes T^0_\mu} = \widetilde{\psi}_{L_m\otimes L_{m-1}}\ldots \widetilde{\psi}_{L_2\otimes s_jC_1} \,,
		\end{equation*}
		\endgroup for all columns $L_i \in B(\varpi_{\ell_i})$ for $i \in [2,m]$.
		
		Let $\sigma^j_i = \sigma^j_{C_i}$ for $i \in [m]$.  
				
		\underline{\textbf{Case 1:} $\sigma^j_{m} =1, \sigma^j_{m-1} = 1$}. 

		By \eqref{eq:tildepsirec2cols}, $$\widetilde{\psi}_{C_m\otimes C_{m-1}} = \widetilde{\psi}_{s_jC_m\otimes s_jC_{m-1}}\,.$$ 
		\noindent
		Then by \eqref{eq:tildepsirec} and the induction assumption \begin{align*}
			\widetilde{\psi}_{T} &= \widetilde{\psi}_{\Omega^j_m(S)\otimes T^0_\mu} + \sum_{\substack{1 \leq k<m-1\\ \sigma^j_k \sigma^j_{k+1}=-1}} \sigma^j_k (1-t)\widetilde{\psi}_{\Omega^j_k(S)\otimes T^0_\mu}
			\\
			&= \widetilde{\psi}_{s_jC_m\otimes s_jC_{m-1}}\widetilde{\psi}_{\Omega^j_{m-1}(S')\otimes T^0_\mu} + \widetilde{\psi}_{C_m\otimes C_{m-1}}\sum_{\substack{1 \leq k<m-1\\ \sigma^j_k \sigma^j_{k+1}=-1}}\sigma^j_k (1-t)\widetilde{\psi}_{\Omega^j_k(S')\otimes T^0_\mu}
			\\
			&= \widetilde{\psi}_{C_m\otimes C_{m-1}}\widetilde{\psi}_{\Omega^j_{m-1}(S')\otimes T^0_\mu} + \widetilde{\psi}_{C_m\otimes C_{m-1}}\sum_{\substack{1 \leq k<m-1\\ \sigma^j_k \sigma^j_{k+1}=-1}}\sigma^j_k (1-t)\widetilde{\psi}_{\Omega^j_k(S')\otimes T^0_\mu}
			\\
			&= \widetilde{\psi}_{C_m\otimes C_{m-1}}\Big(\widetilde{\psi}_{\Omega^j_{m-1}(S')\otimes T^0_\mu} + \sum_{\substack{1 \leq k<m-1\\ \sigma^j_k \sigma^j_{k+1}=-1}}\sigma^j_k (1-t)\widetilde{\psi}_{\Omega^j_{k}(S')\otimes T^0_\mu}\Big)
			\\
			&= \widetilde{\psi}_{C_m\otimes C_{m-1}} \widetilde{\psi}_{S' \otimes T^0_\mu}\,.
		\end{align*}
		
		\underline{\textbf{Case 2:} $\sigma^j_m = 1, \sigma^j_{m-1} = -1$}. 

		Since $s_j(s_jC_{m-1}) = C_{m-1} < s_jC_{m-1}$, and $s_j(s_jC_m) = C_m \geq s_jC_m$, by \eqref{eq:tildepsirec2cols}, $$\widetilde{\psi}_{s_jC_m\otimes s_jC_{m-1}} = t\widetilde{\psi}_{C_m\otimes C_{m-1}} + (1-t)\widetilde{\psi}_{s_jC_m\otimes C_{m-1}}\,.$$ 
		\noindent
		Then by \eqref{eq:tildepsirec} and the induction assumption,
		\begin{align*}
			\widetilde{\psi}_{T} 
			&= \widetilde{\psi}_{\Omega^j_m(S)\otimes T^0_\mu} - (1-t) \widetilde{\psi}_{\Omega^j_{m-1}(S)\otimes T^0_\mu} + \sum_{\substack{1 \leq k<m-1 \\ \sigma^j_k \sigma^j_{k+1}=-1}} \sigma^j_k (1-t) \widetilde{\psi}_{\Omega^j_k(S)\otimes T^0_\mu}
			\\
			&= \Big(\widetilde{\psi}_{s_jC_m\otimes s_jC_{m-1}}-(1-t)\widetilde{\psi}_{C_m\otimes s_jC_{m-1}}\Big)\widetilde{\psi}_{\Omega^j_{m-1}(S')\otimes T^0_\mu} 
			\\
			&\qquad+ \widetilde{\psi}_{C_m\otimes C_{m-1}} \sum_{\substack{1 \leq k<m-1 \\ \sigma^j_k \sigma^j_{k+1}=-1}} \sigma^j_k (1-t) \widetilde{\psi}_{\Omega^j_k(S')\otimes T^0_\mu}
			\\
			&= \widetilde{\psi}_{C_m\otimes C_{m-1}} \Big(t\widetilde{\psi}_{\Omega^j_{m-1}(S')\otimes T^0_\mu} + \sum_{\substack{1 \leq k<m-1 \\ \sigma^j_k \sigma^j_{k+1}=-1}} \sigma^j_k (1-t) \widetilde{\psi}_{\Omega^j_k(S')\otimes T^0_\mu}\Big)
			\\
			&= \widetilde{\psi}_{C_m\otimes C_{m-1}} \widetilde{\psi}_{S' \otimes T^0_\mu} \,.
		\end{align*}
		
		\underline{\textbf{Case 3:} $\sigma^j_m = -1, \sigma^j_{m-1} = 1$}. 
		
		By \eqref{eq:tildepsirec2cols}, $$\widetilde{\psi}_{C_{m}\otimes C_{m-1}} = t \widetilde{\psi}_{s_jC_m\otimes s_jC_{m-1}}+ (1-t)\widetilde{\psi}_{C_m\otimes s_jC_{m-1}}\,.$$
		\noindent
		Then by \eqref{eq:tildepsirec} and the induction assumption \interdisplaylinepenalty=10000 \begin{align*}
			\widetilde{\psi}_{T} 
			&= t \widetilde{\psi}_{\Omega^j_m(S)\otimes T^0_\mu} + (1-t) \widetilde{\psi}_{\Omega^j_{m-1}(S)\otimes T^0_\mu} +  \sum_{\substack{1 \leq k<m-1\\ \sigma^j_k\sigma^j_{k+1}=-1}}  \sigma^j_k (1-t) \widetilde{\psi}_{\Omega^j_k(S)\otimes T^0_\mu} 
			\\
			&= \Big(t \widetilde{\psi}_{s_jC_m\otimes s_jC_{m-1}}+(1-t)\widetilde{\psi}_{C_m\otimes s_jC_{m-1}}\Big)\widetilde{\psi}_{\Omega^j_{m-1}(S')\otimes T^0_\mu} 
			\\
			&\qquad + \widetilde{\psi}_{C_m\otimes C_{m-1}} \sum_{\substack{1 \leq k<m-1\\ \sigma^j_k\sigma^j_{k+1}=-1}}  \sigma^j_k (1-t) \widetilde{\psi}_{ \Omega^j_k(S')\otimes T^0_\mu}
			\\
			&= \widetilde{\psi}_{C_m\otimes C_{m-1}}\Big(\widetilde{\psi}_{\Omega^j_{m-1}(S')\otimes T^0_\mu} + \sum_{\substack{1 \leq k<m-1\\ \sigma^j_k\sigma^j_{k+1}=-1}}  \sigma^j_k (1-t) \widetilde{\psi}_{\Omega^j_k(S')\otimes T^0_\mu}
			\Big)
			\\
			&= \widetilde{\psi}_{C_m\otimes C_{m-1}} \widetilde{\psi}_{S'\otimes T^0_\mu}\,.
		\end{align*}
		\interdisplaylinepenalty=1000
		
		\underline{\textbf{Case 4:} $\sigma^j_m = -1, \sigma^j_{m-1} = -1$.} 
		
		Since $s_j(s_jC_{m-1}) = C_{m-1} < s_jC_{m-1}$, and $s_j(s_jC_m) = C_m < s_jC_m$, by \eqref{eq:tildepsirec2cols}, $$\widetilde{\psi}_{s_jC_m \otimes s_jC_{m-1}} = \widetilde{\psi}_{C_m\otimes C_{m-1}}\,.$$ 
		\noindent
		Then by \eqref{eq:tildepsirec} and the induction assumption
		\begin{align*}
			\widetilde{\psi}_{T} &=t \widetilde{\psi}_{\Omega^j_m(S)\otimes T^0_\mu} + \sum_{\substack{1 \leq k<m-1\\ \sigma^j_k\sigma^j_{k+1}=-1}} \sigma^j_k (1-t) \widetilde{\psi}_{ \Omega^j_k(S)\otimes T^0_\mu}
			\\
			&= t \widetilde{\psi}_{s_jC_m\otimes s_jC_{m-1}} \widetilde{\psi}_{\Omega^j_{m-1}(S')\otimes T^0_\mu} + \widetilde{\psi}_{C_m\otimes C_{m-1}}\sum_{\substack{1 \leq k<m-1\\ \sigma^j_k\sigma^j_{k+1}=-1}} \sigma^j_k (1-t) \widetilde{\psi}_{ \Omega^j_k(S')\otimes T^0_\mu}
			\\
			&= t \widetilde{\psi}_{C_m\otimes C_{m-1}} \widetilde{\psi}_{\Omega^j_{m-1}(S')\otimes T^0_\mu} + \widetilde{\psi}_{C_m\otimes C_{m-1}}\sum_{\substack{1 \leq k<m-1\\ \sigma^j_k\sigma^j_{k+1}=-1}} \sigma^j_k (1-t) \widetilde{\psi}_{ \Omega^j_k(S')\otimes T^0_\mu}
			\\
			&= \widetilde{\psi}_{C_m\otimes C_{m-1}} \Big(t\widetilde{\psi}_{\Omega^j_{m-1}(S')\otimes T^0_\mu} + \sum_{\substack{1 \leq k<m-1\\ \sigma^j_k\sigma^j_{k+1}=-1}}\sigma^j_k (1-t) \widetilde{\psi}_{ \Omega^j_k(S')\otimes T^0_\mu}\Big)
			\\
			&= \widetilde{\psi}_{C_m\otimes C_{m-1}} \widetilde{\psi}_{S'\otimes T^0_\mu}\,.
		\end{align*}

\end{proof}

\begin{remark}
	\eqref{eq:tildepsiTproduct} does not hold true if one replaces $\psi$ with $\Psi$. For example, if $n = 3$ then $\Psi_{T^0_{(3,2,0)}} = \one_{(3,2,0)} = 1$, and $\Psi_{T^0_{(2,2,0)}} = 1+T_1$, and $\Psi_{T^0_{(2,1,0)}} = 1$. So $$ \Psi_{\ytableaushort{111,22}} \neq \Psi_{\ytableaushort{11,22}}\cdot \Psi_{\ytableaushort{11,2}}\,. $$
\end{remark}

Comparing \autoref{prop:klostermannrecs} and \autoref{tildepsimainthm} we get
\begin{corollary}\label{tildepsi=psi}
	Let $T \in B(\varpi_{\ell_r})\otimes \ldots \otimes B(\varpi_{\ell_1})$ with $1 \leq \ell_1 \leq 
\ldots \leq \ell_r \leq n$. Then $$ \widetilde{\psi}_T = \psi_T\,.$$
\end{corollary}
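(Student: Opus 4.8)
The plan is to observe that \autoref{prop:klostermannrecs} and \autoref{tildepsimainthm} exhibit $\psi$ and $\widetilde{\psi}$ as solutions of the same system of recursions with identical initial data, and then to conclude $\widetilde{\psi}_T = \psi_T$ by induction. I would organize this as a two-stage argument: first prove $\widetilde{\psi}_{F\otimes E} = \psi_{F\otimes E}$ for all two-column tableaux, then bootstrap to arbitrary $T$ using the product formulas, which are part (1) of each theorem.

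For the two-column case I would induct on the lower column $E \in B(\varpi_a)$ with respect to the partial order $\leq$ from \eqref{eq:colposetdef}. The base case is $E = (1,\ldots,a)$, the minimal element of $B(\varpi_a)$: here parts (2.a) and (2.b) of both theorems coincide, giving $\psi_{F\otimes E} = \widetilde{\psi}_{F\otimes E} = 0$ when $F\otimes E$ is not semistandard and $=1$ when it is. For the inductive step, if $E \neq (1,\ldots,a)$ then there is some $j$ with $j \notin E$ and $j+1 \in E$, which by \autoref{lemma:sjEcomparison} is exactly the condition $s_jE < E$; in particular the hypotheses of both (2.c) recursions are satisfied for this choice of $j$.

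The heart of the matter is to check that the two (2.c) recursions agree term-by-term once the dictionary of \autoref{lemma:sjEcomparison} is used to translate the combinatorial conditions of \autoref{prop:klostermannrecs} (namely $j \in F, j+1 \notin F$ and its negation) into the poset conditions of \autoref{tildepsimainthm} (namely $s_jF \geq F$ and $s_jF \leq F$). I would split on the three possibilities for $s_jF$: if $s_jF > F$ both recursions read $t\,(\cdot)_{s_jF\otimes s_jE} + (1-t)\,(\cdot)_{F\otimes s_jE}$; if $s_jF < F$ both read $(\cdot)_{s_jF\otimes s_jE}$; and at the boundary $s_jF = F$, Klostermann's ``otherwise'' branch and either branch of the $\widetilde{\psi}$-recursion both collapse to the common value $(\cdot)_{F\otimes s_jE}$. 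In every appearing term the lower column is $s_jE < E$, so the inductive hypothesis applies and the two sides are equal.

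Finally, for a general $T = C_r \otimes \ldots \otimes C_1$, parts (1) of the two theorems give $\widetilde{\psi}_T = \prod_{j=1}^{r-1}\widetilde{\psi}_{C_{j+1}\otimes C_j} = \prod_{j=1}^{r-1}\psi_{C_{j+1}\otimes C_j} = \psi_T$, where the middle equality is the two-column case just established. The only genuine obstacle is the bookkeeping at the boundary case $s_jF = F$, where the two theorems partition the subcases differently; but since $tT_j^{-1} = T_j + 1 - t$ forces both descriptions to agree there (exactly as already noted in the proofs of \autoref{TXPsi} and \autoref{Psirecess2col}), this causes no difficulty, and the comparison is otherwise a direct matching of recursions.
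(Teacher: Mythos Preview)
Your proposal is correct and follows essentially the same approach as the paper: the paper's proof is the single sentence ``Comparing \autoref{prop:klostermannrecs} and \autoref{tildepsimainthm} we get \autoref{tildepsi=psi},'' and you have simply written out the induction that this comparison entails. Your handling of the boundary case $s_jF = F$, where the two theorems partition the subcases differently but agree on the common value, is exactly the consistency check the paper leaves implicit.
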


\section{Examples}

%


\subsection{$\Psi_T$ for $T \in B((3,2,0))$}
\ytableausetup{smalltableaux,centertableaux}
\[\]
Since $\one_{(3,2,0)} = 1$,
\begin{align*}
	\\
	&\Psi_{\ytableaushort{111,22}} = \one_{(3,2,0)} = 1\,,
	\\[1em]
	&\Psi_{\ytableaushort{111,23}} = T_2 \Psi_{\ytableaushort{111,32}} + (1-t) \Psi_{\ytableaushort{111,22}} = 0+(1-t)\one_{(3,2,0)} = (1-t)\,,
	\\[1em]
	&\Psi_{\ytableaushort{111,33}} = tT_2^{-1}\Psi_{\ytableaushort{111,22}} = tT_2^{-1}\one_{(3,2,0)} = tT_2^{-1}\,,
	\\[1em]
	&\Psi_{\ytableaushort{112,22}} = tT_1^{-1}\Psi_{\ytableaushort{111,22}} = tT_1^{-1}\one_{(3,2,0)} = tT_1^{-1}\,, 
	\\[1em]
	&\Psi_{\ytableaushort{112,23}} = tT_1^{-1}\Psi_{\ytableaushort{121,23}} - (1-t) \Psi_{\ytableaushort{121,23}} + (1-t) \Psi_{\ytableaushort{111,23}} \\&\qquad\qquad= 0 + 0 + (1-t)\cdot (1-t) \one_{(3,2,0)} = (1-t)^2\,,
	\\[1em]
	&\Psi_{\ytableaushort{112,33}} = T_1 \Psi_{\ytableaushort{221,33}} + (1-t)\Psi_{\ytableaushort{111,33}} = 0 + (1-t)tT_2^{-1}\one_{(3,2,0)} = (1-t)tT_2^{-1}\,,
	\\[1em]
	&\Psi_{\ytableaushort{113,22}} = T_2 \Psi_{\ytableaushort{112,33}} + (1-t)\Psi_{\ytableaushort{112,22}} \\&\qquad \qquad= T_2\cdot (1-t)tT_2^{-1} \one_{(3,2,0)} + (1-t)tT_1^{-1}\one_{(3,2,0)} \\&\qquad\qquad= t(1-t)\one_{(3,2,0)} + (1-t)tT_1^{-1}\one_{(3,2,0)} =  t(1-t) + (1-t)tT_1^{-1}\,,
	\\[1em]
	&\Psi_{\ytableaushort{113,23}} = T_2\Psi_{\ytableaushort{112,32}} + (1-t)\Psi_{\ytableaushort{112,22}} = 0 + (1-t)tT_1^{-1}\one_{(3,2,0)} = (1-t)tT_1^{-1}\,,
	\\[1em]
	&\Psi_{\ytableaushort{113,33}} = tT_2^{-1}\Psi_{\ytableaushort{112,22}} = tT_2^{-1}\cdot(1-t)tT_1^{-1}\one_{(3,2,0)} = (1-t)tT_2^{-1}tT_1^{-1}\,,
	\\[1em]
	&\Psi_{\ytableaushort{122,23}} = tT_1^{-1}\Psi_{\ytableaushort{111,23}} = tT_1^{-1}(1-t)tT_1^{-1}\one_{(3,2,0)} = (1-t)tT_1^{-1}tT_1^{-1}\,,
	\\[1em]
	&\Psi_{\ytableaushort{122,33}} = T_1\Psi_{\ytableaushort{211,33}} + (1-t)\Psi_{\ytableaushort{111,33}} = 0 + (1-t)tT_2^{-1}\one_{(3,2,0)} = (1-t)tT_2^{-1} \,,
	\\[1em]
	&\Psi_{\ytableaushort{123,23}} = T_2\Psi_{\ytableaushort{122,33}} + (1-t)\Psi_{\ytableaushort{122,23}} \\&\qquad\qquad= T_2\cdot (1-t)tT_2^{-1}\one_{(3,2,0)} + (1-t)\cdot tT_1^{-1}(1-t)tT_1^{-1}\one_{(3,2,0)} \\&\qquad\qquad= (1-t)t + (1-t)^2tT_1^{-1}tT_1^{-1}\,,
	\\[1em]
	&\Psi_{\ytableaushort{123,33}} = tT_2^{-1}\Psi_{\ytableaushort{122,23}} = tT_2^{-1}\cdot tT_1^{-1}(1-t)tT_1^{-1}\one_{(3,2,0)} = (1-t)tT_2^{-1} tT_1^{-1}tT_1^{-1}\,,
	\\[1em]
	&\Psi_{\ytableaushort{222,33}} = tT_1^{-1}\Psi_{\ytableaushort{111,33}} = tT_1^{-1}\cdot tT_2^{-1}\one_{(3,2,0)} = tT_1^{-1}tT_2^{-1}\,,
	\\[1em]
	&\Psi_{\ytableaushort{223,33}} = tT_2^{-1}\Psi_{\ytableaushort{222,33}} = tT_2^{-1}\cdot tT_1^{-1} tT_2^{-1}\one_{(3,2,0)} = tT_2^{-1} tT_1^{-1} tT_2^{-1}\,.
	\end{align*}
	
\subsection{$\psi_T$ for $T \in B((3,2,0))$}
\[\]
Using $\psi_T\one_0 = \dfrac{1}{W_\lambda(t)}\Psi_T \one_0$,  
\ytableausetup{smalltableaux,centertableaux}
\begin{align*}
	\\
	&\psi_{\ytableaushort{111,22}} = 1\,,
	&\psi_{\ytableaushort{111,23}} = (1-t)\,,
	\\[1em]
	&\psi_{\ytableaushort{111,33}} =  1\,,
	&\psi_{\ytableaushort{112,22}} = 1\,, 
	\\[1em]
	&\psi_{\ytableaushort{112,23}} = (1-t)^2\,,
	&\psi_{\ytableaushort{112,33}} = (1-t)\,,
	\\[1em]
	&\psi_{\ytableaushort{113,22}} = t(1-t) + (1-t) = (1-t^2)\,,
	&\psi_{\ytableaushort{113,23}} =  (1-t)\,,
	\\[1em]
	&\psi_{\ytableaushort{113,33}} =  (1-t)\,,
	&\psi_{\ytableaushort{122,23}} =  (1-t)\,,
	\\[1em]
	&\psi_{\ytableaushort{122,33}} =  (1-t)\,,
	&\psi_{\ytableaushort{123,23}} = t (1-t) + (1-t)^2 = (1-t)\,,
	\\[1em]
	&\psi_{\ytableaushort{123,33}} = (1-t)\,,
	&\psi_{\ytableaushort{222,33}} =  1\,,
	\\[1em]
	&\psi_{\ytableaushort{223,33}} = 1\,.
\end{align*}

%


\subsection{Example of \eqref{eq:tildepsiTproduct}}

Suppose $T = C_r \otimes \ldots \otimes C_1$. If $T$ is not semistandard then $\widetilde{\psi}_T = 0$ and atleast one of $C_{i+1}\otimes C_i$ is not semistandard, so $\widetilde{\psi}_{C_{i+1}\otimes C_i} = 0$. Hence $\widetilde{\psi}_T = \prod\limits_{i=1}^{r-1} \widetilde{\psi}_{C_{i+1} \otimes C_i}$. 

Since $\widetilde{\psi}_{\ytableaushort{111,22}} = 1 = \widetilde{\psi}_{\ytableaushort{11,22}} \widetilde{\psi}_{\ytableaushort{11,2}}$, using \eqref{eq:tildepsirec2cols},

\begin{align*}
	\widetilde{\psi}_{\ytableaushort{111,23}} &= t\widetilde{\psi}_{\ytableaushort{111,32}} + (1-t) \widetilde{\psi}_{\ytableaushort{111,22}} = t\widetilde{\psi}_{\ytableaushort{11,32}}\widetilde{\psi}_{\ytableaushort{11,2}} + (1-t) \widetilde{\psi}_{\ytableaushort{11,22}}\widetilde{\psi}_{\ytableaushort{11,2}}
	\\
	&=\Big( t\widetilde{\psi}_{\ytableaushort{11,32}} + (1-t) \widetilde{\psi}_{\ytableaushort{11,22}} \Big) \widetilde{\psi}_{\ytableaushort{11,2}} = \widetilde{\psi}_{\ytableaushort{11,23}} \widetilde{\psi}_{\ytableaushort{11,2}}\,.
\end{align*}

Similarly, using $\widetilde{\psi}_{\ytableaushort{12,23}} = \widetilde{\psi}_{\ytableaushort{11,23}}$\,, and $\widetilde{\psi}_{\ytableaushort{12,3}} = t \widetilde{\psi}_{\ytableaushort{21,3}} + (1-t)\widetilde{\psi}_{\ytableaushort{11,3}}$\,, \begin{align*}
	\widetilde{\psi}_{\ytableaushort{112,23}} &= \widetilde{\psi}_{\ytableaushort{121,23}} - (1-t) \widetilde{\psi}_{\ytableaushort{121,23}} + \widetilde{\psi}_{\ytableaushort{111,23}} 
	\\
	&= \widetilde{\psi}_{\ytableaushort{12,23}} \widetilde{\psi}_{\ytableaushort{21,3}} - (1-t)\widetilde{\psi}_{\ytableaushort{12,23}} \widetilde{\psi}_{\ytableaushort{21,3}} + (1-t) \widetilde{\psi}_{\ytableaushort{11,23}}\widetilde{\psi}_{\ytableaushort{11,3}}
	\\
	&= \widetilde{\psi}_{\ytableaushort{12,23}} \Big( t \widetilde{\psi}_{\ytableaushort{21,3}} + (1-t)\widetilde{\psi}_{\ytableaushort{11,3}}  \Big) = \widetilde{\psi}_{\ytableaushort{11,23}} \widetilde{\psi}_{\ytableaushort{12,3}}\,.
\end{align*}

\bibliographystyle{alpha}
\bibliography{Klostermann_arxiv}

\end{document}